\documentclass{amsart}
\usepackage{amsmath,amsthm}
\usepackage{amsfonts,amssymb}
\usepackage{mathrsfs}
\usepackage{amsbsy}

\usepackage{accents}
\usepackage{enumerate}
\usepackage{accents,color}
\usepackage{graphicx}

\hfuzz1pc

\addtolength{\textwidth}{0.5cm}

\newcommand{\lvt}{\left|\kern-1.35pt\left|\kern-1.3pt\left|}
\newcommand{\rvt}{\right|\kern-1.3pt\right|\kern-1.35pt\right|}

\newtheorem{thm}{Theorem}[section]
\newtheorem{cor}[thm]{Corollary}
\newtheorem{lem}[thm]{Lemma}
\newtheorem{prop}[thm]{Proposition}

\newtheorem{defn}[thm]{Definition}

\theoremstyle{remark}

 \def\la{{\langle}}
 \def\ra{{\rangle}}
 \def\ve{{\varepsilon}}

 \def\d{\mathrm{d}}
 
 \def\sph{{\mathbb{S}^{d-1}}}

 \def\sP{{\mathsf P}}
 \def\sQ{{\mathsf Q}}
 
 \def\sT{{\mathsf T}}
 
 \def\bs{{\mathsf b}}
 \def\sc{{\mathsf c}}
 \def\sd{{\mathsf d}}
 \def\sm{{\mathsf m}}
 \def\sw{{\mathsf w}}

 \def\a{{\alpha}}
 \def\b{{\beta}}
 \def\g{{\gamma}}
 
 \def\t{{\theta}}
 \def\l{{\lambda}}
 
 \def\s{\sigma}
 \def\la{{\langle}}
 \def\ra{{\rangle}}
 \def\ve{{\varepsilon}}
 
 \def\bb{{\mathbf b}}
 \def\cb{{\mathbf c}}

 \def\Mb{{\mathbf M}}
 \def\Pb{{\mathbf P}}
 \def\Qb{{\mathbf Q}}
 
 \def\Tb{{\mathbf T}}

 \def\CV{{\mathcal V}}

 \def\BB{{\mathbb B}}

 \def\NN{{\mathbb N}}

 \def\RR{{\mathbb R}}

 \def\VV{{\mathbb V}}

  \def\CM{{\mathscr M}}
  
\DeclareMathAlphabet{\mathscrbf}{OMS}{mdugm}{b}{n}
\def\bCM{{\mathscrbf}}
 
\def\proj{\operatorname{proj}}

\def\lla{\langle{\kern-2.5pt}\langle}      
\def\rra{\rangle{\kern-2.5pt}\rangle}

\DeclareMathAlphabet{\mathscrbf}{OMS}{mdugm}{b}{n}
\def\bCM{{\mathscrbf{M}}}

\def\f{\frac}

\graphicspath{{./}}
\begin{document}
 
\title[Maximal functions and multiplier theorem on conic domains]
{Maximal functions and multiplier theorem for Fourier orthogonal series}

\author{Yuan~Xu}
\address{Department of Mathematics, University of Oregon, Eugene, 
OR 97403--1222, USA}
\email{yuan@uoregon.edu} 
\thanks{The author is partially supported by Simons Foundation Grant \#849676.}

\date{\today}  
\subjclass[2010]{41A10, 41A63, 42C10, 42C40}
\keywords{Fourier orthogonal series, maximal function, multiplier theorem, conic domain}

\begin{abstract} 
Under the assumption that orthogonal polynomials of several variables admit an addition formula, we can define 
a convolution structure and use it to study the Fourier orthogonal expansions on a homogeneous space. We 
define a maximal function via the convolution structure induced by the addition formula and use it to establish 
a Marcinkiewicz multiplier theorem. For the homogeneous space defined by a family of weight functions 
on conic domains, we show that the maximal function is bounded by the Hardy-Littlewood maximal function so 
that the multiplier theorem holds on conic domains.
\end{abstract}
 
\maketitle
  
\section{Introduction}
\setcounter{equation}{0}

Fourier orthogonal expansions on a domain in $\RR^d$ have been studied in depth for several regular 
domains in high dimensions in recent years. Let $\Omega$ be either a hypersurface or a regular domain 
with open interior in $\RR^d$ and let $\sw$ be a weight function on $\Omega$. We consider the orthogonal 
structure with respect to the inner product 
\begin{equation}\label{eq:ipd-w}
        \la f, g\ra_\sw = \int_{\Omega} f(x) g(x) \sw(w) \d x,
\end{equation}
well-defined for the space $\Pi^d(\Omega)$ of polynomials polynomials restricted on $\Omega$. 
Let $\CV_n^d$ be the subspace of $\Pi^d(\Omega)$ that consists of orthogonal polynomials of degree 
$n$ with respect to the inner product. With respect to an orthogonal basis $\{P_k^n: 1 \le k \le \dim \CV_n^d\}$
of $\CV_n^d$, the Fourier orthogonal expansion of $f\in L^2(\Omega, \sw)$ is defined by
\begin{equation}\label{eq:FourierOP}
  f = \sum_{n=0}^\infty \proj_n(\sw; f), \qquad \proj_n (\sw;f) =
   \sum_{k=0}^n \sum_{j=1}^{\dim \CV_n^d} \f{\la f, P_k^n\ra_\sw}{\la P_k^n, P_k^n\ra_\sw} P_k^n. 
\end{equation}
The orthogonal projection operator $\proj_n: L^2(\Omega,\sw) \mapsto \CV_n^d$ can be written as
\begin{equation}\label{eq:proj}
  \proj_n(\sw; f, x) = \int_\Omega f(y) P_n(\sw; x ,y) \sw(y) \d y, 
\end{equation}
where $P_n(\sw; \cdot,\cdot)$ is the reproducing kernel of $\CV_n^d$ and it is given by 
\begin{equation}\label{eq:Pn-kernel}
P_n(\sw; x,y):=\sum_{j=1}^{\dim \CV_n^d} \f{ P_k^n(x)P_k^n(y)}{\la P_k^n, P_k^n\ra_\sw}. 
\end{equation}
This kernel is uniquely determined and is independent of the choice of orthogonal bases. To develop an 
analysis of orthogonal expansions beyond the standard $L^2$ theory, we need further information on the 
orthogonal structure that can hold only for special $\sw$ and $\Omega$. The first domain on which a 
systematic study is carried out is the spherical harmonic expansion on the unit sphere $\sph$; see, for example, 
\cite{BBP, BC, DaiX, Rus, Stein} and references in \cite{DaiX, Stein}. The approach developed on the sphere
is extended to study the Fourier-orthogonal expansions on several other regular domains, including the unit 
sphere with product type weight functions, the unit ball and the regular simplex with classical weight functions; 
see, for example, \cite{DaiX, DX} and references there as well as \cite{KPX, KPX2, NSS, PX2}. More recently, 
they are studied for Jacobi type weight functions on conic domains in \cite{X20a, X20b, X21}. 
  
A common trait that makes the study on these regular domains possible is the {\it addition formula} for the 
orthogonal polynomials, which gives a closed-form formula for the reproducing kernels. The simplest case 
is the spherical harmonic expansions on the unit sphere $\sph$, where the reproducing kernel satisfies
\begin{equation}\label{eq:additionS}
  P_n(x,y) = \frac{n+\l}{\l}C_n^\l(x,y), \qquad \l = \f{d-2}{2}, \quad x,y \in \sph,
\end{equation}
in which $C_n^\l$ is the Gegenbauer polynomial of degree $n$. This identity is the classical addition formula 
for spherical harmonics, since it gives a closed-form formula for the sum in \eqref{eq:Pn-kernel}. It is also 
the reason why we retain the name addition formula for closed-formula for other orthogonal polynomials. The 
remarkable formula \eqref{eq:additionS} allows us to relate much of the 
study on the unit sphere to the Fourier-Gegenbauer series in one variable (cf. \cite{BC, DaiX, Stein} 
and references there). Analysis on other regular domains also relies on their addition formulas, which are 
however more complex and given by integrals of the Gegenbauer or the Jacobi polynomials.  Recently, 
motivated by newly discovered addition formulas for the conic domains in \cite{X20a, X20b}, we have shown
in \cite{X21} that a substantial portion of analysis holds in a general setting, under a mere assumption that an 
addition formula for the reproducing kernel exists. The main result of the present paper is to show that this 
paradigm applies to the maximal functions and multiplier operators. 

We will assume that the orthogonal structure of $L^2(\Omega,\sw)$ admits an addition formula given 
as an integral of a Jacobi polynomial (see Definition \ref{defn:additionF} below). It leads to a pseudo 
convolution structure and a maximal function that is suitable for studying the Fourier orthogonal expansions. 
Under a technical assumption, we can then prove a Marcinkiewicz multiplier theorem, following the original 
approach in \cite{BC} for the unit sphere closely. We then show that the general setting applies to the conic surface 
$$
   \VV_0^{d+1} = \left \{ (x,t) \in \RR^{d+1}: \|x\| \le t, \, x \in \BB^d, \, 0 \le t \le 1 \right\}
$$
in $\RR^{d+1}$ with the weight function $t^{-1} (1-t)^\g$, $\g \ge 0$, and the cone bounded by $\VV_0^{d+1}$ 
and the hyperplane $t=1$ with the weight function $(t^2-\|x\|^2)^{\mu-\f12}(1-t)^\g$, $\mu, \g \ge 0$. For these 
conic domains, we shall show that the maximal function defined via the pseudo convolution is bounded 
by the Hardy-Littlewood maximal function. The latter is sufficient for proving the multiplier theorem on the 
conic domains. 

The paper is organized as follows. The theory for the general setting is developed in the next section. 
The conic surface is treated in Section 3 and the cone in Section 4. Throughout this paper, we shall use
$c, c_1, c_2$ to denote positive constants that depend only on fixed parameters, and their values may 
change from line to line. Furthermore, we write $A\sim B$ if $c_1 A \le B\le c_2 A$.

\section{Homogenous space admitting an addition formula} 

In this section, we consider analysis on a homogeneous space that admits an addition formula. The
first subsection is preliminary, where we give basic definitions that will be used throughout the rest
of the paper. The addition formula and the convolution structure are defined and discussed in the 
second subsection. The Poisson integrals and the Ces\`aro means are discussed in subsections three
and four, respectively. The latter contains an assertion that will be needed for the proof of the multiplier 
theorem in subsection five. Finally, a maximal function is defined and discussed in subsection six. 

\subsection{Preliminary} 
Let $\Omega$ be a domain in $\RR^d$ with a metric $\sd$ and let $\sw$ be a nonnegative weight function 
on $\Omega$. The space $(\Omega,\sw,\sd)$ is called a homogeneous space if all open balls 
$B(x,r) = \{y\in \Omega: \sd(x,y) < r\}$  are measurable and $\sw$ is a doubling weight. The latter means 
that $\sw(B(x,2r)) \le c\, \sw (B(x,r))$ for all $x \in \Omega$ and $r > 0$, where $c$ is dependent of $x$ and $r$,
and
$$
       \sw (E) := \int_E \sw(x) \d \sm(x), \qquad E \subset \Omega,
$$
where $\sm$ is the Lebesgue measure on $\Omega$. For our purpose, the domain $\Omega$ is either a 
quadratic surface, such as the unit sphere or a conic surface, or a domain bounded by such a surface and, 
when needed, hyperplanes. Moreover, our weight function will also be specified. 

Let $\Pi(\Omega)$ denote the space of polynomials restricted to $\Omega$ and let $\Pi_n(\Omega)$ denote 
its subspace of polynomials of degree at most $n$. If the interior of $\Omega$ is open, then $\Pi(\Omega)$ 
contains all polynomials of degree $n$ in $d$ variables and we write $\Pi_n = \Pi_n(\Omega)$. If $\Omega$ 
is an algebraic surface, such as the sphere $\sph$, then $\Pi_n(\Omega)$ contains all polynomials restricted 
to the surface. 

Let $\sw$ be a doubling weight on $\Omega$ normalized so that $\int_\Omega \sw(x) \d \sm =1$. We consider 
orthogonal polynomials with respect to the inner product $\la \cdot,\cdot\ra_\sw$ defined in \eqref{eq:ipd-w}.
A polynomials $p_n \in \Pi_n(\Omega)$ is an orthogonal polynomial if $\la p_n, q\ra_\sw =0$ for all 
$q\in \Pi_{n-1}(\Omega)$. 
$\CV_n(\Omega,\sw)$ be the space of orthogonal polynomials of degree $n$. If $\Omega$ is a quadratic 
surface, such as the unit sphere $\sph$, then 
\begin{equation*} 
  \dim \CV_n(\Omega,\sw) = \binom{n+d-2}{n} + \binom{n+d-3}{n-1}, \quad n = 1,2,3,\ldots, 
\end{equation*}
where we assume $\binom{n}{k} =0$ if $k < 0$. If $\Omega$ is a solid domain, such as the unit ball $\BB^d$,
then 
\begin{equation*} 
   \dim \CV_n(\Omega,\sw) = \binom{n+d-1}{n}, \quad n = 0, 1,2,\ldots ,
\end{equation*}
The Fourier orthogonal expansion of $f \in L^2(\Omega,\sw)$ is defined as in \eqref{eq:FourierOP} and the
reproducing kernel $P_n(\sw; \cdot, \cdot)$ of $\CV_n(\Omega,\sw)$ is given by \eqref{eq:proj} and 
\eqref{eq:Pn-kernel}. 

We need to consider specific domain $\Omega$ and weight function $\sw$. Our quintessential example is the 
homogeneous space on the unit sphere with respect to the surface measure, for which orthogonal polynomials 
are spherical harmonics. As an extension of the addition formula of spherical harmonics given in 
\eqref{eq:additionS}, we shall assume that that the homogeneous space $(\Omega, \sw, \sd)$ admits a 
closed formula given as an integral of the Jacobi polynomial $P_n^{(\a,\b)}$. The precise definition is 
given in the next subsection. First, we recall the definition of the Jacobi polynomials. These are orthogonal 
polynomials associated with the weight function 
$$
       w_{\a,\b}(t) = (1-t)^\a (1+t)^\b, \qquad \a,\b > -1. 
$$ 
The polynomial $P_n^{(\a,\b)}$ is of degree $n$ and normalized so that $P_n^{(\a,\b)}(1) = \binom{n+\a}{n}$. 
With 
\begin{equation}\label{eq:c_ab}
 c'_{\a,\b} = \frac{1}{2^{\a+\b+1}} c_{\a,\b} \quad\hbox{and} \quad 
   c_{\a,\b} := \frac{\Gamma(\a+\b+2)}{\Gamma(\a+1)\Gamma(\b+1)}.
\end{equation}
the orthogonality of the Jacobi polynomials is given by 
$$
   c_{\a,\b}' \int_{-1}^1 P_n^{(\a,\b)}(t)P_m^{(\a,\b)}(t) w_{\a,\b}(t) \d t = h_n^{(\a,\b)} \delta_{n,m},
$$
where $h_n^{(\a,\b)}$ is the square of the $L^2$ norm that satisfies
$$
  h_n^{(\a,\b)} =  \frac{(\a+1)_n (\b+1)_n(\a+\b+n+1)}{n!(\a+\b+2)_n(\a+\b+2 n+1)}.
$$
For convenience, we introduce the following notation 
\begin{equation}\label{eq:Znab}
         Z_n^{(\a,\b)}(t) := \frac{P_n^{(\a,\b)}(1) P_n^{(\a,\b)}(t)}{h_n^{(\a,\b)}}. 
\end{equation}
The Gegenbauer polynomials $C_n^\l$ are orthogonal with respect to the weight function 
$$
       w_{\l}(t) = (1-t^2)^{\l-\f12},  \qquad \l > -\f12, 
$$ 
which is a special case of the Jacobi polynomials since $w_\l(t) = w_{\l-\f12,\l-\f12}(t)$, but it is normalized 
so that $C_n^\l(1) = \frac{(2\l)_n}{n!}$; more precisely \cite[(4.7.1)]{Sz},
$$
      C_n^\l(t) = \frac{(2\l)_n}{(\l+\f12)_n} P_n^{(\l-\f12,\l-\f12)}(t). 
$$
The orthogonality of the Gegenbauer polynomials is given explicitly by 
$$
   c_{\l} \int_{-1}^1 C_n^{\l}(t)C_m^{\l}(t) w_\l(t) \d t = h_n^{\l} \delta_{n,m}, \qquad 
           h_n^\l = \frac{\l}{n+\l} C_n^\l(1),
$$
where $c_\l = c_{\l-\f12,\l-\f12}$. In particular, it follows readily that 
$$
  Z_n^\l(t):= \frac{n+\l}{\l} C_n^\l(t) = Z_n^{(\l-\f12,\l-\f12)}(t). 
$$

\subsection{Addition formula and convolution structure}
We work in the setting that $\Omega$ is a compact domain and the weight function $\sw$ on $\Omega$ admits 
an addition formula for the the reproducing kernel $P_n(\sw; \cdot,\cdot)$. The addition formula and the convolution 
structure it leads to are defined in \cite{X21}, which we review in this subsection. The following definition appears
in \cite[Definition 3.2]{X21}, in which $Z_n^{(\a,\b)}$ is defined in \eqref{eq:Znab}. 

\begin{defn}\label{defn:additionF}
Let $\sw$ be a weight function on $\Omega$. The reproducing kernel $P_n(\sw; \cdot,\cdot)$ 
is said to satisfy an addition formula if, for some $\a \ge \b \ge - \f12$,
\begin{equation} \label{eq:7Pn}
   P_n (\sw; x, y) = \int_{[-1,1]^m} Z_n^{(\a,\b)} \big(\xi(x, y; u) \big) \d \tau (u), 
\end{equation}
where $m$ is a positive integer; $\xi(x, y; u)$ is a function of $u \in [-1,1]^m$, symmetric 
in $x$ and $y$, and $\xi(x, y; u) \in [-1,1]$; moreover, $\d \tau$ is a probability measure on 
$[-1,1]^m$, which can degenerate to have a finite support. 
\end{defn}

The classical addition formula for spherical harmonics is the degenerate case. The addition formula for
product type weight functions on the unit sphere and the Gegenbauer weight function on the unit ball is 
given in terms of $Z_n^\l$ \cite{DX, X99}, while those on the simplex and on the conic domains are given 
in terms of $Z_n^{(\l-\f12,-\f12)}$ \cite{X20a, X20b}. 

For $1 \le p \le \infty$, we denote by $\|f\|_{p,\sw}$ the $L^p$ norm of $L^p(\Omega, \sw)$ for
$1 \le p < \infty$ and the uniform norm of $C(\Omega)$ for $p = \infty$. We also identify $L^p([-1,1],w_{\a,\b})$
with $C([-1,1])$ when $p = \infty$. 

Making use of the one-dimensional structure premised by the addition formula, a convolution operator is
defined in \cite[Definition 3.3]{X21}. 

\begin{defn}\label{defn:7convol}
Assume $\sw$ admits the addition formula. For $f \in L^1(\Omega, \sw)$ and 
$g \in L^1([-1,1],w_{\a,\b})$, we define the convolution of $f$ and $g$ by 
$$
  (f \ast_\sw  g)(x) :=   \int_{\Omega} f(y) T^{(\a,\b)} g (x,y) \sw(y) \d \sm(y),
$$
where the operator $g\mapsto T^{(\a,\b)} g$ is defined by 
\begin{align*} 
   T^{(\a,\b)} g(x,y) := \int_{[-1,1]^m}  g \big( \xi (x, y; u)\big)  \d \tau(u).
\end{align*}
\end{defn}

The definition of this convolution structure is motivated by the expression of the reproducing kernel specified 
by the addition formula, so that we can write 
\begin{equation}\label{eq:proj=f*g}
     \proj_n(\sw; f, x) = f*_\sw Z_n^{(\a,\b)} \quad\hbox{and}\quad P_n(\sw; x,y) = T^{(\a,\b)} \left( Z_n^{(\a,\b)}\right).
\end{equation}
The operator $T^{(\a,\b)}$ is also defined with a more generic orthogonal polynomial in place of 
$P_n^{(\a,\b)}$ in \cite{X20b}, where it is shown to be bounded \cite[Lemma 6.3]{X20b}. 


\begin{lem}
For $g \in L^p([-1,1],w_{\a,\b})$, $1\le p \le \infty$, and $x \in \Omega$, 
\begin{equation*}
  \left \| T^{(\a,\b)} g (x, \cdot)\right\|_{\sw, p} \le \|g \|_{L^p([-1,1],w_{\a,\b})}.
\end{equation*}
\end{lem}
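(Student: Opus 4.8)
The plan is to bound the $L^p(\Omega,\sw)$ norm of $T^{(\a,\b)}g(x,\cdot)$ by moving the $L^p$ norm inside the defining integral over $[-1,1]^m$ and then exploiting that $u\mapsto\xi(x,y;u)$ lands in $[-1,1]$ together with the fact that $\d\tau$ is a probability measure. Concretely, writing
\[
  T^{(\a,\b)}g(x,y) = \int_{[-1,1]^m} g\big(\xi(x,y;u)\big)\,\d\tau(u),
\]
I would first treat $1\le p<\infty$. Since $\d\tau$ is a probability measure, Jensen's inequality (equivalently Minkowski's integral inequality in the form $\|\int F\,\d\tau\|_p \le \int \|F\|_p\,\d\tau$) gives
\[
  \big|T^{(\a,\b)}g(x,y)\big|^p \le \int_{[-1,1]^m} \big|g\big(\xi(x,y;u)\big)\big|^p\,\d\tau(u).
\]
Integrating this against $\sw(y)\,\d\sm(y)$ over $\Omega$ and applying Fubini to exchange the order of the $y$-integration and the $u$-integration, the claim reduces to showing that for each fixed $u$,
\[
  \int_\Omega \big|g\big(\xi(x,y;u)\big)\big|^p\,\sw(y)\,\d\sm(y) \le \|g\|_{L^p([-1,1],w_{\a,\b})}^p.
\]

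The substance of the argument, and the step I expect to be the main obstacle, is this last inequality: it is a change-of-variables / pushforward statement asserting that, for fixed $x$ and $u$, the image measure of $\sw(y)\,\d\sm(y)$ under the map $y\mapsto \xi(x,y;u)$ is dominated by the Jacobi measure $w_{\a,\b}(s)\,\d s$ on $[-1,1]$. In the setting of \cite{X21} this compatibility between the geometric weight $\sw$ and the one–dimensional Jacobi weight is exactly what the addition formula encodes, so I would appeal to the structural properties of $\xi$ and $\d\tau$ established there rather than attempt a bare-hands computation; the honest content is that the convolution is built precisely so that this normalization holds, reflecting that $\proj_n(\sw;f)=f\ast_\sw Z_n^{(\a,\b)}$ in \eqref{eq:proj=f*g}. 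Assuming this pushforward bound, the $y$-integral is bounded by $\|g\|_{L^p([-1,1],w_{\a,\b})}^p$ uniformly in $u$, and integrating the resulting constant against the probability measure $\d\tau$ leaves the bound unchanged, yielding
\[
  \big\|T^{(\a,\b)}g(x,\cdot)\big\|_{\sw,p}^p \le \|g\|_{L^p([-1,1],w_{\a,\b})}^p.
\]

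Finally, the endpoint $p=\infty$ is immediate and handled separately: since $\xi(x,y;u)\in[-1,1]$ and $\d\tau$ is a probability measure,
\[
  \big|T^{(\a,\b)}g(x,y)\big| \le \int_{[-1,1]^m}\big|g\big(\xi(x,y;u)\big)\big|\,\d\tau(u) \le \|g\|_{L^\infty([-1,1])},
\]
so that $\|T^{(\a,\b)}g(x,\cdot)\|_{\sw,\infty}\le\|g\|_\infty$, using the identification of $L^\infty([-1,1],w_{\a,\b})$ with $C([-1,1])$ noted in the text. Since the bound is uniform in $x$ and $y$, the proof is complete. I would remark that this is the contraction property recorded as \cite[Lemma 6.3]{X20b}, and the only delicate point is the measure-pushforward identity, which rests entirely on the hypotheses packaged into Definition \ref{defn:additionF}.
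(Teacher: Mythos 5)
Your overall skeleton (Jensen's inequality with the probability measure $\d\tau$ to get $|T^{(\a,\b)}g|^p \le T^{(\a,\b)}(|g|^p)$, then integration over $\Omega$ and Fubini) is the right one, and the $p=\infty$ endpoint is fine. But the step you yourself flag as the main obstacle is mis-stated, and as stated it is false: for a \emph{fixed} $u$, the image of $\sw(y)\,\d\sm(y)$ under $y\mapsto\xi(x,y;u)$ need not be dominated by $c'_{\a,\b}w_{\a,\b}(t)\,\d t$. Definition \ref{defn:additionF} constrains only the \emph{average} over $u$; nothing prevents $\xi(x,\cdot\,;u)$ from being essentially constant for particular values of $u$. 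Concretely, on the conic surface the relevant map is $(y,s)\mapsto 2\zeta(x,t,y,s;v)^2-1$ with $\zeta(x,t,y,s;v)=v_1\sqrt{(st+\la x,y\ra)/2}+v_2\sqrt{1-t}\sqrt{1-s}$; at $v=(0,0)$ every $(y,s)$ is sent to $-1$, so the pushforward is a unit point mass at $-1$. Taking $g$ to be an $L^p(w_{\a,\b})$-normalized bump supported on $[-1,-1+\delta]$ then makes $\int_\Omega |g(\xi(x,y;v))|^p\sw(y)\,\d\sm(y)$ blow up as $\delta\to0$ while $\|g\|_{L^p([-1,1],w_{\a,\b})}$ stays bounded, so the inequality you reduce to cannot hold uniformly, or even $\tau$-almost everywhere, in $u$.

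The repair is to keep the $u$-integration and prove the exact identity
$\int_\Omega T^{(\a,\b)}h(x,y)\,\sw(y)\,\d\sm(y)=c'_{\a,\b}\int_{-1}^1 h(t)\,w_{\a,\b}(t)\,\d t$
for $h\ge 0$. Both sides are positive linear functionals of $h$, and they agree on $h=Z_n^{(\a,\b)}$ for every $n$: by \eqref{eq:proj=f*g} and the reproducing property the left-hand side equals $\int_\Omega P_n(\sw;x,y)\sw(y)\,\d\sm(y)=\delta_{n,0}$ (using the normalization $\int_\Omega\sw\,\d\sm=1$), while the right-hand side equals $\delta_{n,0}$ by the orthogonality of the Jacobi polynomials. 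Since the $Z_n^{(\a,\b)}$ span the polynomials, which are dense in $C([-1,1])$, the two positive functionals coincide. Applying this identity with $h=|g|^p$ after your Jensen step gives equality rather than a mere bound, and the lemma follows; for $p=1$ one uses $|T^{(\a,\b)}g|\le T^{(\a,\b)}|g|$ directly. This is the argument behind \cite[Lemma 6.3]{X20b}, which is all the paper itself invokes here, so there is no internal proof to compare against; but your fixed-$u$ domination claim is the one step that genuinely fails and must be replaced by the averaged identity above.
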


The convolution operator $f \ast_\sw g$ satisfies the usual Young's inequality. 
Let $p,q,r \ge 1$ and $p^{-1} = r^{-1}+q^{-1}-1$. For $f \in L^q(\Omega, \sw)$ and
$g \in L^r([-1,1]; w_{\a,\b})$, 
\begin{equation} \label{eq:Young}
  \|f \ast_\sw g\|_{L^p (\Omega, \sw)} \le \|f\|_{L^q(\Omega,\sw)}\|g\|_{L^r( [-1,1];\sw)}.
\end{equation}

There is another way to define the convolution structure that uses a sort of translation operator defined 
as in \cite[Definition 3.7]{X21}. 

\begin{defn}
Assume the addition formula holds for the weight function $\sw$. For $0\le \t \le \pi$, the translation 
operator $S_{\t,\sw}$ is defined by 
\begin{equation}\label{eq:Stheta}
    \proj_n (\sw; S_{\t, \sw} f) = \frac{P_n^{(\a,\b)}(\cos \t)}{P_n^{(\a,\b)}(1)} \proj_n(\sw; f), 
      \quad n = 0,1,2,\ldots.
\end{equation}
\end{defn}

Since $|P_n^{(\a,\b)}(\cos \t)| \le |P_n^{(\a,\b)}(1)|$, the operator $S_{\t,\sw} f$ is well defined for
$f\in L^2(\Omega, \sw)$ and, by density, for all $f\in L^1(\Omega, \sw)$. The following proposition
is established in \cite[Proposition 3.8]{X21}. 

\begin{prop}\label{prop:Stheta}
The operator $S_{\t,\sw}$ satisfies the following properties:
\begin{enumerate}[    \rm (i)]
\item For $f \in L^2(\Omega, \sw)$ and $g \in L^1([-1,1], w_{\a,\b})$, 
$$
   (f*_\sw g)(x) = c_{\a,\b} \int_0^\pi S_{\t, \sw} f(x) g(\cos\t) w_{\a,\b}(\cos \t) \sin \t \d \t. 
$$
\item $S_{\t, \sw} f$ preserves positivity; that is, $S_{\t, \sw}  f \ge 0$ if $f \ge 0$. 
\item For $f\in L^p(\sw; \Omega)$, if $1 \le p \le \infty$, or $f \in C(\Omega)$ if $p =\infty$, 
$$
  \|S_{\t, \sw} f \|_{p,\sw} \le \|f\|_{p, \sw} \quad \hbox{and} \quad \lim_{\t\to 0} \|S_{\t, \sw}  f - f\|_{\sw,p} =0.
$$
\end{enumerate}
\end{prop}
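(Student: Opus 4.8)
The plan is to prove all three parts by reducing everything to the action of $S_{\t,\sw}$ on the orthogonal decomposition $f\sim\sum_n\proj_n(\sw;f)$, where it acts as the diagonal multiplier $m_n(\t):=P_n^{(\a,\b)}(\cos\t)/P_n^{(\a,\b)}(1)$ with $|m_n(\t)|\le 1$. For part (i), I would compute the $\CV_n(\Omega,\sw)$-projections of each side. Expanding $g$ in Jacobi polynomials with coefficients $\hat g_n=c_{\a,\b}\int_{-1}^1 g(s)P_n^{(\a,\b)}(s)w_{\a,\b}(s)\,\d s$ and using the addition formula \eqref{eq:7Pn} to evaluate $T^{(\a,\b)}P_n^{(\a,\b)}$ in terms of $P_n(\sw;\cdot,\cdot)$, one finds $\proj_n(\sw;f*_\sw g)=\hat g_n\,P_n^{(\a,\b)}(1)^{-1}\proj_n(\sw;f)$. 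On the other hand, projecting the right-hand side and substituting $s=\cos\t$ collapses the angular integral into exactly $\hat g_n$, so the two sides carry identical projections in every $\CV_n(\Omega,\sw)$. Since for $f\in L^2$ and $g\in L^1([-1,1],w_{\a,\b})$ both sides lie in $L^2(\Omega,\sw)$ — the left by Young's inequality \eqref{eq:Young}, the right as a Bochner integral of the $L^2$-bounded family $\t\mapsto S_{\t,\sw}f$ — equality of all projections forces equality. I would make this rigorous by first taking $g$ a polynomial (finite sums, no convergence issue) and then passing to general $g$ by density, controlling both sides through \eqref{eq:Young} and the uniform bound $\|S_{\t,\sw}f\|_{2,\sw}\le\|f\|_{2,\sw}$.

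The crux is part (ii), and I expect the passage from ``nonnegative for almost every $\t$'' to ``nonnegative for every $\t$'' to be the main obstacle. The cheap half is immediate from (i): since $\tau$ is a probability measure, $T^{(\a,\b)}g\ge 0$ whenever $g\ge 0$, hence $f*_\sw g\ge 0$ for $f,g\ge 0$; feeding this into (i) gives $\int_0^\pi S_{\t,\sw}f(x)\,g(\cos\t)w_{\a,\b}(\cos\t)\sin\t\,\d\t\ge 0$ for all $g\ge 0$, which forces $S_{\t,\sw}f(x)\ge 0$ for almost every $\t$. To upgrade this to every $\t$ I would exploit the one estimate available before positivity is known, namely the $L^2$ contraction $\|S_{\t,\sw}f\|_{2,\sw}\le\|f\|_{2,\sw}$ coming from $|m_n(\t)|\le 1$ and Parseval. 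For a polynomial $f$ the quantity $S_{\t,\sw}f=\sum_n m_n(\t)\proj_n(\sw;f)$ is a finite sum, manifestly continuous in $\t$, and the uniform $L^2$ bound then upgrades this to $L^2$-continuity of $\t\mapsto S_{\t,\sw}f$ for every $f\in L^2$. Choosing $\t_k\to\t$ along points where $S_{\t_k,\sw}f\ge 0$ and using that $L^2$ limits preserve nonnegativity yields $S_{\t,\sw}f\ge 0$ for every $\t$ and every nonnegative $f\in L^2$; truncation and monotone convergence extend this to all nonnegative $f\in L^1(\Omega,\sw)$.

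With positivity in hand, part (iii) is routine. The zeroth multiplier is $m_0(\t)=1$ and the kernel $K_\t(x,y)=\sum_n m_n(\t)P_n(\sw;x,y)$ is symmetric, so $S_{\t,\sw}$ is a positive, self-adjoint, unital operator: $S_{\t,\sw}1=1$, whence $\int_\Omega S_{\t,\sw}f\,\sw\,\d\sm=\int_\Omega f\,\sw\,\d\sm$ by self-adjointness. Then $|S_{\t,\sw}f|\le S_{\t,\sw}|f|$ gives $\|S_{\t,\sw}f\|_{\infty,\sw}\le\|f\|_{\infty,\sw}$ and, after integration, $\|S_{\t,\sw}f\|_{1,\sw}\le\|f\|_{1,\sw}$; Riesz--Thorin interpolation delivers the contraction on every $L^p$. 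For the limit as $\t\to 0$ I would use $m_n(\t)\to 1$: the claim is immediate for polynomials (finite sums converging uniformly, since $\cos\t\to 1$ and $P_n^{(\a,\b)}$ is continuous), and a standard three-$\epsilon$ argument combining this with the $L^p$ contraction and the density of polynomials in $L^p(\Omega,\sw)$ (or in $C(\Omega)$ when $p=\infty$) completes the proof.
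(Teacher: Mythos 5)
The paper does not actually prove this proposition; it simply cites \cite[Proposition 3.8]{X21}, so there is no in-text argument to compare against. Your reconstruction is correct and follows what is essentially the standard route: identify $S_{\t,\sw}$ as the multiplier $m_n(\t)=P_n^{(\a,\b)}(\cos\t)/P_n^{(\a,\b)}(1)$, verify (i) by matching projections (first for polynomial $g$, then by density using \eqref{eq:Young} and the uniform $L^2$ bound), deduce positivity by duality against nonnegative $g$ plus continuity of $\t\mapsto S_{\t,\sw}f$ in $L^2$, and then get (iii) from positivity, $S_{\t,\sw}1=1$, self-adjointness of the symmetric kernel, and interpolation. You correctly single out the only genuinely delicate point, namely upgrading ``$S_{\t,\sw}f\ge0$ for a.e.\ $\t$'' to ``for every $\t$''; your $L^2$-continuity argument (equivalently, realizing $S_{\t,\sw}f$ as the limit of averages $f*_\sw g_\ve$ with $g_\ve$ a normalized indicator concentrating at $\cos\t$) is the same device used in \cite{X21}. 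Two small bookkeeping remarks: the a.e.\ exceptional set in (i) depends on $g$, so the duality step should be run over a countable dense family of nonnegative test functions before invoking continuity; and the normalization constant in (i) should really be $c'_{\a,\b}$ after the substitution $s=\cos\t$ (this appears to be a typo in the paper, which your $\hat g_n$ inherits), but it does not affect the argument.
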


These results, under the existence assumption of the addition formula, allow us to deduce many results
of the Fourier orthogonal series on $\Omega$ from the corresponding results of the Fourier-Jacobi series. 
To illustrate the result, we consider the Poisson sum and the Ces\`aro means below. 

\subsection{Poisson integral} 
Let $q_r^{(\a,\b)}(t,s)$ denote the Poisson kernel of the Jacobi polynomials,
$$
q_r^{(\a,\b)}(t,s) =  \sum_{n=0}^\infty \frac{P_n^{(\a,\b)}(t) P_n^{(\a,\b)}(s)}{h_n^{(\a,b)}} r^n, \quad 0 \le r < 1.
$$
For $f \in L^1(\Omega, \sw)$ and $0 \le r <1$, the Poisson integral of $f$ is defined by 
\begin{align} \label{eq:Poisson}
   Q_r(\sw; f,x)\, & = \sum_{n=0}^\infty \proj_n (\sw; f, x) r^n \\ 
      & = f *_\sw q_r^{(\a,\b)} \quad \hbox{with} \quad q_r^{(\a,\b)}(t):= q_r^{(\a,\b)}(t,1). \notag
\end{align}
 
\begin{lem} 
For $\a, \b \ge -\f12$, the kernel $q_r^{(\a,\b)}$ satisfies 
\begin{equation}\label{eq:q_rPoisson}
  q_r^{(\a,\b)}(t) =  c_\b \int_{-1}^1 \frac{1-r }{\left(1-2 \sqrt{r} \sqrt{\f{1+t}{2}}  u + r\right)^{\a+\b+2}}(1-u^2)^{\b-\f12} \d u,
\end{equation}
where $c_\b = c_{\b-\f12,\b - \f12}$ and the identity holds for $\b \to -\f12$ under the limit
\begin{equation}\label{eq:limitInt}
 \lim_{\b\to -\f12} c_\b \int_{-1}^1 g(u)(1-u^2)^{\b-\f12} \d u = \frac{g(1)+g(-1)}{2}.
\end{equation}
In particular, $q_r^{(\a,\b)}(t) \ge 0$ for $t \in [-1,1]$. 
\end{lem}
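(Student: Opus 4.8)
The plan is to expand $q_r^{(\a,\b)}$ as a power series and to recognize the right-hand side of \eqref{eq:q_rPoisson} as a disguised ultraspherical Poisson kernel. Since $P_n^{(\a,\b)}(1) = \binom{n+\a}{n}$, the definition gives $q_r^{(\a,\b)}(t) = \sum_{n=0}^\infty Z_n^{(\a,\b)}(t) r^n$, which converges for $|r|<1$ because $Z_n^{(\a,\b)}(t)$ grows at most polynomially in $n$. On the other side, writing $x = \sqrt{(1+t)/2}$, $\rho = \sqrt r$ and $\l = \a+\b+1 > 0$, the calculation $\sum_{n} \frac{n+\l}{\l} C_n^\l(z)\rho^n = (1-\rho^2)(1-2z\rho+\rho^2)^{-\l-1}$, obtained by applying $\rho\,\d/\d\rho + \l$ to the generating function of $C_n^\l$, identifies the integrand as $\sum_{n=0}^\infty Z_n^{\l}(xu)\, r^{n/2}$. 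Integrating term by term against the (probability) measure $\d\nu_\b(u) = c_\b (1-u^2)^{\b-\f12}\d u$ — justified by uniform convergence in $u$ for fixed $r<1$ — and discarding the odd-index terms, which vanish by symmetry, reduces \eqref{eq:q_rPoisson} for $\b > -\f12$ to the single coefficient identity
\[
  Z_k^{(\a,\b)}(t) = \int_{-1}^1 Z_{2k}^{\a+\b+1}(xu)\,\d\nu_\b(u), \qquad x = \sqrt{\tfrac{1+t}{2}}, \quad k = 0,1,2,\ldots.
\]

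To prove this identity I would pass from the ultraspherical to the Jacobi side by the quadratic transformation $C_{2k}^{\l}(s) = \frac{(\l)_k}{(\f12)_k} P_k^{(\l-\f12,-\f12)}(2s^2-1)$, so that the integrand becomes a constant multiple of $P_k^{(\a+\b+\f12,-\f12)}(2x^2u^2-1)$. Substituting $v = u^2$ turns $\d\nu_\b$ into $c_\b\, v^{-\f12}(1-v)^{\b-\f12}\d v$ on $[0,1]$; expanding $P_k^{(\a+\b+\f12,-\f12)}(2x^2v-1) = \frac{(-1)^k(\f12)_k}{k!}\sum_{j}\frac{(-k)_j (k+\a+\b+1)_j}{(\f12)_j\, j!}(x^2v)^j$ and integrating each power $v^j$ by the Beta integral collapses the factor $(\f12)_j$ and resums to ${}_2F_1(-k,k+\a+\b+1;\b+1;x^2)$. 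Since $x^2 = (1+t)/2$, this hypergeometric function is exactly $P_k^{(\a,\b)}(t)$ up to the factor $(\f12)_k/(\b+1)_k$; collecting the constants and using the explicit value of $P_k^{(\a,\b)}(1)/h_k^{(\a,\b)}$ confirms that the right-hand side equals $Z_k^{(\a,\b)}(t)$. I expect this resummation, together with the bookkeeping of the Pochhammer constants and the correct reading of the normalizing constant $c_\b$, to be the main obstacle; everything else is routine.

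For the limiting value $\b \to -\f12$, the measures $\d\nu_\b$ are symmetric probability measures that concentrate at the endpoints, so they converge weakly to $\frac12(\delta_1 + \delta_{-1})$, which is precisely \eqref{eq:limitInt}; applying this to the continuous integrand yields the stated $\b = -\f12$ formula, and the same conclusion can be checked directly from the even part of the ultraspherical Poisson kernel via the quadratic transformation for $P_n^{(\a,-\f12)}$. Finally, positivity is immediate from the integral representation: for $0 \le r < 1$ and $t \in [-1,1]$ one has $|xu| \le 1$, hence $1 - 2\sqrt r\, xu + r \ge (1-\sqrt r)^2 > 0$, while $1-r>0$, $(1-u^2)^{\b-\f12}\ge 0$ and $c_\b>0$; thus the integrand is nonnegative and so is $q_r^{(\a,\b)}(t)$.
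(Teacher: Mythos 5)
Your argument is correct and follows the same skeleton as the paper's: expand $q_r^{(\a,\b)}$ in powers of $r$, recognize the integrand on the right of \eqref{eq:q_rPoisson} as the Gegenbauer Poisson kernel $\sum_n Z_n^{\a+\b+1}(xu)\,r^{n/2}$ with $x=\sqrt{(1+t)/2}$, kill the odd terms by the symmetry $u\mapsto -u$, and reduce everything to the coefficient identity $Z_k^{(\a,\b)}(t)=c_\b\int_{-1}^1 Z_{2k}^{\a+\b+1}(xu)(1-u^2)^{\b-\f12}\,\d u$. The one genuine difference is how that identity is obtained: the paper simply imports it from the theory of generalized Gegenbauer polynomials (citing \cite[Theorem 1.5.6]{DX}), whereas you rederive it from scratch via the quadratic transformation $C_{2k}^{\l}(s)=\frac{(\l)_k}{(\f12)_k}P_k^{(\l-\f12,-\f12)}(2s^2-1)$, the substitution $v=u^2$, and a term-by-term Beta integral that trades $(\f12)_j$ for $(\b+1)_j$ and resums to ${}_2F_1(-k,k+\a+\b+1;\b+1;\f{1+t}2)$. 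I checked the Pochhammer bookkeeping you deferred: the resulting constant is $\frac{2k+\l}{\l}\frac{(\l)_k}{(\b+1)_k}$ with $\l=\a+\b+1$, which does equal $P_k^{(\a,\b)}(1)/h_k^{(\a,\b)}$, so the computation closes. Your route is longer but self-contained; the paper's is shorter at the price of an external reference. Your treatment of the $\b\to-\f12$ limit (weak convergence of $\d\nu_\b$ to $\f12(\delta_1+\delta_{-1})$) and of positivity is also more explicit than the paper's, which asserts both without comment. One cosmetic point, present in the paper as well: for the measure $c_\b(1-u^2)^{\b-\f12}\d u$ to be a probability measure (as it must be, since both sides equal $1$ at $r=0$), the constant should be read as $c_{\b-\f12,\b-\f12}'$ rather than $c_{\b-\f12,\b-\f12}$; your reading of $\d\nu_\b$ as normalized is the correct one.
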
 

\begin{proof}
By \eqref{eq:proj=f*g}, the function $q_r^{(\a,\b)}$ is given by 
$$
 q_r^{(\a,\b)}(t)  = \sum_{n=0}^\infty  Z_n^{(\a,\b)}(t)  r^n.
$$
The polynomial $P_n^{(\a,\b)}(\cos 2 \t)$ can be identified with the generalized Gegenbauer polynomial
of degree $2n$ that are orthogonal with respect to $|x|^{\a+\f12} (1-x^2)^\b$ on $[-1,1]$, and the latter
one can be written as an integral of the Gegenbauer polynomial of degree $2n$. In particular, by 
\cite[Theorem 1.5.6]{DX}, the following identity holds,
$$
  Z_n^{(\a,\b)} (\cos 2 \t) = c_{\b}\int_{-1}^1 Z_{2n}^{\a+\b+1} (u \cos \t) (1-u^2)^{\b-\f12} \d u,
$$ 
where $c_\b = c_{\b-\f12,\b - \f12}$, and the formula holds under the limit \eqref{eq:limitInt} for $\b = - \f12$, 
\begin{equation} \label{eq:Z-quadr-trans}
 Z_n^{(\a,-\f12)} (\cos 2 \t) =  Z_{2n}^{\a+\f12} (\cos \t). 
\end{equation}
The right-hand side of the last integral is zero if we replace $2n$ by $2n+1$, as can be seen by a change of variable 
$u \mapsto -u$ and using $C_n^\l (t) = (-1)^\l C_n^\l(t)$. Hence, using the 
identity
$$
      \sum_{n=0}^\infty Z_n^\l(u) r^n = \frac{1-r^2}{(1-2r u + r^2)^{\l+1}}, \qquad 0 \le r < 1, 
$$
we deduce that
\begin{align*}
  \sum_{n=0}^\infty Z_n^{(\a,\b)} (\cos 2 \t)  r^n \, & =  c_{\b}\int_{-1}^1  \sum_{n=0}^\infty Z_n^{\a+\b+1} (u \cos \t)  (\sqrt{r})^n
  (1-u^2)^{\b-\f12} \d u \\
      & = c_{\b}\int_{-1}^1 \frac{1-r }{(1-2 \sqrt{r} u \cos \t  + r)^{\a+\b+2}} (1-u^2)^{\b-\f12} \d u. 
\end{align*}
Setting $t = \cos (2\t) = 2 \cos^2\t -1$ proves the result for $\b > -\f12$. 
\end{proof}

The basic properties of the Poisson integral is given in the following theorem. 

\begin{thm} \label{thm:poisson}
Assume the addition formula holds for the weight function $\sw$. Then 
\begin{enumerate}
\item  $Q_r(\sw; f_0) = f_0$ for $f_0(x) =1$; 
\item  $Q_r(\sw)$ are positive: $Q_r(\sw; f) \ge 0$ if $f \ge 0$; 
\item  $Q_r(\sw)$ are symmetric: $Q_r(\sw)$ is self-adjoint on $L^2(\Omega,\sw)$;
\item  $Q_r(\sw)$ are contractions on $L^p(\Omega, \sw)$: 
          $\|Q_r(\sw; f)\|_{p,\omega} \le \|f\|_{p,\Omega}$, $1 \le p \le \infty$;
\item If $f \in L^p(\Omega, \sw)$, $1 \le p < \infty$, or $f\in C(\Omega)$ if $p = \infty$, then 
$$
    \lim_{r \to 1-}  \|Q_r(\sw; f) - f \|_{\sw, p} = 0.  
$$
\end{enumerate}
\end{thm}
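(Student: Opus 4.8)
The plan is to read off all five properties from the two representations of the Poisson integral supplied by \eqref{eq:Poisson} and \eqref{eq:proj=f*g}, namely the spectral series $Q_r(\sw;f)=\sum_{n\ge0}\proj_n(\sw;f)\,r^n$ and the convolution $Q_r(\sw;f)=f*_\sw q_r^{(\a,\b)}$, supplemented by the integral form of the convolution in Proposition \ref{prop:Stheta}(i). Parts (1) and (3) come from the series alone. For (1) I would take $f_0\equiv1$; since $f_0\in\CV_0(\Omega,\sw)$ we have $\proj_0(\sw;f_0)=f_0$ and $\proj_n(\sw;f_0)=0$ for $n\ge1$, so only the $n=0$ term survives and $Q_r(\sw;f_0)=f_0$. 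For (3) I would use that each $\proj_n$ is an orthogonal projection, hence self-adjoint on $L^2(\Omega,\sw)$; writing $\la Q_r(\sw;f),g\ra_\sw=\sum_n r^n\la\proj_n f,\proj_n g\ra_\sw$ displays the symmetry in $f$ and $g$ simultaneously.

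For (2) and (4) I would pass to the integral representation $Q_r(\sw;f)(x)=c_{\a,\b}\int_0^\pi S_{\t,\sw}f(x)\,q_r^{(\a,\b)}(\cos\t)\,w_{\a,\b}(\cos\t)\sin\t\,\d\t$ furnished by Proposition \ref{prop:Stheta}(i) with $g=q_r^{(\a,\b)}$. The preceding lemma gives $q_r^{(\a,\b)}\ge0$ on $[-1,1]$ and Proposition \ref{prop:Stheta}(ii) gives $S_{\t,\sw}f\ge0$ when $f\ge0$; since $w_{\a,\b}(\cos\t)\sin\t\ge0$ on $[0,\pi]$, the integrand is nonnegative and (2) follows. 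For (4), Minkowski's integral inequality in $x$ followed by the contraction $\|S_{\t,\sw}f\|_{p,\sw}\le\|f\|_{p,\sw}$ from Proposition \ref{prop:Stheta}(iii) yields $\|Q_r(\sw;f)\|_{p,\sw}\le\|f\|_{p,\sw}\cdot c_{\a,\b}\int_0^\pi q_r^{(\a,\b)}(\cos\t)\,w_{\a,\b}(\cos\t)\sin\t\,\d\t$; the trailing integral is exactly $Q_r(\sw;f_0)$ for $f_0\equiv1$ (as $S_{\t,\sw}f_0=f_0$), which equals $1$ by part (1). Alternatively (4) is immediate from Young's inequality \eqref{eq:Young} with the exponent on $f$ taken to be $p$ and that on $g$ taken to be $1$, once one records $\|q_r^{(\a,\b)}\|_{L^1([-1,1],w_{\a,\b})}=1$, the same normalization.

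The substance lies in (5), which I would establish by the standard approximate-identity argument. First, for a polynomial $P$ of degree $N$ the series terminates, $Q_r(\sw;P)=\sum_{n=0}^N\proj_n(\sw;P)\,r^n$, so $Q_r(\sw;P)\to\sum_{n=0}^N\proj_n(\sw;P)=P$ as $r\to1^-$, and being a finite sum this convergence holds in every $\|\cdot\|_{p,\sw}$. Then, given $f\in L^p(\Omega,\sw)$ for $1\le p<\infty$ (or $f\in C(\Omega)$ for $p=\infty$) and $\ve>0$, I would choose a polynomial $P$ with $\|f-P\|_{p,\sw}<\ve$ — polynomials being dense in $C(\Omega)$ by Stone--Weierstrass on the compact set $\Omega$, hence in $L^p(\Omega,\sw)$ — and split $\|Q_r(\sw;f)-f\|_{p,\sw}\le\|Q_r(\sw;f-P)\|_{p,\sw}+\|Q_r(\sw;P)-P\|_{p,\sw}+\|P-f\|_{p,\sw}$. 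The contraction (4) bounds the first term by $\|f-P\|_{p,\sw}<\ve$, the last term is $<\ve$, and the middle term tends to $0$ by the polynomial case, giving $\limsup_{r\to1^-}\|Q_r(\sw;f)-f\|_{p,\sw}\le2\ve$ and hence the claim. The main obstacle is not any single estimate but making sure the uniform-in-$r$ contraction (4) is secured before (5) is attempted; everything then reduces to the termination of the series on polynomials and their density, both routine on a compact domain.
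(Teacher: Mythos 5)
Your treatment of parts (1)--(4) matches the paper's: (1)--(3) are read off the series representation and the nonnegativity of $q_r^{(\a,\b)}$, and (4) is Young's inequality \eqref{eq:Young} with the exponent on $g$ equal to $1$ (together with $\|q_r^{(\a,\b)}\|_{L^1([-1,1],w_{\a,\b})}=1$), exactly as in the text. Part (5) is where you genuinely diverge, and your argument is correct. The paper writes $Q_r(\sw;f,x)-f(x)$ as an integral of $S_{\t,\sw}f(x)-f(x)$ against the Poisson kernel, invokes the continuity $\lim_{\t\to0}\|S_{\t,\sw}f-f\|_{\sw,p}=0$ from Proposition \ref{prop:Stheta}, and then kills the tail $\t\ge\delta$ using the pointwise decay of $q_r^{(\a,\b)}(\cos\t)$ obtained from \eqref{eq:poisson-lwbd} --- an approximate-identity argument that leans on the explicit formula \eqref{eq:q_rPoisson}. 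You instead observe that the series terminates on polynomials, so $Q_r(\sw;P)\to P$ trivially as $r\to1^-$, and transfer this to all of $L^p(\Omega,\sw)$ by density of polynomials together with the uniform contraction (4). Your route is more elementary and self-contained: beyond $q_r^{(\a,\b)}\ge 0$ and its unit $L^1$ norm it needs no information about the kernel, and it bypasses \eqref{eq:q_rPoisson} entirely. What the paper's argument buys in exchange is the quantitative localization of $q_r^{(\a,\b)}$ away from $\t=0$, which is reused later (e.g.\ in Theorem \ref{thm:PoissonM}), and a derivation of (5) from the corresponding property of the translation operator, keeping the logic uniform with the rest of Section 2. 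The only step you should make explicit is that density of polynomials in $L^p(\Omega,\sw)$, $1\le p<\infty$, rests on $\Omega$ being compact and $\sw\,\d\sm$ being a finite measure (here normalized to total mass $1$); with that recorded, your proof is complete.
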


\begin{proof}
The first three properties follow immediately from \eqref{eq:Poisson} and the properties of $q_r^{(\a,\b)}$.
The fourth one is a consequence of the inequality \eqref{eq:Young} with $r =1$.

We now prove the fifth one. Since $c_{\a,\b}' \int_{-1}^1 q_r^{(\a,\b)} (t) w_{\a,\b}(t) \d t =1$, we have
$$
 Q_r(\sw; f, x) - f(x) = c_{\a,\b} \int_0^\pi \left[S_{\t, \sw} f(x) - f(x)\right] q_r^{(\a,\b)}(\cos\t) w_{\a,\b}(\cos \t) \sin \t \d \t.
$$
By Proposition \ref{prop:Stheta}, for every $\ve > 0$, there is a $\delta > 0$ such that 
$ \|S_{\t,\sw} f -f \|_{\omega,p}  < \ve$ whenever $\t < \delta$.  Hence, by the Minkowski inequality, we obtain
\begin{align*}
   \|Q_r(\sw; f) f- f\|_{p,\sw}\,& \le c_{\a,\b} \int_0^\pi  \| S_{\t, \sw} f- f\|_{p,\sw} 
        q_r^{(\a,\b)} (\cos\t) w_{\a,\b}(\cos \t) \sin \t \d \t \\
        & \le \ve + 2 \|f\|_{p,\sw} c_{\a,\b} \int_\delta^\pi q_r^{(\a,\b)} (\cos\t) w_{\a,\b}(\cos \t) \sin \t \d \t.
\end{align*}
For $0 \le \t \le \pi$, $\cos \tfrac \t 2 \ge 0$. Using the inequality 
\begin{align}\label{eq:poisson-lwbd}
 1-2\sqrt{r} \cos\tfrac \t 2 u + r \, & = (1-\sqrt{r})^2 + 2 \sqrt{r} (1- \cos \tfrac \t 2) + 2 \sqrt{r}\cos \tfrac \t 2 (1- u)  \\
      & \ge (1-\sqrt{r})^2 + 4\sqrt{r} \sin^2 \tfrac{\t}{4}, \notag
\end{align}
it follows readily that, for $\t \ge \delta$, 
\begin{align*}
  q_r^{(\a,\b)} (\cos\t) \, & = c_\b \int_{-1}^1 \frac{1-r }{\left(1-2 \sqrt{r} \cos\f{\t}{2} u + r\right)^{\a+\b+2}}(1-u^2)^{\b-\f12} \d u \\
     & \le \frac{1-r }{\left((1-\sqrt{r})^2 + 4\sqrt{r} \sin^2 \tfrac{\delta}{4}\right)^{\a+\b+2}}
\end{align*} 
so that $q_r^{(\a,\b)}(\cos \t) \to 0$ as $r \to 1-$ for $\delta \le \t \le \pi$. Putting these together completes the proof. 
\end{proof}

The Poisson operator will be needed in the Subsection 2.5.

\subsection{Ces\`aro means and the Hardy-Littlewood maximal function}
For $\delta > 0$, the Ces\`aro $(C,\delta)$ means of the  Fourier orthogonal series with respect to $\sw$ on 
$\Omega$ are defined by
\begin{align*}
 S_n^\delta (\sw;f,x) :=  \f{1}{\binom{n+\delta}{n}} \sum_{k=0}^n \binom{n-k+\delta}{n-k} \proj_k(\sw; f,x).
\end{align*}
Let $k_n^{(\a,\b), \delta}(t,s)$ be the kernel of the $(C,\delta)$ mean of the Fourier-Jacobi series. Then
$$
 k_n^{(\a,\b), \delta}(t): = k_n^{(\a,\b), \delta}(t,1) = \f{1}{\binom{n+\delta}{n}} \sum_{k=0}^n \binom{n-k+\delta}{n-k} 
      Z_k^{(\a,\b)}(t).
$$
Using \eqref{eq:proj=f*g}, we can write $S_n^\delta (\sw;f)$ as the convolution 
\begin{equation} \label{eq:(C,delta)}
      S_n^\delta(\sw; f) = f *_\sw  k_n^{(\a,\b), \delta}.
\end{equation}
Our method allows us to derive the properties of $S_n^\delta (\sw;f)$ from those of $k_n^{(\a,\b),\delta}$. 
In particular, the following theorem is stated in \cite[Theorem 3.6]{X21}. 
 
\begin{thm}\label{thm:cesaro}
Let $\sw$ be a weight function that admits the addition formula. The Ces\`aro $(C,\delta)$ means 
of the Fourier orthogonal series with respect to $\sw$ satisfy 
\begin{enumerate}
\item If $\delta \ge \a+\b+2$, then $S_n^\delta(\sw; f)$ is a nonnegative operator;
\item If $\delta > \max\{\a,\b\} + \f12$, then for $n= 0,1,2,\ldots$,
$$
    \left \|S_n^\delta(\sw; f)\right \|_{p,\sw} \le  \|f\|_{p,\sw}, \qquad 1 \le p \le \infty.
$$
\end{enumerate}
\end{thm}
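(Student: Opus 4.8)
The plan is to transfer both statements to the one–dimensional Fourier–Jacobi setting through the convolution identity \eqref{eq:(C,delta)}, namely $S_n^\delta(\sw; f) = f \ast_\sw k_n^{(\a,\b),\delta}$, so that the properties of the operator $S_n^\delta(\sw)$ are inherited from properties of the single–variable Cesàro kernel $k_n^{(\a,\b),\delta}(t) = k_n^{(\a,\b),\delta}(t,1)$ of the Fourier–Jacobi series. The soft part of the argument mirrors the proof of Theorem \ref{thm:poisson}: the translation operator $S_{\t,\sw}$ and Young's inequality \eqref{eq:Young} carry estimates on $k_n^{(\a,\b),\delta}$ over to $S_n^\delta(\sw)$, and the only genuinely hard input is a pair of classical facts about $k_n^{(\a,\b),\delta}$ on $[-1,1]$.

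For (i) I would invoke Proposition \ref{prop:Stheta}(i) with $g = k_n^{(\a,\b),\delta}$ to write
$$
 S_n^\delta(\sw; f, x) = c_{\a,\b} \int_0^\pi S_{\t,\sw} f(x)\, k_n^{(\a,\b),\delta}(\cos \t)\, w_{\a,\b}(\cos \t) \sin \t \, \d \t .
$$
If $f \ge 0$, then $S_{\t,\sw} f \ge 0$ by Proposition \ref{prop:Stheta}(ii), so the integrand is nonnegative as soon as $k_n^{(\a,\b),\delta}(\cos\t) \ge 0$ for all $\t \in [0,\pi]$. Thus (i) reduces to the pointwise nonnegativity $k_n^{(\a,\b),\delta}(t) \ge 0$ on $[-1,1]$, which is exactly the classical Kogbetliantz–Gasper theorem, valid for $\delta \ge \a+\b+2$ when $\a \ge \b \ge -\f12$ (see, e.g., \cite{DX}).

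For (ii) I would apply Young's inequality \eqref{eq:Young} with $q = p$ and $r = 1$, giving
$$
 \left\| S_n^\delta(\sw; f) \right\|_{p,\sw} = \left\| f \ast_\sw k_n^{(\a,\b),\delta}\right\|_{p,\sw} \le \|f\|_{p,\sw}\, \big\| k_n^{(\a,\b),\delta}\big\|_{L^1([-1,1],w_{\a,\b})},
$$
which reduces the matter to the normalized $L^1$ norm (the Lebesgue constant) of the Jacobi kernel. Since $Z_0^{(\a,\b)} \equiv 1$ and each $Z_k^{(\a,\b)}$ with $k \ge 1$ is orthogonal to constants, one has $c'_{\a,\b}\int_{-1}^1 k_n^{(\a,\b),\delta}(t) w_{\a,\b}(t)\,\d t = 1$; in the subrange $\delta \ge \a+\b+2$ part (i) makes $k_n^{(\a,\b),\delta}$ nonnegative, so this $L^1$ norm equals $1$ and the contraction is immediate. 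For the wider range $\delta > \max\{\a,\b\}+\f12$, where the kernel changes sign, the needed control of $\|k_n^{(\a,\b),\delta}\|_{L^1(w_{\a,\b})}$ is precisely the classical statement that the $(C,\delta)$ Lebesgue constants of the Fourier–Jacobi series remain bounded exactly above the critical index $\max\{\a,\b\}+\f12$ (see, e.g., \cite{DX}).

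The main obstacle is therefore entirely one–dimensional: the nonnegativity of $k_n^{(\a,\b),\delta}$ for $\delta \ge \a+\b+2$ and the uniform boundedness of its Lebesgue constants for $\delta > \max\{\a,\b\}+\f12$, both delicate estimates on Jacobi polynomials. Once these are granted, the passage to $\Omega$ via \eqref{eq:(C,delta)}, Proposition \ref{prop:Stheta}, and \eqref{eq:Young} is routine and completely independent of the particular geometry of $\Omega$.
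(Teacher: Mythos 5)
Your proposal is correct and is essentially the intended argument: the paper itself offers no proof here, merely citing \cite[Theorem 3.6]{X21}, and the surrounding text makes clear that the mechanism is exactly the one you describe --- write $S_n^\delta(\sw;f)=f*_\sw k_n^{(\a,\b),\delta}$ via \eqref{eq:(C,delta)} and import the one-dimensional facts (Kogbetliantz--Gasper positivity of the Jacobi Ces\`aro kernel for $\delta\ge\a+\b+2$, and boundedness of the Fourier--Jacobi Lebesgue constants for $\delta>\max\{\a,\b\}+\f12$) through positivity of $T^{(\a,\b)}$ and Young's inequality \eqref{eq:Young}. Your remark that in the wider range the bound is $c\|f\|_{p,\sw}$ rather than $\|f\|_{p,\sw}$ is also the honest reading of the statement.
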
 

For the unit sphere, the Ces\`aro means of the Fourier expansions in spherical harmonics plays an 
essential role for the proof of a multiplier theorem, as shown in \cite{BC}. Moreover, the approach also
applies to $h$-spherical harmonics series on the unit sphere, which are orthogonal polynomials with 
respect to a family of product weight functions on the sphere. It turns out that the same approach can
also be used to derive an multiplier theorem when $(\Omega,\sw)$ admits an addition formula, 
provided that the Ces\`aro means satisfies an additional property. 

To state this property, we defined the maximal Ces\`aro $(C,\delta)$ operator: for $\delta \ge 0$, 
$$
  S_*^\delta(\sw; f, x):= \sup_{n \ge 0} \left|S_n^\delta(\sw; f, x)\right|, \qquad x \in \Omega.
$$
We also need the Hardy-Littlerwood maximal function on the homogeneous space $(\Omega,\sw,\sd)$,
which is defined by 
$$
   M_\sw f(x): = \sup_{r > 0} \frac{1}{\sw(B(x,r))} \int_{B(x,r)} |f(y)| \sw(y) \d \sm(y), \quad x \in \Omega.
$$ 
The property states that the maximal $(C,\delta)$ operator is bounded by the maximal operator, which we
state as an assertion. 

\medskip
\textbf{Assertion 1}. {\it Assume $\sw$ admits the addition formula \eqref{eq:7Pn}. Let $\delta_0= \max\{\a,\b\}+\f12$.
For $\delta > \delta_0$ and $f\in L^1(\Omega,\sw)$, 
\begin{equation} \label{eq:maxS<M}
      S_*^\delta (\sw; f, x)  \le c M_\sw f(x), \qquad x \in \Omega,
\end{equation}
where $M_\sw f$ is the Hardy-Littlewood maximal function. }
\medskip

The maximal function $M_\sw$ requires local information for $f$, specified by the metric $\sd$ on $\Omega$.
This is the first time that we encounter the metric, since our development so far is entirely based on the 
orthogonal structure specified by the addition formula \eqref{eq:7Pn}. In order to establish the inequality 
\eqref{eq:maxS<M}, we shall need further information on the orthogonal structure, so that the convolution 
structure $f *_\sw g$ can be localized. Merely the existence of an addition formula \eqref{eq:7Pn} is not
sufficient for the inequality \eqref{eq:maxS<M}. The Assertion 1 is known to be true for the unit sphere, the 
unit ball and the simplex with classical weight functions. We shall prove that it holds on conic domains as well 
in the next section. 

The Hardy-Littlewood inequality satisfies the Fefferman-Stein inequality \cite{FS}. A proof is given 
in \cite[pp. 51-55]{Stein}. 

\begin{thm} \label{thm:FeSt}
If $1 < p, q < \infty$ and $\{f_j\}$ is a sequence of functions on $\Omega$, then 
$$
   \left\| \left( \sum_{k=0}^\infty \left|M_\sw f \right |^q \right)^{\f1q}\right \|_{p,\sw} \le 
      c_{p,q}  \left \| \left( \sum_{k=0}^\infty  \left | f_k \right |^q \right)^{\f1q}\right \|_{p,\sw}. 
$$
\end{thm}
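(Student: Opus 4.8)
The plan is to treat Theorem \ref{thm:FeSt} as the transfer of the classical Fefferman--Stein vector-valued maximal inequality to the abstract homogeneous space $(\Omega,\sw,\sd)$, observing that the only structural input the classical proof uses is the doubling property of $\sw$, which holds by hypothesis. Accordingly, I would first record the scalar maximal theorem on $(\Omega,\sw,\sd)$ together with the scalar \emph{weighted} Fefferman--Stein inequality, and then run the standard vector-valued bootstrap by duality.

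First I would establish the scalar bounds for $M_\sw$. Since all balls $B(x,r)$ are measurable and $\sw$ is doubling, a Vitali-type covering argument yields the weak type $(1,1)$ estimate $\sw(\{x: M_\sw f(x) > \lambda\}) \le c\lambda^{-1}\|f\|_{1,\sw}$; combined with the trivial bound $\|M_\sw f\|_{\infty,\sw}\le\|f\|_{\infty,\sw}$ and Marcinkiewicz interpolation, this gives $\|M_\sw f\|_{p,\sw}\le c_p\|f\|_{p,\sw}$ for $1<p\le\infty$. The same covering lemma yields the scalar weighted inequality
$$
  \int_\Omega (M_\sw f)^s\, u\, \sw\, \d\sm \le c_s \int_\Omega |f|^s\, M_\sw u\, \sw\,\d\sm, \qquad 1<s<\infty,
$$
valid for every nonnegative $u$; this is the pointwise Fefferman--Stein inequality of \cite[pp.\ 51--55]{Stein}, whose proof again uses only the doubling property.

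Granting these, the vector-valued bound follows by duality. The case $q=p$ is immediate: raise the scalar $L^p$ bound to the $p$-th power and sum over $k$. For $q<p$, set $r=p/q>1$ and dualize in $L^r$: for nonnegative $g$ with $\|g\|_{r'}\le 1$, the weighted inequality with exponent $s=q$ and weight $g$ gives
$$
  \sum_k \int_\Omega |M_\sw f_k|^q\, g\, \sw\,\d\sm \le c_q \sum_k \int_\Omega |f_k|^q\, M_\sw g\, \sw\,\d\sm,
$$
and Hölder (with $L^r$ and $L^{r'}$) together with the scalar boundedness of $M_\sw$ on $L^{r'}$ closes the estimate. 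For $q>p$, I would linearize $M_\sw$ by selecting, measurably, near-optimal radii $r_k(x)$, so that $M_\sw f_k$ is pointwise comparable to $A_{r_k(\cdot)}f_k$ for a positive averaging operator whose adjoint is again dominated pointwise by $c\,M_\sw$ (doubling is used once more here). Dualizing the linearized vector operator against $(g_k)\in L^{p'}(\ell^{q'})$ and applying Hölder in $k$ and in $x$ reduces the $(p,q)$ bound to the $(p',q')$ bound, which falls in the already-settled range $q'<p'$.

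The main obstacle is the first step: confirming that $(\Omega,\sw,\sd)$ genuinely supports the basic covering lemma and hence the weak type $(1,1)$ bound, since $\sd$ need not be the Euclidean metric. Once $(\Omega,\sw,\sd)$ is verified to be a space of homogeneous type in the sense of Coifman--Weiss --- balls measurable and $\sw$ doubling, both of which are assumed --- this step, and therefore everything downstream, is classical; the remaining duality manipulations are soft and identical to the Euclidean case.
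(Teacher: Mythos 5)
The paper offers no proof of Theorem \ref{thm:FeSt}: it simply records the statement and cites \cite{FS} and \cite[pp.~51--55]{Stein}. So your proposal is supplying an argument where the paper supplies a reference. Your first two steps are correct and are exactly the classical route: the covering lemma and weak type $(1,1)$ for $M_\sw$ need only the measurability of balls and the doubling of $\sw$, the scalar weighted inequality $\int_\Omega (M_\sw f)^s u\,\sw\,\d\sm \le c_s\int_\Omega |f|^s\,M_\sw u\,\sw\,\d\sm$ follows the same way, and the duality argument in $L^{p/q}$ settles the range $q\le p$.

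The case $p<q$, however, has a genuine gap: the adjoint of the linearized, variable-radius averaging operator is \emph{not} pointwise dominated by $c\,M_\sw$. Writing $Ag(x)=\sw(B(x,r(x)))^{-1}\int_{B(x,r(x))}g\,\sw\,\d\sm$, its adjoint is
$$
A^*g(y)=\int_\Omega \frac{\chi_{B(x,r(x))}(y)}{\sw(B(x,r(x)))}\,g(x)\,\sw(x)\,\d\sm(x),
$$
and each dyadic band $\{x: 2^{j-1}\le r(x)<2^{j}\}$ can contribute an amount comparable to $M_\sw g(y)$, so the sum over scales can diverge. Concretely, for the Hardy--Littlewood maximal function on $\RR$ with $g=\chi_{[0,1]}$ and the admissible radius function $r(x)=2|x|$ one finds $A^*g(0)=\int_0^1(4x)^{-1}\,\d x=\infty$ while $Mg(0)\le 1$. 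Pointwise domination of the adjoint by the maximal function holds only for a \emph{fixed} radius, and since the linearizing radii $r_k(x)$ must be allowed to depend arbitrarily (measurably) on $x$ and $f_k$, the supremum over radii cannot be passed through the adjoint. The standard repair --- and the one in \cite{FS} and \cite{Stein} --- is to prove instead the $\ell^q$-valued weak type $(1,1)$ estimate
$$
\sw\left(\Big\{x\in\Omega: \Big(\sum_k |M_\sw f_k(x)|^q\Big)^{1/q}>\lambda\Big\}\right) \le \frac{c}{\lambda}\,\left\|\Big(\sum_k |f_k|^q\Big)^{1/q}\right\|_{1,\sw},
$$
obtained from a Calder\'on--Zygmund decomposition of $x\mapsto(\sum_k|f_k(x)|^q)^{1/q}$ (available on a space of homogeneous type), and then to interpolate with the diagonal case $p=q$ to cover $1<p<q$. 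With that substitution the argument is complete.
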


As a consequence of the Assertion 1, we immediately obtain the following corollary. 

\begin{cor}\label{cor:Ces-FeSt}
Assume Assertion 1 holds and $\delta > \delta_0$. Then, for any sequence $\{n_k\}$ of positive integers 
and $f_k \in L^p(\Omega, \sw)$, $1 < p < \infty$,  
$$
   \left\| \left( \sum_{k=0}^\infty \left|S_{n_k}^\delta (\sw, f_k)\right |^2 \right)^{\f12}\right \|_{p,\sw} \le 
      c  \left \| \left( \sum_{k=0}^\infty  \left | f_k \right |^2 \right)^{\f12}\right \|_{p,\sw}. 
$$
\end{cor}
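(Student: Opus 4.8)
The plan is to reduce this vector-valued inequality to the scalar maximal bound supplied by Assertion 1, and then to invoke the Fefferman--Stein inequality of \thmref{thm:FeSt} to pass from the maximal function back to the functions $f_k$ themselves.

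First I would observe that the choice of index $n_k$ plays no essential role: for every $x\in\Omega$ and every $k$, the value $S_{n_k}^\delta(\sw; f_k, x)$ is one of the terms appearing in the supremum defining the maximal Ces\`aro operator, so that pointwise
$$
   \left|S_{n_k}^\delta(\sw; f_k, x)\right| \le S_*^\delta(\sw; f_k, x).
$$
Since $\delta > \delta_0$, Assertion 1 then gives $S_*^\delta(\sw; f_k, x) \le c\, M_\sw f_k(x)$ with a constant $c$ independent of $k$ and $x$. Squaring, summing over $k$, and taking square roots, I obtain the pointwise domination
$$
   \left( \sum_{k=0}^\infty \left|S_{n_k}^\delta (\sw; f_k, x)\right|^2 \right)^{\f12}
     \le c \left( \sum_{k=0}^\infty \left|M_\sw f_k(x)\right|^2 \right)^{\f12}, \qquad x \in \Omega.
$$

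Next I would take the $L^p(\Omega,\sw)$ norm of both sides and apply \thmref{thm:FeSt} with $q=2$ to the right-hand side, which yields
$$
   \left\| \left( \sum_{k=0}^\infty \left|M_\sw f_k\right|^2 \right)^{\f12}\right\|_{p,\sw}
     \le c_{p,2} \left\| \left( \sum_{k=0}^\infty \left| f_k\right|^2 \right)^{\f12}\right\|_{p,\sw}
$$
for $1 < p < \infty$. Chaining the two displays together produces the asserted estimate, with the final constant being the product of the two.

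The argument is essentially mechanical once Assertion 1 is in hand; there is no genuine obstacle here, and indeed the entire content of the corollary is the transfer of the hard analytic input — the pointwise control of the maximal Ces\`aro operator by the Hardy--Littlewood maximal function — into a vector-valued statement via Fefferman--Stein. The only point requiring a word of care is that the bound from Assertion 1 must hold with a constant uniform in $k$, which it does since $\delta$ and $\delta_0$ are fixed and the constant in \eqref{eq:maxS<M} depends only on the fixed parameters; this uniformity is precisely what permits the constant to be pulled outside the $\ell^2$ sum before the norm is taken.
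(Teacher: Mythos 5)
Your proof is correct and follows exactly the route the paper intends: the paper derives the corollary ``immediately'' from Assertion 1 by the pointwise bound $|S_{n_k}^\delta(\sw;f_k,x)|\le S_*^\delta(\sw;f_k,x)\le c\,M_\sw f_k(x)$ followed by the Fefferman--Stein inequality of Theorem~\ref{thm:FeSt} with $q=2$, which is precisely your argument.
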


Both the Fefferman-Stein theorem and the corollary will be needed for proving the multiplier theorem,
as we shall describe in the next subsection. 

\subsection{Multiplier theorem}

We now state the Marcinkiewicz multiplier theorem for the Fourier orthogonal series, for which we need 
the difference operator. Let $\{a_k\}_{k=0}^\infty$ be a sequence of real numbers. The difference operators 
 $\triangle^n$, $n = 0,1,2,\ldots$, are defined recursively by 
$$
   \triangle^0 a_k = a_k, \quad  \triangle^1 a_k = a_k - a_{k+1}, \quad   \triangle^{n+1} a_k =   \triangle^n (  \triangle a_k), \quad
    n=1,2, \ldots. 
$$

\begin{thm}\label{thm:multiplier} 
Assume that $(\Omega,\sw)$ admits the addition formula \eqref{eq:7Pn} and that the Assertion 1 holds. 
Let $\{\mu_j\}_{j=0}^\infty$ be a bounded sequence of real numbers such that 
$$
  \sup_{j \ge 0} 2^{j(k-1)} \sum_{\ell =2^j +1}^{2^{j+1}} \left|\triangle^k \mu_{\ell}\right| \le M < \infty
$$
for some positive integer $k \ge \lfloor \delta_0 +1\rfloor$. Then $\{\mu_j\}$ defines an $L^p(\Omega,\sw)$,
$1 < p < \infty$, multiplier; that is
$$
   \left \| \sum_{k =0}^\infty \mu_k \proj_k (\sw; f)\right \|_{p,\sw} \le c  \|f\|_{p,\sw}, \quad 1 < p < \infty, 
$$
where $c$ is independent of $\mu_j$ and $f$. 
\end{thm}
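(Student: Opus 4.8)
The plan is to follow the Marcinkiewicz--Bonami--Clerc paradigm, decomposing the multiplier sequence dyadically and estimating each block by expressing it as a composition of Ces\`aro means. First I would reduce matters to controlling the partial sums in a Littlewood--Paley fashion. Introduce the dyadic blocks $\Delta_j f = \sum_{k=2^j}^{2^{j+1}-1} \proj_k(\sw; f)$ for $j \ge 0$ (with the bottom block handled separately), so that the multiplier operator $T_\mu f = \sum_k \mu_k \proj_k(\sw; f)$ is built from the pieces $\sum_{k} \mu_k \proj_k(\sw; f)$ on each block. The first key step is the square-function characterization: by the Littlewood--Paley theory available on a homogeneous space carrying a suitable spectral decomposition, one has $\|T_\mu f\|_{p,\sw} \sim \big\| \big(\sum_j |\Delta_j(T_\mu f)|^2\big)^{1/2}\big\|_{p,\sw}$, so it suffices to bound the square function of the blocks by the square function of $f$.

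The second and central step is to rewrite the action of $\mu$ on each dyadic block in terms of the Ces\`aro operators $S_n^\delta(\sw)$, using summation by parts $k$ times. Writing $\proj_\ell(\sw;f)$ as a $k$-fold difference of Ces\`aro kernels and applying Abel summation $k$ times on the interval $2^j+1 \le \ell \le 2^{j+1}$, the operator on the $j$-th block becomes a sum $\sum_\ell (\triangle^k \mu_\ell)\, c_{\ell,n}\, S_n^\delta(\sw;f)$ for appropriate indices $n$ near $2^j$, with bounded combinatorial coefficients $c_{\ell,n}$. Here I choose $\delta$ with $\delta_0 < \delta < k$, which is possible precisely because the hypothesis requires $k \ge \lfloor \delta_0+1\rfloor$; this is what lets the Ces\`aro kernels of order $\delta$ appear after $k$ differences. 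The Marcinkiewicz hypothesis $2^{j(k-1)}\sum_{\ell=2^j+1}^{2^{j+1}}|\triangle^k \mu_\ell| \le M$ then ensures that the $\ell^1$ mass of the coefficients $(\triangle^k\mu_\ell)$ on each block, weighted by the factor $2^{j(k-1)}$ absorbed from the kernel scaling, is uniformly bounded by $M$.

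The third step assembles the pieces. Having expressed $\Delta_j(T_\mu f)$ as an average of finitely many $S_{n}^\delta(\sw; f)$ with coefficients summing to $O(M)$, I apply the Cauchy--Schwarz inequality in the coefficient sum and then pass to the square function over $j$. At this point the vector-valued maximal inequality does the essential work: Corollary \ref{cor:Ces-FeSt} gives
$$
   \Big\| \Big( \sum_{j} \big|S_{n_j}^\delta(\sw; g_j)\big|^2 \Big)^{1/2}\Big\|_{p,\sw}
      \le c \Big\| \Big( \sum_{j} |g_j|^2 \Big)^{1/2}\Big\|_{p,\sw},
$$
which (taking $g_j=\Delta_j f$) bounds the square function of the blocks of $T_\mu f$ by the square function of $f$, and hence by $\|f\|_{p,\sw}$ through the Littlewood--Paley equivalence in the reverse direction. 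Combining the three steps yields $\|T_\mu f\|_{p,\sw} \le cM\|f\|_{p,\sw}$ with $c$ independent of $\mu$ and $f$.

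The main obstacle I expect is the bookkeeping in the summation-by-parts step: one must verify that after $k$ Abel summations the residual coefficients multiplying the Ces\`aro means are genuinely controlled by $2^{j(k-1)}|\triangle^k\mu_\ell|$ and that the boundary terms (from the endpoints $\ell=2^j+1$ and $\ell=2^{j+1}$, and from the lowest frequency block) are harmless. Getting the power of $2^j$ to match the Marcinkiewicz normalization exactly is the delicate part, and it is where the choice $\delta_0 < \delta < k$ with $k \ge \lfloor\delta_0+1\rfloor$ is used in full. The Littlewood--Paley equivalence itself, in both directions, is standard given that $Q_r(\sw)$ from Theorem \ref{thm:poisson} furnishes a positive, contractive, self-adjoint semigroup to which the Stein square-function machinery applies, so I would invoke it rather than reprove it.
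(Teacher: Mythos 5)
Your overall architecture differs from the paper's in one crucial place, and that place is a genuine gap. You take as your starting point the sharp dyadic square-function equivalence $\|T_\mu f\|_{p,\sw} \sim \bigl\| \bigl(\sum_j |\Delta_j(T_\mu f)|^2\bigr)^{1/2}\bigr\|_{p,\sw}$ with $\Delta_j f = \sum_{k=2^j}^{2^{j+1}-1}\proj_k(\sw;f)$, and you justify it by appealing to Stein's semigroup Littlewood--Paley theory for $Q_r(\sw)$. But Stein's machinery yields the equivalence $\|f\|_{p,\sw}\sim\|g(f)\|_{p,\sw}$ only for the \emph{continuous} $g$-function built from $\frac{\d}{\d r}Q_r(\sw;f)$; it says nothing about square functions formed from sharp spectral cutoffs. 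For general orthogonal expansions the partial-sum operators $\sum_{k\le N}\proj_k(\sw;\cdot)$ are not uniformly bounded on $L^p$ outside a restricted range of $p$, so the sharp-block Littlewood--Paley inequality is not an off-the-shelf fact in this setting; indeed the direction $\bigl\|\bigl(\sum_j|\Delta_j f|^2\bigr)^{1/2}\bigr\|_{p,\sw}\le c\|f\|_{p,\sw}$ is ordinarily \emph{derived from} the multiplier theorem by randomizing the signs of the blocks, so invoking it as an input is circular. This is precisely the difficulty that forces Bonami--Clerc, and the present paper, to avoid sharp cutoffs altogether.

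The paper's route instead passes from Stein's $g$-function to $g_\delta(f)=\bigl(\sum_{n\ge1} \frac{1}{n}\,|S_n^{\delta+1}(\sw;f)-S_n^\delta(\sw;f)|^2\bigr)^{1/2}$, with Proposition \ref{prop:gd-g} giving $\|f\|_{p,\sw}\le c\|g_\delta(f)\|_{p,\sw}$; it then proves the upper bound for the perturbed version $g_\delta^*$ (Proposition \ref{prop:gd*bd}) using Assertion 1, Corollary \ref{cor:Ces-FeSt} and the Fefferman--Stein inequality; and it concludes with the purely algebraic inequality $g_\delta(T_\mu f)\le c\, g_\delta^*(f)$ for a suitable weight sequence $\{\nu_n\}$, which is where the $k$-fold summation by parts and the Marcinkiewicz condition enter. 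So your second and third steps are in the right spirit --- the $k$-fold Abel summation, the role of $k\ge\lfloor\delta_0+1\rfloor$, and the use of Corollary \ref{cor:Ces-FeSt} all genuinely appear in the paper's argument --- but they must be rerouted through the Ces\`aro-mean $g$-functions rather than through a dyadic block decomposition whose $L^p$ equivalence you are not entitled to assume.
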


For the unit sphere $\sph$ with $\sw(x) = 1$, such a multiplier theorem is established in \cite{BC}. The proof 
in \cite{BC} is based on the Littlewood-Paley theory on the sphere but and it is quite involved and original. 
The approach also applies, with appropriate twist, on the unit sphere with product type weight functions 
and, as a consequence, on the unit ball and the simplex with classical weight function \cite{DaiX2}. Moreover, 
the proof can be extended to our general setting, under Assertion 1, almost straightforwardly. We provide 
an outline of the proof below to show that this is indeed the case. Throughout the rest of the section, we 
assume that $(\Omega,\sw)$ admits the addition formula \eqref{eq:7Pn} and the Assertion~1 holds. 

The proof of the multiplier theorem relies on several auxiliary $g$-functions. The first one is defined in terms 
of the Poisson integral $Q_r(\sw; f)$, 
$$
  g(f) := \left( \int_0^1 (1-r) \left | \frac{\d}{\d r} Q_r(\sw; f) \right |^2 \d r \right)^{\f12},
$$ 
and the function $x \mapsto g(f)(x)$, defined on $\Omega$, is bounded in $L^p(\Omega,\sw)$. 

\begin{prop}
If $1 < p < \infty$ and $f \in L^p(\Omega,\sw)$, then
\begin{equation}\label{eq:g(f)bound}
   c_p^{-1}  \|f\|_{p,\sw} \le  \|g(f)\|_{p,\sw} \le c_p  \|f\|_{p,\sw},
\end{equation}
where the condition $\int_{\Omega} f(x)\sw(x) \d \sm(x) =0$ is required in the left-hand inequality. 
\end{prop}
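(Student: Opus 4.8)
The plan is to recognize the Poisson integrals of \eqref{eq:Poisson} as a symmetric diffusion semigroup and then invoke the Littlewood--Paley theory for such semigroups. Setting $r = e^{-s}$ and writing $T_s := Q_{e^{-s}}(\sw)$, formula \eqref{eq:Poisson} gives $T_s f = \sum_{n=0}^\infty e^{-ns}\proj_n(\sw; f)$, so that $T_sT_{s'} = T_{s+s'}$ and $T_0 = I$. By \thmref{thm:poisson} the family $\{T_s\}_{s\ge 0}$ is positivity preserving, self-adjoint on $L^2(\Omega,\sw)$, a contraction on every $L^p(\Omega,\sw)$, conservative in the sense $T_s 1 = 1$, and strongly continuous as $s \to 0{+}$; hence it is a symmetric diffusion semigroup in the sense of Stein. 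Introduce the associated Stein $g$-function $\wt g(f)(x) := \big(\int_0^\infty s\,|\partial_s T_s f(x)|^2 \d s\big)^{1/2}$. Under the change of variables one computes $g(f)(x)^2 = \int_0^\infty (e^s-1)\,|\partial_s T_s f(x)|^2\,\d s$, so the two $g$-functions differ only through the weight $e^s - 1$ versus $s$. Since $e^s - 1 \ge s$ we have $\wt g(f) \le g(f)$ pointwise, while on $s \ge 1$ the derivative $\partial_s T_s f$ decays because of the spectral gap $n \ge 1$, so that $\|g(f)\|_{p,\sw}\sim\|\wt g(f)\|_{p,\sw}$; it therefore suffices to work with $\wt g$.

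For the upper bound I would apply Stein's theorem for symmetric diffusion semigroups \cite[Ch.~IV]{Stein}, giving $\|\wt g(f)\|_{p,\sw}\le c_p\|f\|_{p,\sw}$ for $1<p<\infty$. The $L^2$ case is immediate and instructive: by the orthogonality of the projections and Parseval's identity,
\begin{equation*}
 \|g(f)\|_{2,\sw}^2 = \sum_{n=1}^\infty n^2\Big(\int_0^1 (1-r)\,r^{2n-2}\,\d r\Big)\|\proj_n(\sw;f)\|_{2,\sw}^2 = \sum_{n=1}^\infty \frac{n}{2(2n-1)}\,\|\proj_n(\sw;f)\|_{2,\sw}^2,
\end{equation*}
and since $\tfrac{n}{2(2n-1)}\in[\tfrac14,\tfrac12]$ for $n\ge 1$, both inequalities in \eqref{eq:g(f)bound} hold in $L^2$ up to the constant frequency $\proj_0$. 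The passage to general $p$ rests on Stein's machinery: the maximal theorem for the semigroup (via the Hopf--Dunford--Schwarz ergodic theorem together with \thmref{thm:FeSt}) and the subordination of $T_s$ to a heat-type semigroup.

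For the lower bound I would argue by duality, and here the mean-zero hypothesis enters decisively. In the $s$-variable the polarization of $\wt g$ is exact on mean-zero functions: for $f,h$ with $\int_\Omega f\,\sw\,\d\sm = \int_\Omega h\,\sw\,\d\sm = 0$, orthogonality and self-adjointness give
\begin{equation*}
 \int_0^\infty s\,\langle \partial_s T_s f,\partial_s T_s h\rangle_\sw\,\d s = \sum_{n=1}^\infty n^2\Big(\int_0^\infty s\,e^{-2ns}\,\d s\Big)\langle \proj_n(\sw;f),\proj_n(\sw;h)\rangle_\sw = \tfrac14\,\langle f,h\rangle_\sw,
\end{equation*}
the constant term $\proj_0$ being annihilated precisely because $f$ and $h$ have mean zero. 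Applying the Cauchy--Schwarz inequality in $(s,\d s)$ pointwise and then Hölder's inequality in $x$ bounds $|\langle f,h\rangle_\sw|$ by $c\,\|\wt g(f)\|_{p,\sw}\,\|\wt g(h)\|_{p',\sw}$; combining with the upper bound $\|\wt g(h)\|_{p',\sw}\le c\|h\|_{p',\sw}$ and taking the supremum over $\|h\|_{p',\sw}\le 1$ yields $\|f\|_{p,\sw}\le c_p\|\wt g(f)\|_{p,\sw}\le c_p\|g(f)\|_{p,\sw}$, the final step using $\wt g\le g$.

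The main obstacle will be the $L^p$ upper bound: the $L^2$ identity and the duality step are essentially algebraic once the semigroup structure is identified, whereas controlling the $g$-function on $L^p$ requires the full Littlewood--Paley apparatus of Stein's theory, adapted so that only the abstract properties recorded in \thmref{thm:poisson} are used. A secondary but genuine technical point is reconciling the weight $(1-r)$ with Stein's weight $s\,\d s$ and controlling the contribution of $r$ near $0$ (equivalently $s$ large), which is handled by the exponential decay of $\partial_s T_s f$ on the orthogonal complement of the constants.
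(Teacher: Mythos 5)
Your proposal follows essentially the same route as the paper: both identify $T_s = Q_{e^{-s}}(\sw;\cdot)$ as a symmetric diffusion semigroup via the first four parts of Theorem~\ref{thm:poisson}, invoke Stein's semigroup Littlewood--Paley theorem for the canonical $g$-function with weight $s\,\d s$ (equivalently $r|\log r|\,\d r$), and transfer to the weight $(1-r)\,\d r$ by using $s\le e^s-1$ for the lower bound and by estimating the large-$s$ (i.e.\ $r$ near $0$) contribution separately for the upper bound, where your appeal to the ``spectral gap'' must be supplemented, exactly as in the paper, by the bound $\|\proj_n(\sw;f)\|_{p,\sw}\le c\,n^{\a+1}\|f\|_{p,\sw}$ coming from \eqref{eq:proj=f*g} and Young's inequality. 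The only real difference is that you re-derive the lower bound by polarization and duality rather than quoting the two-sided bound from Stein's theorem as the paper does; both are sound.
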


\begin{proof}
Let $T^t$ be the operator defined by $T^t f = Q_{e^{-t}}(\sw; f)$. By \eqref{eq:Poisson}, it is easy
to see that $T^t$ satisfies $T^{t_1+t_2} = T^{t_1}T^{t_2}$. By the first four properties of 
Theorem \ref{thm:poisson}, $\{T^t\}$ is a symmetric semigroup of operators. Hence, by 
\cite[Theorem 10, p.111]{Stein}, the Littlewood-Paley $g$-function 
$$
    \tilde g(f):=  \left( \int_0^\infty \left | \frac{\d}{\d t}T^t f \right |^2 t \d t \right)^{\f12}
        =  \left( \int_0^1  \left | \frac{\d}{\d r} Q_r(\sw; f) \right |^2 r |\log r| \d r \right)^{\f12}
$$
satisfies $c_p^{-1} \|f\|_{p,\sw} \le \|\tilde g(f) \|_{p,\sw} \le c_p \|f\|_{p,\sw}$. Since $r |\log r| \le c(1-r)$ on $[0,1]$, 
we obtain $\tilde g(f) (x) \le g(f)(x)$, which implies immediately the left-hand inequality in \eqref{eq:g(f)bound}. 
To prove the right-hand inequality, we split the integral of $g(f)$ into two parts, so that $g(f) \le g_1(f) + g_2(f)$, 
where $g_1(f)$ has the integral over $[0,\f12]$ and $g_2(f)$ has the integral over $[\f12,1]$. For $g_1(f)$, we use 
$\|\proj_n (\sw; f)\|_{p, \sw} \le Z_n^{(\a,\b)}(1) \|f\|_{\sw, p} \le c n^{\a+1} \|f\|_{\sw, p}$, which follows from 
\eqref{eq:proj=f*g} and \eqref{eq:Young}, to obtain
$$
  \|g_1(f)|_{p,\sw} \le \sup_{0 < r \le \f12} \left \| \frac{\d}{\d r} Q_r(\sw; f) \right \|_{p,\sw} 
    \le \sum_{n=1}^\infty n 2^{-n+1} \|\proj_n (\sw; f)\|_{p, \sw} 
    \le c \| f\|_{p, \sw}.
$$ 
For $g_2(f)$, we observe $r|\log r| \sim 1-r$ on $[\f12,1]$, so that $g_2(f)(x) \le \tilde g(f) (x)$ and, consequently, 
the boundedness of $g_2$ follows from that of $\tilde g(f)$. 
\end{proof}

The above proof is a straightforward extension of the version for the unit sphere in \cite{BC}; see \cite[Thm. 3.2.1]{DaiX}
since the weight and the domain does not play an essential role in the proof. This will be the case for the most part
in the proof of Theorem \ref{thm:multiplier} below and we shall be brief.

To simplify the notation, we shall use $Q_r f = Q_r(\sw; f)$ and $S_n^\delta f = S_n^\delta(\sw; f)$ for the Poisson 
integral and the $(C,\delta)$ means in the rest of this subsection. Using the orthogonality, it is easy to verify that 
$Q_r S_n^\delta = S_n^\delta Q_r$ for $n \ge 0$. We will need several identities on these means that hold in fact
for the the Poisson sum and the $(C,\delta)$ means of any given sequence. We call such identities algebraic and 
take them as granted. All these identities can be found in \cite{BC, DaiX}. The first such identity is 
$$
 \frac{\d}{\d r} Q_r f = - (1-r)^\delta \sum_{n=1}^\infty (n+\delta+1)\binom{n+\delta}{n} 
  \left ( S_n^{\delta +1}f - S_n^\delta f \right)r^{n-1};
$$
see, for example, \cite[(3.2.4) and (3.2.5)]{DaiX}. This identity implies that $g(f)$ is related to another $g$-function
defined in terms of the $(C, \delta)$ means. For $\delta \ge 0$, define
$$
  g_\delta (f) := \left(\sum_{n=1}^\infty  \frac{1}{n} \left|S_{n}^{\delta+1}(\sw; f) - S_n^\delta(\sw; f)\right|^2\right)^{\f12}.
$$ 
Then the aforementioned identity implies that $g(f) \le c g_\delta(f)$ and, consequently, the proposition below. 

\begin{prop} \label{prop:gd-g}
If $f\in L^p(\Omega,\sw)$ satisfying $\int_\Omega f(x) \sw(x) \d \sm(x) =0$, then
\begin{equation} \label{eq:gd-g}
  \|f\|_{p,\sw} \le c \|g_\delta(f)\|_{p,\sw}, \qquad 1 < p < \infty. 
\end{equation}
\end{prop}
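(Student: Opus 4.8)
The plan is to combine the already-established lower bound from the previous proposition, namely
$$
  c_p^{-1} \|f\|_{p,\sw} \le \|g(f)\|_{p,\sw},
$$
valid under the mean-zero hypothesis $\int_\Omega f(x)\sw(x)\,\d\sm(x)=0$, with the pointwise domination $g(f)(x) \le c\, g_\delta(f)(x)$ that the excerpt has just extracted from the algebraic identity for $\frac{\d}{\d r}Q_r f$. Once I have that pointwise inequality in hand, the $L^p$ statement follows by monotonicity of the norm:
$$
  \|f\|_{p,\sw} \le c_p \|g(f)\|_{p,\sw} \le c_p\, c\, \|g_\delta(f)\|_{p,\sw},
$$
which is exactly \eqref{eq:gd-g}. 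So the real content of the proof is justifying $g(f) \le c\, g_\delta(f)$ from the stated derivative identity.

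First I would insert the identity for $\frac{\d}{\d r}Q_r f$ into the definition of $g(f)$, writing
$$
  \left|\frac{\d}{\d r} Q_r f\right| \le (1-r)^\delta \sum_{n=1}^\infty (n+\delta+1)\binom{n+\delta}{n}
    \left|S_n^{\delta+1}f - S_n^\delta f\right| r^{n-1}.
$$
The key step is then to control the $r$-integral $\int_0^1 (1-r)\left|\frac{\d}{\d r}Q_r f\right|^2\,\d r$. I would treat the sum as an inner product and apply Cauchy–Schwarz in the variable $n$, splitting the weight so that one factor rebuilds the $\frac1n$ that appears in $g_\delta(f)$. The natural pairing is to write each summand as $\big[(n+\delta+1)\binom{n+\delta}{n}\sqrt{n}\, r^{n-1}\big]\cdot\big[\tfrac{1}{\sqrt n}|S_n^{\delta+1}f-S_n^\delta f|\big]$, apply Cauchy–Schwarz, and then integrate against $(1-r)^{2\delta+1}\,\d r$. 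The $n$-dependent constants are polynomial of known degree and the resulting $r$-integrals are Beta-type integrals $\int_0^1 (1-r)^{2\delta+1} r^{2n-2}\,\d r$, whose decay in $n$ must exactly cancel the growth of $\big((n+\delta+1)\binom{n+\delta}{n}\big)^2 n$ so that the surviving factor is the bounded quantity $\sum_n \frac1n |S_n^{\delta+1}f - S_n^\delta f|^2 = g_\delta(f)^2$. This bookkeeping is purely a matter of Gamma-function asymptotics, and since the statement asserts only the existence of a constant $c$, I need only that the resulting $n$-sum of coefficients converges, not its exact value.

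The main obstacle I expect is the convergence of the Cauchy–Schwarz weight sum: the factor $(n+\delta+1)\binom{n+\delta}{n}$ grows like $n^{\delta+1}$, so squaring gives $n^{2\delta+2}$, and after multiplying by the Beta integral (which behaves like $n^{-(2\delta+2)}$ for large $n$) one is left with a bounded factor, but the balance is tight and I must choose the split in the Cauchy–Schwarz application so that the residual sum is summable rather than merely bounded uniformly in $r$. The cleanest route is probably to follow the unit-sphere computation in \cite[Thm.~3.2.1]{DaiX} verbatim, since, as the excerpt emphasizes, the identity for $\frac{\d}{\d r}Q_r f$ and the means $S_n^\delta$ are \emph{algebraic} and do not depend on $\Omega$ or $\sw$; the weight and domain enter only through the $L^p$ norm, which is untouched by the pointwise estimate. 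I would therefore state $g(f)(x)\le c\,g_\delta(f)(x)$ as following from that algebraic identity exactly as on the sphere, and conclude \eqref{eq:gd-g} in one line from the previous proposition. The mean-zero hypothesis carries over unchanged because it is used only to invoke the left-hand inequality of \eqref{eq:g(f)bound}.
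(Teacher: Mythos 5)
Your reduction is exactly the paper's own: combine the left-hand inequality of \eqref{eq:g(f)bound} (which is where the mean-zero hypothesis enters) with the pointwise bound $g(f)\le c\,g_\delta(f)$, the latter being treated as a purely algebraic consequence of the derivative identity for $\f{\d}{\d r}Q_r f$ and delegated to \cite{BC,DaiX}. At that structural level the two arguments coincide, and your remark that the weight and domain enter only through the $L^p$ norm is correct.

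The concrete computation you sketch for the pointwise step, however, would fail, and the failure is not a matter of tight bookkeeping. Writing $A_n=(n+\delta+1)\binom{n+\delta}{n}\sim n^{\delta+1}$ and $T_n=S_n^{\delta+1}f-S_n^\delta f$, your split gives
\[
\Big(\sum_n A_n|T_n|r^{n-1}\Big)^2\le\Big(\sum_n A_n^2\,n\,r^{2n-2}\Big)\sum_n\f{|T_n|^2}{n},
\]
so the coefficient sum you must control is $\sum_n A_n^2 n\int_0^1(1-r)^{2\delta+1}r^{2n-2}\,\d r\sim\sum_n n^{2\delta+3}\cdot n^{-(2\delta+2)}=\sum_n n$, which diverges; keeping $r^{n-1}$ in the second factor, or replacing $\sqrt n$ by any power $n^{\a}$, still leaves a divergent sum (at best $\sum_n n^{-1}$). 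Two points are needed to repair this. First, the displayed identity should carry $(1-r)^{\delta+1}$ rather than $(1-r)^{\delta}$: for $\delta=0$ one has $(n+1)(S_n^1f-S_n^0f)=-\sum_{k\le n}k\proj_k f$, and the generating-function computation yields $\f{\d}{\d r}Q_rf=-(1-r)\sum_n(n+1)(S_n^1f-S_n^0f)r^{n-1}$, so take the extra factor of $(1-r)$ into your exponent count. Second, even then no single global Cauchy--Schwarz with power weights closes the estimate; the standard argument (Bonami--Clerc, \cite[Sec.~3.2]{DaiX}) splits the sum into dyadic blocks $2^k\le n<2^{k+1}$, exploits $r^{n-1}\le e^{-c2^{k-j}}$ for $1-r\sim2^{-j}$ and $k>j$, applies Cauchy--Schwarz within each block and then across blocks against the weight $2^{k(\delta+2)}e^{-c\,2^{(k-j)_+}}$, and finally sums over $j$. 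That computation is indeed independent of $\Omega$ and $\sw$, so your instinct to import it verbatim is sound --- but it is a block argument, not the one-shot Cauchy--Schwarz you describe, and your fallback of citing \cite{DaiX} is carrying the entire weight of the proof.
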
 

The inverse inequality of \eqref{prop:gd-g} also holds and it can be established for a perturbation of $g_\delta$.
Let $\{\nu_k\}_{k=1}^\infty$ be a sequence of positive numbers satisfying 
\begin{equation}\label{eq:sum_nu}
     A : = \sup_{n \ge 1} \frac{1}{n} \sum_{k=1}^n \nu_k < \infty. 
\end{equation}
The perturbed $g_\delta$ function is denoted by $g_\delta^*$ and defined by 
$$
 g_\delta^*(f):= \left(\sum_{n=1}^\infty \frac{\nu_n}{n}\left|S_{n}^{\delta+1}(\sw; f) - S_n^\delta(\sw; f)\right|^2\right)^{\f12}.
$$
Evidently, $g_\delta^*(f) = g_\delta(f)$ when $\nu_k=1$ for all $k$. We need the boundedness of $g_\delta^*$. 

\begin{prop}\label{prop:gd*bd}
Assume that Assertion 1 holds for all $\{n_k\} \in \NN$ and $f_k \in L^p(\Omega,\sw)$. Then
\begin{equation}\label{eq:gd*bd}
\|g_\delta^*(f)\|_{p,\sw} \le c_p\|f\|_{p,\sw}, \qquad 1 < p < \infty,
\end{equation}
where $c_p$ is a constant independent of $f$ and $\{\nu_k\}$. 
\end{prop}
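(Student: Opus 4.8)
The plan is to follow the Littlewood--Paley argument of Bonami--Clerc \cite{BC}, with Corollary \ref{cor:Ces-FeSt} (the vector-valued Fefferman--Stein inequality for the Ces\`aro means) as the engine and the averaging hypothesis \eqref{eq:sum_nu} absorbed by a summation by parts. Write $D_n f := S_n^{\delta+1}(\sw;f) - S_n^\delta(\sw;f)$ and $A_n^\delta := \binom{n+\delta}{n}$. The starting point is the algebraic identity $A_n^\delta S_n^\delta f = A_n^{\delta+1}S_n^{\delta+1}f - A_{n-1}^{\delta+1}S_{n-1}^{\delta+1}f$ (an instance of the identities we take as granted), which rearranges to
$$
   D_n f = \f{A_{n-1}^{\delta+1}}{A_n^\delta}\left(S_{n-1}^{\delta+1}f - S_n^{\delta+1}f\right),
   \qquad \f{A_{n-1}^{\delta+1}}{A_n^\delta} \sim n .
$$
Thus $\f1n |D_n f|^2 \sim n\,|S_n^{\delta+1}f - S_{n-1}^{\delta+1}f|^2$, and $g_\delta^*(f)$ is, up to constants, a square function built from consecutive $(C,\delta+1)$ means; the point of this reduction is that the summands are now backward differences of Ces\`aro means of a single order, which is the form to which the vector-valued bound can be fed.

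First I would settle the case $p=2$, since it exhibits precisely where \eqref{eq:sum_nu} is used. By orthogonality, $\|g_\delta^*(f)\|_{2,\sw}^2 = \sum_k \|\proj_k(\sw;f)\|_{2,\sw}^2\, \Lambda_k$ with $\Lambda_k = \sum_{n\ge k}\f{\nu_n}{n}|m_{n,k}|^2$, where $m_{n,k} = \f{A_{n-k}^{\delta+1}}{A_n^{\delta+1}} - \f{A_{n-k}^\delta}{A_n^\delta} = O\!\big(\f kn(1-\f kn)^\delta\big)$ are the Ces\`aro difference multipliers. Setting $V_n := \sum_{j=1}^n \nu_j \le A n$ and summing by parts against the essentially decreasing coefficients $\f1n|m_{n,k}|^2$ gives $\Lambda_k \le c\,A$ uniformly in $k$, hence $\|g_\delta^*(f)\|_{2,\sw}^2 \le c\,A\,\|f\|_{2,\sw}^2$; this is the exact mechanism by which the average bound \eqref{eq:sum_nu}, rather than the useless pointwise bound $\nu_n\le An$, enters. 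For general $1<p<\infty$ I would pass to a vector-valued estimate: group the indices $n$ into dyadic blocks $I_j=\{2^j,\dots,2^{j+1}-1\}$, perform the same summation by parts inside each block to replace the weights $\nu_n$ by their averages $V_n/n\le A$, and organize the differences $D_n f$ into an $\ell^2$-family of Ces\`aro means to which Corollary \ref{cor:Ces-FeSt} applies. Combined with the Fefferman--Stein inequality (Theorem \ref{thm:FeSt}), this should yield $\|g_\delta^*(f)\|_{p,\sw}\le c_p\sqrt{A}\,\|f\|_{p,\sw}$ with $c_p$ depending only on $p$, $\delta$ and the doubling constant, but not on $\{\nu_k\}$, as required.

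The main obstacle is the general-$p$ step, specifically reconciling the weight condition \eqref{eq:sum_nu} with the vector-valued inequality \emph{without} destroying cancellation. The naive route---bounding each dyadic block by $\big(\sum_{n\in I_j}\nu_n\big)\max_{n\in I_j}|D_n f|^2$ and then controlling $\max_{n\in I_j}|D_n f|$ by the maximal Ces\`aro operator $S_*^\delta(\sw;f)$---fails, because the blockwise maxima do not decay in $j$ and their square sum over $j$ diverges; the boundedness of the square function rests on the $\ell^2$-orthogonality among the $D_n f$, which passing to a maximum erases. The delicate point is therefore to carry out the summation by parts so that $\nu_n$ is replaced by the average $V_n/n\le A$ while the family $\{D_n f\}$ is kept intact as an argument of the vector-valued Ces\`aro bound of Corollary \ref{cor:Ces-FeSt}. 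Everything else---the asymptotics $A_{n-1}^{\delta+1}/A_n^\delta\sim n$ and $m_{n,k}=O(\f kn(1-\f kn)^\delta)$, the orthogonality used in the $p=2$ computation, and the elementary Abel summation---is routine once this interface is set up correctly.
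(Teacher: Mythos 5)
Your reduction $S_n^{\delta+1}f-S_n^\delta f=\f{A_{n-1}^{\delta+1}}{A_n^\delta}\big(S_{n-1}^{\delta+1}f-S_n^{\delta+1}f\big)$ and the $p=2$ computation via orthogonality and Abel summation against $V_n\le An$ are fine, but the general-$p$ step is precisely where the proof lives, and as written it is not there. The obstacle you yourself flag in the last paragraph is the gap: Corollary \ref{cor:Ces-FeSt} requires on its right-hand side an $\ell^2$-family $\{f_k\}$ with $\|(\sum_k|f_k|^2)^{1/2}\|_{p,\sw}\lesssim\|f\|_{p,\sw}$, and neither $f_k=f$ nor blockwise maxima produce one. ``Carry out the summation by parts so that the family $\{D_nf\}$ is kept intact'' is a description of what is needed, not a construction; no Abel summation in the index $n$ alone can manufacture the required $\ell^2$-structure, which lives in the function variable rather than the index variable.

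The missing ingredient is the Poisson semigroup. The paper (following Bonami--Clerc) bounds $g_\delta^*(f)$ by the $g$-function $g(f)=\big(\int_0^1(1-r)|\f{\d}{\d r}Q_r(\sw;f)|^2\,\d r\big)^{1/2}$, whose $L^p$ bound \eqref{eq:g(f)bound} comes from Stein's semigroup Littlewood--Paley theory; that is the object that supplies the admissible right-hand side. Concretely, the algebraic identities \eqref{eq:SnQr2} and \eqref{eq:SnQr3} convert $S_n^{\delta+1}f-S_n^\delta f$ into terms $S_j^\delta\big(Q_{r_n}(D_Nf)\big)$ (with the paper's $D_N$, a resampled derivative of the Poisson integral --- note your $D_n$ clashes with this notation); the vector-valued bound \eqref{eq:lemSQ}, which is where Assertion 1 and Corollary \ref{cor:Ces-FeSt} actually enter via a Riemann-sum argument, replaces these by averages of $|Q_r(D_Nf)|^2$ over intervals $I_n=[r_n,r_{n+1}]$; and the pointwise bound \eqref{eq:QrM} together with the Fefferman--Stein inequality (Theorem \ref{thm:FeSt}) lands on $g(f)$. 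Crucially, the sequence $\{\nu_k\}$ enters not through summation by parts at all but through the choice of radii $r_n=1-1/\mu_n$ with $\mu_n=1+\sum_{j<n}\nu_j$, which yields $\nu_n/|I_n|=\mu_n\mu_{n+1}\sim n^2$ and lets the weights be absorbed into the measure $(1-r)\,\d r$ of the $g$-function. Your proposal contains none of this chain, so beyond $p=2$ the proposition is not proved.
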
 

The proof of this proposition is long. We trace its steps below and indicate where Assertion 1 is needed. 
Several algebraic identities that involve $Q_r f$ and $S_n^\delta f$ will be need. The first one is 
(\cite{BC} and \cite[(3.2.11)]{DaiX})
$$
  S_n^\delta(Q_r f) = Q_r(S_n^\delta f) = \sum_{j=0}^n b_{j,n}^\delta(r) S_j^\delta f, \qquad \hbox{where} \quad 
    \sum_{j=0}^n |b_{j,n}^\delta(r)| \le c_\delta, 
$$
and $b_{j,n}^\delta(r)$ is a polynomial in $r$ that can be given explicitly, from which the estimate of the sum
follows. By Assertion 1 and Corollary \ref{cor:Ces-FeSt}, this identity allows us to conclude 
\begin{equation}\label{eq:SkQr1}
  \left \| \left( \sum_{k=1}^\infty \left| S_k^\delta Q_{r_k} f_k \right|^2\right)^{\f12} \right\|_{p,\sw} 
    \le c_p  \left \| \left( \sum_{k=1}^\infty  \left| f_k \right|^2 \right)^{\f12} \right\|_{p,\sw}, 
\end{equation}
where $r_k \in (0,1)$. Let $I_k$ be a subinterval of $[r_k,1)$ and let $|I_k|$ be the length of $I_k$. 
Define $\{r_{k,i} : 0 \le i \le 2^n\}$ by $r_{k,i} - r_{k,i-1} = 2^{-n} | I_k|$. Since the Poisson sum satsfies 
$Q_{r_k} f = Q_{r_k/r_{k,i}}Q_{r_{k,i}} f$, we can write
$$
    \left |S_k^\delta Q_{r_k} f_k \right|^2 = \frac{1}{2^n} \sum_{i=1}^{2^n} \left| S_k^\delta Q_{r_k/r_{k,i}} (Q_{r_{k,i}} f)\right|^2.
$$
Using this identity first and then applying \eqref{eq:SkQr1}, we conclude that 
$$
  \left \| \left( \sum_{k=1}^\infty \left| S_k^\delta Q_{r_k} f_k \right|^2\right)^{\f12} \right\|_{p,\sw} 
   \le  c_p  \left \|  \sum_{k=1}^\infty  \left(  \frac{1}{2^n} \sum_{i=1}^{2^n} 
        \left| Q_{r_{k,i}} f \right|^2 \right)^{\f12} \right\|_{p,\sw}.  
$$
The inner sum in the right-hand side is a Riemann sum of the integral $\frac{1}{|I_k|} \int_{I_k} |Q_{r_k} f|^2 \d r$.
Hence, passing to the limit $n\to \infty$, we obtain the following lemma. 

\begin{lem}
Assume that $1 < p < \infty$ and Assertion 1 holds. If $r_j \in (0,1)$ and $I_j$ is a subinterval of $[r_j,1)$ 
for $j= 1,2\ldots$, then the inequality 
\begin{equation}\label{eq:lemSQ}
  \left \| \left( \sum_{k=1}^\infty \left| S_k^\delta Q_{r_k} f_k \right|^2\right)^{\f12} \right\|_{p,\sw} 
    \le c_p \left \| \left( \sum_{k=1}^\infty \frac{1}{|I_k|}\int_{I_k} \left| Q_{r} f_k \right|^2 \d r \right)^{\f12} \right\|_{p,\sw}.
\end{equation}
\end{lem}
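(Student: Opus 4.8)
The plan is to deduce the stated inequality from the vector-valued bound \eqref{eq:SkQr1}, which already encapsulates the substantive harmonic analysis (Assertion~1 combined with the Fefferman--Stein inequality via Corollary~\ref{cor:Ces-FeSt}); the lemma itself is then a soft consequence obtained by an averaging trick and a passage to the limit. The decisive structural feature is that \eqref{eq:SkQr1} holds with a constant $c_p$ that is \emph{uniform} in the sequence of Ces\`aro orders, in the radii (each lying in $(0,1)$), and in the number of functions. This uniformity is exactly what permits me to substitute, in place of the single sequence $\{f_k\}$, a much larger doubly-indexed family built from the Poisson integrals $Q_{r_{k,i}} f_k$.

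First I would fix $n$ and partition each $I_k$ into $2^n$ subintervals of equal length with endpoints $r_{k,0}<r_{k,1}<\cdots<r_{k,2^n}$, so that $r_{k,i}-r_{k,i-1}=2^{-n}|I_k|$ and every $r_{k,i}\ge r_k$. The semigroup property of the Poisson integral gives $Q_{r_k}=Q_{r_k/r_{k,i}}Q_{r_{k,i}}$ with $r_k/r_{k,i}\in(0,1)$, whence $S_k^\delta Q_{r_k}f_k=S_k^\delta Q_{r_k/r_{k,i}}\big(Q_{r_{k,i}}f_k\big)$ for each $i$, and averaging over $i$ yields $|S_k^\delta Q_{r_k}f_k|^2=2^{-n}\sum_{i=1}^{2^n}\big|S_k^\delta Q_{r_k/r_{k,i}}(Q_{r_{k,i}}f_k)\big|^2$. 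I would then treat the pairs $(k,i)$ as a single index, assigning Ces\`aro order $k$, radius $r_k/r_{k,i}\in(0,1)$, and input function $2^{-n/2}Q_{r_{k,i}}f_k$, and apply \eqref{eq:SkQr1} to this reindexed family. Since its left-hand square function is precisely $\big(\sum_k|S_k^\delta Q_{r_k}f_k|^2\big)^{\f12}$, this produces, with $c_p$ independent of $n$,
$$
  \left \| \left( \sum_{k=1}^\infty \left| S_k^\delta Q_{r_k} f_k \right|^2\right)^{\f12} \right\|_{p,\sw}
   \le  c_p  \left \| \left( \sum_{k=1}^\infty  \frac{1}{2^n} \sum_{i=1}^{2^n}
        \left| Q_{r_{k,i}} f_k \right|^2 \right)^{\f12} \right\|_{p,\sw}.
$$

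Finally I would identify the inner average $2^{-n}\sum_{i=1}^{2^n}|Q_{r_{k,i}}f_k|^2$ as a Riemann sum for $\frac{1}{|I_k|}\int_{I_k}|Q_r f_k|^2\,\d r$ and let $n\to\infty$. Since the displayed inequality holds for every $n$ with the same $c_p$, writing $a_n$ for its right-hand norm and $b$ for the target norm in \eqref{eq:lemSQ}, the left-hand side is $\le c_p a_n$ for all $n$, so it suffices to show $\limsup_n a_n\le b$. This is the one point demanding genuine care, and it is where the main obstacle lies: Fatou's lemma alone gives only $b\le\liminf_n a_n$, the wrong direction. I would instead invoke dominated convergence. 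For a.e.\ $x$ the continuity of $r\mapsto Q_r f_k(x)$ makes the Riemann sums converge to the integral, while each summand is dominated by $\big(\sup_{r\in \overline{I_k}}|Q_r f_k(x)|\big)^2$, a quantity bounded by a multiple of the Hardy--Littlewood maximal function $M_\sw f_k$; summing in $k$ and applying the Fefferman--Stein inequality (Theorem~\ref{thm:FeSt}) produces a fixed $L^{p/2}(\Omega,\sw)$ majorant. Dominated convergence then forces $a_n\to b$, which yields \eqref{eq:lemSQ}. Thus the only real difficulty is this interchange of limit and norm; the rest is bookkeeping on top of the uniform bound \eqref{eq:SkQr1}.
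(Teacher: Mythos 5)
Your argument is the same as the paper's: partition each $I_k$ into $2^n$ equal pieces, use the semigroup identity $Q_{r_k}=Q_{r_k/r_{k,i}}Q_{r_{k,i}}$ to average $|S_k^\delta Q_{r_k}f_k|^2$ over the partition points, apply the uniform vector-valued bound \eqref{eq:SkQr1} to the reindexed family, and recognize the inner average as a Riemann sum for $\frac{1}{|I_k|}\int_{I_k}|Q_rf_k|^2\,\d r$ before letting $n\to\infty$. The only difference is that you supply a justification (domination by the maximal function plus Fefferman--Stein, then dominated convergence) for the passage to the limit, which the paper states without comment; this is a correct and welcome elaboration rather than a different route.
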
 

Let $\eta \in C^\infty(\RR)$ be a cut-off function such that $\eta(t) =1$ for $|t| \le 1$ and $\eta(t) = 0$ for
$|t| \ge 2$. For $n =1,2,\ldots$, define 
$$
  L_n f = \sum_{k=0}^{2N} \eta\left(\f k n \right) \proj_k f \quad \hbox{and} \quad
  D_n f = \sum_{k=0}^{2N} \eta\left(\f k n \right)k \proj_k f.
$$
Then $L_n f$ is a resampling of $f$ and $D_n$ is a resampling of $\frac{\d}{\d r} Q_r f$ when $r=1$. The
Assertion 1 is needed to prove the following lemma. 

\begin{lem}
Let $\delta_0$ be the index in Assertion 1. Then
\begin{equation} \label{eq:QrM}
  \left | Q_r (D_N f) \right |  \le c\, M_\sw \left(\f{\d}{\d r} Q_r f\right).
\end{equation}
\end{lem}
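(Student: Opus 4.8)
The plan is to deduce \eqref{eq:QrM} from a single $N$-uniform pointwise estimate on the operator $L_N$ and then to feed it into Assertion 1 through the Ces\`aro means. First I would record the identity that links $D_N$ to $L_N$. Since each $\proj_k$ commutes with $Q_r$ and $\f{\d}{\d r}Q_r f = \sum_{k\ge 1} k\, r^{k-1}\proj_k f$ is an orthogonal expansion (so that $\proj_k\big(\f{\d}{\d r}Q_r f\big)= k\, r^{k-1}\proj_k f$), one computes
$$
  Q_r(D_N f) = \sum_{k=0}^{2N}\eta\Big(\f k N\Big)\, k\, r^k\,\proj_k f = r\, L_N\Big(\f{\d}{\d r}Q_r f\Big).
$$
Because $0\le r<1$, it therefore suffices to prove the $r$-free bound $|L_N g(x)|\le c\,M_\sw g(x)$, uniformly in $N$, and to apply it with $g=\f{\d}{\d r}Q_r f$; for fixed $r<1$ this $g$ is the sum of a geometrically convergent series of polynomials, hence lies in $L^1(\Omega,\sw)$, so Assertion 1 is available.

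Next I would expand $L_N$ as a rapidly terminating superposition of Ces\`aro means. Fix an integer $\delta>\delta_0$ and abbreviate $\eta_N(k):=\eta(k/N)$ and $A_n^\delta:=\binom{n+\delta}{n}$, so that $A_n^\delta S_n^\delta(\sw;g)=\sum_{k=0}^n A_{n-k}^\delta\proj_k g$. Using the inversion identity $\sum_{j\ge 0}A_j^\delta\,\triangle^{\delta+1}\eta_N(k+j)=\eta_N(k)$ (summation by parts applied $\delta+1$ times, valid because $\eta_N$ is finitely supported) and interchanging the order of summation gives
$$
  L_N g=\sum_{n=0}^{2N} A_n^\delta\,\triangle^{\delta+1}\eta_N(n)\,S_n^\delta(\sw;g).
$$

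Finally I would estimate this sum. By Assertion 1, $|S_n^\delta(\sw;g,x)|\le S_*^\delta(\sw;g,x)\le c\,M_\sw g(x)$ since $\delta>\delta_0$, whence
$$
  |L_N g(x)|\le c\,M_\sw g(x)\,\sum_{n=0}^{2N} A_n^\delta\,\big|\triangle^{\delta+1}\eta_N(n)\big|.
$$
The crux is the $N$-uniform bound $\sum_n A_n^\delta|\triangle^{\delta+1}\eta_N(n)|\le c$, and this is where the smoothness of $\eta$ is used: because $\eta\equiv 1$ on $[0,1]$ and $\eta\equiv 0$ on $[2,\infty)$, the difference $\triangle^{\delta+1}\eta_N(n)$ vanishes unless $n\in[N-\delta-1,2N]$, while on that range $|\triangle^{\delta+1}\eta_N(n)|\le c\,N^{-\delta-1}\|\eta^{(\delta+1)}\|_\infty$ and $A_n^\delta\sim n^\delta\sim N^\delta$; the $O(N)$ surviving terms then sum to a constant independent of $N$. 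Combining the three displays with $r<1$ yields \eqref{eq:QrM}. The main obstacle is precisely this last coefficient estimate---controlling the $(\delta+1)$-st differences of the smooth cut-off against the weights $A_n^\delta$---together with the correct bookkeeping in the summation by parts; the passage from $S_n^\delta$ to $M_\sw$ via Assertion 1 is the other indispensable ingredient.
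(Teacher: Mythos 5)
Your proposal is correct and follows essentially the same route as the paper: the identity $Q_r(D_Nf)=rL_N\big(\tfrac{\d}{\d r}Q_rf\big)$ combined with a uniform bound $\sup_N|L_Ng|\le c\,M_\sw g$ obtained by summation by parts against the Ces\`aro means and Assertion 1. The paper merely cites this as a ``standard procedure of summation by parts $\lfloor\delta_0+1\rfloor$ times''; your write-up supplies exactly the details (the inversion identity, the support and size of $\triangle^{\delta+1}\eta_N$, and the $N$-uniform coefficient sum) that the paper leaves implicit.
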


\begin{proof}
The operator $L_n f$ is highly localized.  A standard procedure of summation by parts $\lfloor \delta_0 +1\rfloor$ 
times yields
$$
   \sup_{N \ge 0} \left|L_N f(x) \right| \le c  \sup_{N \ge 0} \left |S_N^{\lfloor \delta_0 +1\rfloor}f(x)\right | \le c \, M_\sw f(x)
$$ 
where the last step follows by Assertion 1. The identity $Q_r (D_N f) = r L_N( \f{\d}{\d r} Q_r f)$ can be easily 
verified, from which \eqref{eq:QrM} follows readily. 
\end{proof}

The next algebraic identity allows us to replace $f$ by $Q_r f$ in the proof. It is given by (cf. \cite[(3.2.16)]{DaiX}),
\begin{align}\label{eq:SnQr2}
 S_n^{\delta+1} f - S_n^\delta f = \,&  r^{-n} \left[ S_n^{\delta+1} (Q_r f) - S_n^\delta (Q_rf) \right] \\
    &  + \sum_{j=1}^{n-1} \frac{j+\delta+1}{n+\delta+1} a_{j,n}^\delta(r) \left[ S_j^{\delta+1} (Q_r f) - S_j^\delta (Q_rf) \right],
    \notag
\end{align}
where the coefficients $a_{j,n}^\delta(r)$ satisfy $\displaystyle{\max_{0\le j \le n-1} |a_{j,n}^\delta(r)| \le c (1-r)}$
with $c$ independent of $n$; see \cite[Lemma 3.2.5]{DaiX}. Let $\nu_k$ be given by \eqref{eq:sum_nu}. 
Considering $1+ A^{-1} \nu_j$ instead of $\nu_j$, if necessary, we can 
assume that $n \le \sum_{j=1}^n \nu_j \le 2n$. Let $\mu_1 = 1$ and $\mu_n = 1 + \sum_{j=1}^{n-1} \nu_j$ for $n >1$.
Then $r_n := 1 - \frac1 {\mu_n}$ satisfies $1- \f1 n  \le r_n \le 1- \f1 {2n-1}$. In particular, for $r = r_n$, 
$|a_{j,n}^\delta(r_n)| \le c(1-r_n) \le c n^{-1}$. Hence, by \eqref{eq:SnQr2},
$$
 \left| S_n^{\delta+1} f - S_n^\delta f\right|  \le  c \left | S_n^{\delta+1} f_n - S_n^\delta f_n \right |
     + c n^{-2} \sum_{j=1}^{n-1} j \left | S_j^{\delta+1} f_n - S_j^\delta f_n\right |.
$$
We need a further algebraic identity that states \cite[(3.2.19)]{DaiX}, for $0 \le j \le N$, 
\begin{equation} \label{eq:SnQr3}
   S_j^{\delta+1} (Q_r f) - S_j^\delta (Q_r f) = \frac{-1}{j+\delta+1} r  S_j^\delta  \big(Q_{r} (D_N f)\big).
\end{equation}
Using this identity and the Cauchy-Schwarz inequality, it follows that 
\begin{align*}
 \left| S_n^{\delta+1} f - S_n^\delta f\right|^2  \le c n^{-2} \left| S_n^{\delta} Q_{r_n} (D_N f)\right|^2+ 
    c n^{-3} \sum_{j=1}^{n-1} \left | S_j^{\delta} Q_{r_n} (D_N f) \right |^2. 
\end{align*}
Consequently, applying \eqref{eq:lemSQ}, we obtain
\begin{equation*}
 \left \| \left(\sum_{n=1}^N \frac{\nu_n}{n} \left| S_n^{\delta+1} f - S_n^\delta f \right |^2 \right)^{\f12} \right\|_{p,\sw}
  \le c  \left \| \left(\sum_{n=1}^N \frac{\nu_n}{n^3}
        \frac{1}{|I_n|} \int_{r_n}^{r_{n+1}} \left|Q_r (D_N f) \right |^2 \right)^{\f12} \right\|_{p,\sw},
\end{equation*}
where $|I_n| = r_{n+1} - r_n$. In particular, $\nu_n / |I_n| =  \mu_n \mu_{n+1} \sim n^2$ and $n^{-1} < 1-r$ for 
$r_n < r<r_{n+1}$. Hence, applying \eqref{eq:QrM}, the right-hand side of the displayed inequality is further 
bounded by 
\begin{align*}
& c \left \| \left(\sum_{n=1}^N  \frac{1}{n} \int_{r_n}^{r_{n+1}}  
      \left|M_\sw \left( \f{\sd}{\sd r} Q_r f\right) \right |^2 \d r \right)^{\f12} \right\|_{p,\sw} \\
    & \qquad \qquad \le  c  \left \| \left(\sum_{n=1}^N   \int_{r_n}^{r_{n+1}} (1-r) 
      \left|  \f{\sd}{\sd r} Q_r f \right |^2 \d r \right)^{\f12} \right\|_{p,\sw} 
       \le c \| g(f)\|_{p,\sw},
\end{align*}
where the second step follows from applying the Fefferman-Stein inequality to the Riemann sum of the integral
over $[r_n,r_{n+1}]$. Taking the limit $N\to \infty$ proves the inequality \eqref{eq:gd*bd} and 
Proposition \ref{prop:gd*bd}. In particular, this shows that the proof for the unit sphere carries over to the 
general setting when $(\Omega,\sw)$ admits an addition formula and that the Assertion 1 holds. 

The proof of the multiplier theorem, Theorem \ref{thm:multiplier}, follows form Proposition \ref{prop:gd-g} and 
Proposition \ref{prop:gd*bd} without further complication. Indeed, let $F = \sum_{n=0}^\infty \mu_n \proj_n f$;
then $\|F\| \le g_\delta(F)$ by \eqref{eq:gd-g}, so that it is sufficient to prove 
$$
g_\delta(F) \le g_\delta^*(f) =
    \left(\sum_{n=1}^\infty \frac{\nu_n}{n}\left|S_{n}^{\delta+1}(\sw; f) - S_n^\delta(\sw; f)\right|^2\right)^{\f12}
$$
for some sequence $\{\nu_n\}$ of positive numbers satisfying \eqref{eq:sum_nu}. The proof of this inequality
is entirely algebraic. Hence, Theorem \ref{thm:multiplier} follows as in the case of the unit sphere; see
\cite{BC} and \cite[Section 3.3]{DaiX}.

\subsection{Maximal functions}
The Assertion 1 states that the maximal $(C,\delta)$ operator is bounded by the Hardy-Littlewood maximal
function $M_\sw$. We discuss another maximal function defined via the convolution operator $f*_\sw g$ in
Definition \ref{defn:7convol}, which is useful for studying operators defined via convolution. Let $\chi_E$ 
denote the characteristic function of the set $E$. 

\begin{defn}\label{defn:CMw}
Assume $\sw$ admits the addition formula \eqref{eq:7Pn}. For $f \in L^1(\Omega, \sw)$ and $0 \le \t \le \pi$, define
\begin{equation} \label{eq:CMw}
    \CM_\sw f(x)  := \sup_{0\le \t\le \pi} \frac{ (f*_\sw \chi_{[\cos\t,1]}) (x)} 
         { {\int_0^\t w_{\a,\b}(\cos \phi) \sin\phi \,\d \phi}}.
\end{equation}
\end{defn}

Let $S_{\t,\sw}$ be the generalized translation operator defined in \eqref{eq:Stheta}. By Proposition \ref{prop:Stheta},
we can also write
$$
  \CM_\sw f(x) = \sup_{0\le \t \le \pi} \frac{ \int_0^\t S_{\t, \sw} f(x) w_{\a,\b}(\cos \phi) \sin\phi \,\d \phi}
      {\int_0^\t w_{\a,\b}(\cos \phi) \sin\phi \,\d \phi}.
$$

For the unit sphere $\sph$, let $c(x,\t) = \{y: \la x,y\ra =\cos \t\}$ be the spherical cap and let $\d \s$ denote the 
Lebesgue measure on the sphere. Then the maximal function on $\sph$ satisfies \cite[Lemma 3.2]{DaiX}
$$
    M f(x) = \sup_{0\le \t \le \pi} \frac{1}{c(x,\t)} \int_{c(x,\t)} |f(y)| \d \s(y),
$$
which is exactly the Hardy-Littlewood maximal function $M f$ on the unit sphere. This, however, does not hold
in our general setting. 

While the Hardy-Littlewood maximal function relies on the geometric information of the domain, as seen in
the metric, the definition of the $\CM_\sw$ is purely analytical. The latter, however, is natural for studying 
operators defined by the convolution $*_\sw$ on $(\Omega, \sw)$. 

\begin{thm} \label{thm:f*g<M}
Let $g \in L^1([-1,1],w_{\a,\b})$ with $\a \ge \b \ge -\f12$. Assume $k(\t) = g(\cos \t)$ is a continuous, nonnegative, 
and decreasing function on $[0,\pi]$. Then for $f \in L^1(\Omega, \sw)$,
$$
        |f\ast_\sw g(x)| \le c \CM_\sw f(x), \qquad  x \in \Omega,
$$
where $c = \int_0^\pi k(\phi) \sw(\cos \phi) \sin \phi \d \phi$.
\end{thm}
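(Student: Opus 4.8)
The plan is to reduce to nonnegative $f$, rewrite the convolution as a one-dimensional integral against the measure $\d\nu(\phi):=w_{\a,\b}(\cos\phi)\sin\phi\,\d\phi$ on $[0,\pi]$, and then exploit the monotonicity of $k$ through a layer-cake (Abel summation) argument. First, since $k(\t)=g(\cos\t)\ge0$ forces $g\ge0$ on $[-1,1]$ and hence $T^{(\a,\b)}g(x,\cdot)\ge0$, one has $|f\ast_\sw g(x)|\le(|f|\ast_\sw g)(x)$; with the usual maximal-function convention $\CM_\sw f=\CM_\sw|f|$, it therefore suffices to prove $(f\ast_\sw g)(x)\le c\,\CM_\sw f(x)$ for $f\ge0$. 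For such $f$, Proposition~\ref{prop:Stheta}(i) gives
\[
  (f\ast_\sw g)(x)=c_{\a,\b}\int_0^\pi S_{\t,\sw}f(x)\,k(\t)\,\d\nu(\t),
\]
and Proposition~\ref{prop:Stheta}(ii) makes $F(\t):=S_{\t,\sw}f(x)$ nonnegative. Writing $\Phi(\t):=\int_0^\t F\,\d\nu$ and $W(\t):=\int_0^\t\d\nu$, and unwinding $f\ast_\sw\chi_{[\cos\t,1]}$ by Proposition~\ref{prop:Stheta}(i) (note $\chi_{[\cos\t,1]}(\cos\psi)=1$ iff $\psi\le\t$), Definition~\ref{defn:CMw} says exactly that $c_{\a,\b}\Phi(\t)\le\CM_\sw f(x)\,W(\t)$ for every $\t\in[0,\pi]$.

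Next I would encode the monotonicity of $k$ by its layer-cake representation. Because $k$ is continuous, nonnegative and decreasing, each superlevel set $\{k>\lambda\}$ is an interval $[0,s_\lambda)$, so $k(\t)=\int_0^\infty\chi_{[0,s_\lambda)}(\t)\,\d\lambda$. Substituting this into the integral above and applying Fubini's theorem (every integrand is nonnegative) yields
\[
  \int_0^\pi F(\t)\,k(\t)\,\d\nu(\t)=\int_0^\infty\Big(\int_0^{s_\lambda}F\,\d\nu\Big)\,\d\lambda=\int_0^\infty\Phi(s_\lambda)\,\d\lambda.
\]

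Finally I would insert the maximal bound $c_{\a,\b}\Phi(s_\lambda)\le\CM_\sw f(x)\,W(s_\lambda)$ into the last integral to obtain
\[
  c_{\a,\b}\int_0^\pi F\,k\,\d\nu\le\CM_\sw f(x)\int_0^\infty W(s_\lambda)\,\d\lambda.
\]
Running the same layer-cake identity with $F\equiv1$ (so that $\Phi$ is replaced by $W$) identifies $\int_0^\infty W(s_\lambda)\,\d\lambda=\int_0^\pi k(\t)\,\d\nu(\t)$, which is precisely the constant $c$ of the statement; together with the first display this gives $(f\ast_\sw g)(x)\le c\,\CM_\sw f(x)$, the two factors of $c_{\a,\b}$ cancelling exactly. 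The crux is thus the observation that a decreasing $k$ is a nonnegative superposition of exactly the indicator kernels $\chi_{[\cos\t,1]}$ that define $\CM_\sw$; once this is in hand the estimate is immediate. An equivalent route replaces the layer cake by Stieltjes integration by parts, writing $\int k\,\d\Phi$ and integrating by parts twice using $\d k\le0$ and $W(0)=0$. The only points needing care are the Fubini and Stieltjes manipulations for a merely continuous $k$---handled by its monotonicity and continuity, which make the level sets intervals and guarantee measurability---and, should $\d\nu$ be treated through the limiting convention \eqref{eq:limitInt} at $\b=-\tfrac12$, checking that the decomposition and the bound survive that limit.
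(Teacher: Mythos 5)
Your argument is correct and rests on the same core idea as the paper's proof---exploiting the monotonicity of $k$ so that it acts as a nonnegative superposition of the indicator kernels $\chi_{[\cos\t,1]}$ appearing in Definition~\ref{defn:CMw}---but you execute it via the layer-cake formula where the paper uses integration by parts. The paper fixes $x$, sets $F_x(\t)=\int_0^\t S_{\phi,\sw}f(x)\,w_{\a,\b}(\cos\phi)\sin\phi\,\d\phi$, writes $f\ast_\sw g(x)=F_x(\pi)k(\pi)-\int_0^\pi k'(\t)F_x(\t)\,\d\t$, inserts the bound $F_x(\t)\le \CM_\sw f(x)\int_0^\t w_{\a,\b}(\cos\phi)\sin\phi\,\d\phi$ using $k\ge0$ and $k'\le0$, and integrates by parts back to recover the constant $c=\int_0^\pi k(\phi)w_{\a,\b}(\cos\phi)\sin\phi\,\d\phi$. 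Your route---$k(\t)=\int_0^\infty\chi_{\{k>\lambda\}}(\t)\,\d\lambda$ with each superlevel set an interval anchored at $0$, followed by Tonelli and the pointwise bound $c_{\a,\b}\Phi(s_\lambda)\le\CM_\sw f(x)W(s_\lambda)$---reaches the same conclusion and has a small technical advantage: it uses only that $k$ is nonnegative, decreasing and measurable, whereas the paper's computation tacitly assumes $k$ is differentiable (or at least of bounded variation, for the Stieltjes variant you mention), which is not among the stated hypotheses. The remaining ingredients---reduction to $f\ge0$ via positivity of $T^{(\a,\b)}g$, and the identification of $f\ast_\sw\chi_{[\cos\t,1]}$ with $c_{\a,\b}\Phi(\t)$ through Proposition~\ref{prop:Stheta}(i)---coincide with the paper's, and your bookkeeping of the normalization constants is consistent.
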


\begin{proof}
The proof follows from the standard procedure of integration by parts. For fixed $x$, define 
$F_{x} (\t) = \int_0^\t S_{\t,\sw} f(x,t)  \sw (\cos \phi) \sin \phi \d \phi$. Then
\begin{align*}
  f \ast_\sw g(x) \, & =   \int_0^\pi g(\cos \t) S_\t^\l f(x) w_{\a,\b} (\cos \t)\sin \t \d \t \\
          & =   \left[ F_x (\pi) k(\pi) - \int_0^\pi k'(\t) F_x (\t)w_{\a,\b} (\cos \t)\sin \t \d \t \right].
\end{align*}
Since $k(\t)$ is nonnegative and $k'(t) \le 0$, we obtain 
\begin{align*}
 \left| f \ast_\sw g(x)\right| \, & \le  \CM_\sw f(x)  \left[k(\pi) \int_0^\pi k(\phi) w_{\a,\b} (\cos \phi)\sin \phi \d \phi k(\pi) \right.\\
    & \qquad\qquad\qquad \qquad - \left. \int_0^\pi k'(\t) \int_0^\t w_{\a,\b} (\cos \phi)\sin \phi \d \phi \d \t \right] \\
      & \le \CM_\sw f(x) \int_0^\pi k(\phi)  w_{\a,\b} (\cos \phi)\sin \phi \d \phi,
\end{align*}
where the last step follows from another integration by parts. 
\end{proof}

As an example, let us consider the Poisson integral operator. 

\begin{thm}\label{thm:PoissonM}
Assume $\sw$ admits the addition formula \eqref{eq:7Pn} and Assertion 1 holds. Then for $f\in L^1(\Omega; \sw)$ 
and every $x \in \Omega$, 
$$
        \sup_{0< r< 1} \sQ_r(\sw; f, x)  \le c  \CM_\sw f(x).
$$
\end{thm}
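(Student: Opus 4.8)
The plan is to bound the Poisson integral by the maximal function $\CM_\sw$ via Theorem \ref{thm:f*g<M}. Recall from \eqref{eq:Poisson} that $\sQ_r(\sw; f) = f *_\sw q_r^{(\a,\b)}$, where the kernel $q_r^{(\a,\b)}(t) = q_r^{(\a,\b)}(t,1)$ is given by the explicit integral formula \eqref{eq:q_rPoisson}. To apply Theorem \ref{thm:f*g<M} with $g = q_r^{(\a,\b)}$, I must verify its hypotheses: namely that $k_r(\t) := q_r^{(\a,\b)}(\cos \t)$ is continuous, nonnegative, and decreasing on $[0,\pi]$, and then track the resulting constant $c = \int_0^\pi k_r(\phi) \sw(\cos\phi) \sin\phi \, \d\phi$ to ensure it is bounded uniformly in $r$.

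First I would establish monotonicity. Using the substitution $t = \cos\t = 2\cos^2\tfrac{\t}{2} - 1$, the denominator in \eqref{eq:q_rPoisson} becomes $\left(1 - 2\sqrt{r}\cos\tfrac{\t}{2}\, u + r\right)^{\a+\b+2}$, exactly as in the lower bound computation \eqref{eq:poisson-lwbd} used in the proof of Theorem \ref{thm:poisson}. Since $\cos\tfrac{\t}{2}$ is decreasing on $[0,\pi]$ and appears with $u \ge 0$ contributions dominating, the denominator increases with $\t$ for each relevant $u$, so the integrand decreases; nonnegativity is already asserted in the statement following \eqref{eq:limitInt}. The continuity is immediate from the integral representation for fixed $r < 1$. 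Thus $k_r$ is continuous, nonnegative, and decreasing, and Theorem \ref{thm:f*g<M} applies to give
$$
   |\sQ_r(\sw; f, x)| = |f *_\sw q_r^{(\a,\b)}(x)| \le c_r\, \CM_\sw f(x), \qquad
   c_r = \int_0^\pi q_r^{(\a,\b)}(\cos\phi)\, w_{\a,\b}(\cos\phi) \sin\phi \, \d\phi.
$$

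The main obstacle, and the crux of the proof, is showing that $c_r$ is bounded uniformly in $r \in (0,1)$. Here I would use the normalization already recorded in the proof of Theorem \ref{thm:poisson}, namely $c'_{\a,\b} \int_{-1}^1 q_r^{(\a,\b)}(t)\, w_{\a,\b}(t)\,\d t = 1$, which says precisely that the constant $c_{\a,\b}$-weighted integral of the kernel equals one for every $r$. Converting this to the trigonometric integral via $t = \cos\phi$ shows that $c_r$ equals a fixed multiple of $1$, independent of $r$; note the statement of Theorem \ref{thm:f*g<M} implicitly requires the weight $\sw$ in the constant to be read as $w_{\a,\b}$ in this application, consistent with the convolution structure. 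Taking the supremum over $0 < r < 1$ then yields $\sup_{0<r<1} \sQ_r(\sw; f, x) \le c\, \CM_\sw f(x)$ with $c$ independent of $r$, completing the argument. The delicate point to watch is the behavior as $r \to 1^-$ and the $\b \to -\tfrac12$ degenerate limit \eqref{eq:limitInt}, where monotonicity must be read through the limiting identity rather than the integral, but \eqref{eq:Z-quadr-trans} supplies the needed closed form in that case.
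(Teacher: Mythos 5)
Your proof is correct, but it takes a different route from the paper's at the key step. The paper does not work with $q_r^{(\a,\b)}$ directly: it uses \eqref{eq:poisson-lwbd} to dominate $q_r^{(\a,\b)}(\cos\t)$ by the explicit decreasing majorant $k(\t)=(1-r)\big((1-\sqrt{r})^2+\t^2\big)^{-(\a+\b+1)}$ and applies Theorem \ref{thm:f*g<M} to that majorant, which then requires checking that $\int_0^\pi k(\phi)w_{\a,\b}(\cos\phi)\sin\phi\,\d\phi$ is bounded uniformly in $r$ (a point the paper asserts without detail). You instead apply Theorem \ref{thm:f*g<M} to the Poisson kernel itself, so the constant comes for free from the normalization $c'_{\a,\b}\int_{-1}^1 q_r^{(\a,\b)}(t)w_{\a,\b}(t)\,\d t=1$ --- this is cleaner and sidesteps the uniform-in-$r$ integral estimate entirely, but it shifts the burden onto proving that $\t\mapsto q_r^{(\a,\b)}(\cos\t)$ is decreasing on $[0,\pi]$, and your justification of that is the one incomplete step: for $u<0$ the denominator $1-2\sqrt{r}\cos\tfrac\t2\,u+r$ \emph{decreases} as $\t$ grows, so ``the denominator increases for each relevant $u$'' is not literally true. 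The claim is nevertheless correct and is rescued by pairing $u$ with $-u$: since the weight $(1-u^2)^{\b-\f12}$ is even and $\cos\tfrac\t2\ge0$, the derivative in $\t$ of the sum of the $u$ and $-u$ contributions has a definite sign, namely
$$
\frac{\partial}{\partial\t}\Big[\big(1-2\sqrt{r}\cos\tfrac\t2 u+r\big)^{-(\a+\b+2)}+\big(1+2\sqrt{r}\cos\tfrac\t2 u+r\big)^{-(\a+\b+2)}\Big]\le 0,\qquad u\ge0,
$$
because $\a+\b+2>0$. With that lemma supplied (and read through the limit \eqref{eq:limitInt} when $\b=-\f12$, as you note), your argument closes and is, if anything, tighter than the paper's.
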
 

\begin{proof}
By the inequality \eqref{eq:poisson-lwbd}, the kernel $q_r^{(\a,\b)}$ satisfies 
$$
  q_r^{(\a,\b)} (\cos \t) \le \frac{1-r}{\big( (1-\sqrt{r})^2 + \t^2\big)^{\a+\b+1} } =: k(\t) 
$$
It is easy to verify that this $k(\t)$ satisfies the assumption of Theorem \ref{thm:f*g<M}, so that
$$
 \left |\sQ_r(\sw; f, x) \right| = \left |f *_\sw q_r^{(\a,\b)} (x)\right | \le c \CM_\sw(f; x),
$$
where $c$ is a constant independent of $r$. 
\end{proof}

For the Ces\`aro $(C,\delta)$ operator, we obtain the following somewhat weaker result. 

\begin{thm}\label{thm:weak(C,d)bound}
Assume $\sw$ admits the addition formula \eqref{eq:7Pn} and Assertion 1 holds. If $\delta \ge \a+ \b+2$ 
and $f\in L^1(\Omega; \sw)$,  then for every $x \in \Omega$, 
\begin{equation}\label{eq:S*<CM}
       S_*^\delta (\sw_; f, x)  \le c \CM_\sw f(x).
\end{equation}
\end{thm}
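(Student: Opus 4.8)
The plan is to reduce the Ces\`aro maximal operator to the convolution--maximal bound of Theorem \ref{thm:f*g<M}. By \eqref{eq:(C,delta)} we have $S_n^\delta(\sw;f) = f *_\sw k_n^{(\a,\b),\delta}$, so writing $K_n^\delta(\t) := k_n^{(\a,\b),\delta}(\cos\t)$ the goal is to control $\sup_n |f *_\sw k_n^{(\a,\b),\delta}(x)|$ by $\CM_\sw f(x)$. Theorem \ref{thm:f*g<M} applies directly to a single kernel provided $K_n^\delta$ is nonnegative and decreasing on $[0,\pi]$, with the resulting constant $c = c_{\a,\b}\int_0^\pi K_n^\delta(\phi) w_{\a,\b}(\cos\phi)\sin\phi\,\d\phi$. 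The hypothesis $\delta \ge \a+\b+2$ is precisely the range in which, by Theorem \ref{thm:cesaro}(1), the $(C,\delta)$ means are nonnegative operators, which strongly suggests that $k_n^{(\a,\b),\delta}(t) \ge 0$ on $[-1,1]$; and the normalization $c_{\a,\b}'\int_{-1}^1 k_n^{(\a,\b),\delta}(t) w_{\a,\b}(t)\,\d t = 1$ (since the $(C,\delta)$ mean reproduces constants) makes the integral constant uniformly bounded in $n$. So two of the three needed properties come essentially for free and uniformly in $n$.

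First I would record that $K_n^\delta \ge 0$ and that its weighted integral equals $1$ independently of $n$, giving a uniform constant. The monotonicity requirement of Theorem \ref{thm:f*g<M}, however, is where care is needed: the Fej\'er--Jacobi--type kernel $k_n^{(\a,\b),\delta}$ is not literally decreasing in $\t$ on all of $[0,\pi]$ for finite $n$, so one cannot apply the theorem to $k_n^{(\a,\b),\delta}$ verbatim. The standard remedy is to dominate $k_n^{(\a,\b),\delta}$ pointwise by a decreasing majorant. The classical estimate for the $(C,\delta)$ Jacobi kernel in the range $\delta > \a+\tfrac12$ (with $\a \ge \b$) gives, for $0 \le \t \le \pi$,
\begin{equation*}
  \big| k_n^{(\a,\b),\delta}(\cos\t) \big| \le c\, \frac{n^{\a+\b+2}}{(1+n\t)^{\delta - \b + \a + 1}} \cdot \frac{1}{(\,\cdots\,)},
\end{equation*}
more usefully rewritten, after integrating against $w_{\a,\b}(\cos\t)\sin\t$, as a bound by a nonincreasing function of $\t$. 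Concretely I would set
\begin{equation*}
  \widetilde K_n(\t) := c\, \min\!\left\{ n^{\a+\b+2},\ \frac{1}{\t^{\,2\a+2}\,(n\t)^{\,\delta - \a}} \right\},
\end{equation*}
or a similar two-regime envelope, verify that $\widetilde K_n$ is continuous, nonnegative, and decreasing on $[0,\pi]$, dominates $K_n^\delta$, and has $\int_0^\pi \widetilde K_n(\phi)\, w_{\a,\b}(\cos\phi)\sin\phi\,\d\phi \le c$ uniformly in $n$.

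With such an envelope in hand the argument closes quickly: since $f *_\sw |k_n^{(\a,\b),\delta}|(x) \le f *_\sw \widetilde K_n(x)$ for $f \ge 0$ (positivity of $S_{\t,\sw}$, Proposition \ref{prop:Stheta}(ii), lets one pass to $|f|$ in general), Theorem \ref{thm:f*g<M} applied to $\widetilde K_n$ gives $|S_n^\delta(\sw;f,x)| \le c\, \CM_\sw f(x)$ with $c$ the uniform integral bound, and taking the supremum over $n$ yields \eqref{eq:S*<CM}. The main obstacle is thus not the convolution--maximal machinery, which is already packaged in Theorem \ref{thm:f*g<M}, but the kernel estimate: one must produce a decreasing majorant of the Jacobi $(C,\delta)$ kernel whose weighted $L^1$ norm is bounded uniformly in $n$ in the range $\delta \ge \a+\b+2$. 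This is a one-dimensional Fourier--Jacobi fact, available from the sharp pointwise asymptotics of Ces\`aro--Jacobi kernels (as used for the sphere in \cite{BC} and catalogued in \cite{DaiX}), and invoking it is legitimate here since all the reductions above depend only on the one-variable kernel $k_n^{(\a,\b),\delta}$ and not on the geometry of $\Omega$.
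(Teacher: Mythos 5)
Your proposal is correct and takes essentially the same route as the paper: the paper likewise dominates $k_n^{(\a,\b),\delta}(\cos\t)$ by a continuous, nonnegative, decreasing majorant whose integral against $w_{\a,\b}(\cos\t)\sin\t$ is bounded uniformly in $n$, and then applies Theorem \ref{thm:f*g<M} and takes the supremum over $n$; the majorant used there is the classical estimate $k_n(\t)=c\,n^{-1}(\t+n^{-1})^{-(2\a+3)}$ from \cite[(9.4.4) and (9.41.14)]{Sz}. The only point to adjust is your tentative envelope, whose value near $\t=0$ should be of order $n^{2\a+2}$ rather than $n^{\a+\b+2}$ (these differ when $\b<\a$), but since you explicitly defer the precise form to the sharp Ces\`aro--Jacobi kernel asymptotics, this does not affect the argument.
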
 

\begin{proof}
By \eqref{eq:(C,delta)}, we need an estimate of $k_n^{(\a,\b),\delta}(u,1)$. For $\delta \ge \a+\b+2$, the
estimate given in \cite[(9.4.4) and (9.41.14)]{Sz} states 
\begin{align*}
  \left| k_n^{(\a,\b),\delta}(\cos \t,1) \right|  \le c  n^{-1} (\t+n^{-1})^{-(2\a+3)} =: k_n(\t).
\end{align*}
Clearly $k_n(\t)$ satisfies the conditions of \eqref{thm:f*g<M} and that $\int_0^\pi k_n(\t) w_{\a,\b}(\cos \t) \sin \t \d \t$
is bounded by a constant independent of $n$ can be easily verified. Hence,
$$
  \left|S_n^\delta \left(\sw_{-1,\g}; f, (x,t)\right)\right|  \le c \CM_\g f(x,t)
$$
for all $n \ge 0$. Taking supreme over $n$ completes the proof. 
\end{proof}

This result is weaker in the case of the unit sphere and the unit ball, for which the condition $\delta \ge \a+\b+2$ 
can be lowered to $\a +\f12$. This, however, requires more explicit formulation of the addition formula in
\eqref{eq:7Pn}. The stronger version holds for the conic domains, as we shall show in the following two sections. 

Since the maximal function $\CM_\sw f$ is not the Hardy-Littlewood maximal function, it is not immediately
clear if it satisfy the usual property of the maximal function. In many cases, however, the two maximal 
functions are closely related. For the conic domains, we shall show that $\CM_\sw f$ is bounded by
the Hardy-Littlewood maximal function. 

\section{Fourier orthogonal series on the conic surface}
\setcounter{equation}{0}

In this section we work with the conic surface in $\RR^{d+1}$ defined, for $d\ge 2$, by 
$$
     \VV_{0}^{d+1}= \{(x,t): \|x\| = t, \, x \in \RR^d, \, 0 \le t \le 1\}.
$$
The addition formula on this domain is established only recently in \cite{X20a}. We discuss the orthogonal 
structure in the first subsection, study the maximal function in the second section, and state the multiplier
theorem in the third subsection. 

\subsection{Analysis on the conic surface}
For $\g \ge -\f12$, let $\sw_{-1,\g}$ be the weight function defined by 
$$
   \sw_{-1,\g}(t) = t^{-1} (1-t)^\g,  \qquad 0 < t <1.  
$$
We consider the homogeneous space $(\VV_0^{d+1}, \sw_{-1,\g}, \sd_{\VV_0})$, where 
$\sd_{\VV_0}$ is the distance function on $\VV_0^{d+1}$, defined by
\begin{equation*}
  \sd_{\VV_0} ((x,t), (y,s)): =  \arccos \left(\sqrt{\frac{\la x,y\ra + t s}{2}} + \sqrt{1-t}\sqrt{1-s}\right)
\end{equation*}
for $(x,t)$ and $(y,s)$ on $\VV_0^{d+1}$. For $\Omega = \VV_0^{d+1}$, the ball becomes the conic cap. 
For $r > 0$ and $(x,t)$ on $\VV_0^{d+1}$, we denote the conic cap centered at $(x,t)$ with radius $r$ by 
$$
      \sc((x,t), r): = \left\{ (y,s) \in \VV_0^{d+1}: \sd_{\VV_0} \big((x,t),(y,s)\big)\le r \right\}.
$$   
Then $\sw_{-1,\g}$ is a doubling measure with respect to $\sd_{\VV_0}$ since it satisfies \cite[(4.4)]{X21}
\begin{align} \label{eq:sw-doubling}
 \sw_{-1,\g} \big(\sc((x,t), r)\big):= \,& \bs_\g \int_{ \sc((x,t), r)} \sw_{-1,\g} (s) \d \s(y,s)  \\
    \sim \, & r^d (t+ r^2)^{\f{d-2}{2}} (1-t+ r^2)^{\g+\f12}, \notag
\end{align}
where $\bs_\g$ is the normalized constant defined by $\bs_\g \int_{\VV_0^{d+1}} \sw_{-1,\g} (t) \d \s (x,t) =1$.

The orthogonality on $L^2(\VV_0^{d+1},\sw_{-1,\g})$ is defined in terms of the inner product 
$$
\la f, g\ra_{\sw} =\bs_{\g} \int_{\VV_0^{d+1}} f(x,t) g(x,t) \sw_{-1,\g}(t)\d \s(x,t),
$$ 
where $\d \s$ denote the surface measure on $\VV_0^{d+1}$, which is well-defined for all polynomials
restricted on the conic surface. The orthogonal structure is studied in \cite{X20a} for a family of more general 
weight functions $t^\b (1-t)^\g$ with $\b > -d$ and $\g > -1$. We consider $\b = -1$ and $\g \ge -\f12$
because the addition formula for $\sw_{-1,\g}$ has a simpler form. 

Let $\CV_n(\VV_0^{d+1},\sw_{-1,\g})$ be the space of orthogonal polynomials of degree $n$. The reproducing
kernel $\sP_n(\sw_{-1,\g}; \cdot,\cdot)$ of this space satisfies an addition formula given below.

\begin{thm}  \label{thm:sfPbCone2}
Let $d \ge 2$ and $\g \ge -\f12$. Then, for $(x,t), (y,s) \in \VV_0^{d+1}$,
\begin{align} \label{eq:sfPbCone}
 \sP_n \big(\sw_{-1,\g}; (x,t), (y,s)\big) =  b_{\g,d}  \int_{[-1,1]^2} & Z_{n}^{(\g+d-\f32,-\f12)} \left(2 \zeta (x,t,y,s; v)^2-1 \right) \\
  & \times  (1-v_1^2)^{\f{d-4}{2}} (1-v_2^2)^{\g-\f12} \d v, \notag
\end{align} 
where $b_{\g,d}$ is a constant so that $\sP_0\big(\sw_{-1,\g}; (x,t), (y,s)\big) =1$ and 
\begin{equation} \label{eq:zeta}
 \zeta (x,t,y,s; v)  = v_1 \sqrt{\tfrac{st + \la x,y \ra}2}+ v_2 \sqrt{1-t}\sqrt{1-s};
\end{equation}
moreover, the identity holds under the limit \eqref{eq:limitInt} when $\g = -\f12$ and/or $d = 2$. 
\end{thm}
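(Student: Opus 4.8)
The plan is to compute the reproducing kernel $\sP_n(\sw_{-1,\g};\cdot,\cdot)$ from an explicit orthogonal basis of $\CV_n(\VV_0^{d+1},\sw_{-1,\g})$ and to collapse the resulting sum into the single Jacobi polynomial on the right of \eqref{eq:sfPbCone}. First I would parametrize the conic surface by $(x,t)=(t\xi,t)$ with $\xi\in\sph$ and $0\le t\le 1$; then, writing $(y,s)=(s\eta,s)$, one has $\la x,y\ra = ts\,\la\xi,\eta\ra$ and hence $\f{st+\la x,y\ra}2 = st\,\f{1+\la\xi,\eta\ra}2$. A polynomial of degree $n$ restricted to $\VV_0^{d+1}$ is spanned by products of a one-variable polynomial in $t$ of degree $n-\ell$ and $t^\ell Y_\ell(\xi)$, where $Y_\ell$ is a spherical harmonic of degree $\ell$ on $\sph$; orthogonalizing in $t$ against the weight induced by $\sw_{-1,\g}$ yields Jacobi polynomials in the radial variable and produces an orthogonal basis. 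Inserting this basis into $\sP_n=\sum P(x,t)P(y,s)/\|P\|^2$ and summing over the spherical harmonics of each fixed degree $\ell$ by the classical addition formula \eqref{eq:additionS} reduces $\sP_n$ to a finite sum over $\ell$ of products of radial Jacobi polynomials in $t,s$ times $(ts)^\ell\,Z_\ell^{(d-2)/2}(\la\xi,\eta\ra)$.

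It then remains to prove the special-function identity that this sum equals $b_{\g,d}\int_{[-1,1]^2}Z_{2n}^{\g+d-1}(\zeta)\,(1-v_1^2)^{\f{d-4}2}(1-v_2^2)^{\g-\f12}\,\d v$, where I have used $Z_n^{(\g+d-\f32,-\f12)}(2\zeta^2-1)=Z_{2n}^{\g+d-1}(\zeta)$ from \eqref{eq:Z-quadr-trans}. I would verify this by expanding $Z_{2n}^{\g+d-1}(\zeta)$ as a polynomial in $\zeta^2$ and substituting $\zeta=v_1 A+v_2 B$ with $A=\sqrt{st}\sqrt{\f{1+\la\xi,\eta\ra}2}$ and $B=\sqrt{1-t}\sqrt{1-s}$ (note $|A|+|B|\le\sqrt{st}+\sqrt{(1-t)(1-s)}\le1$, so $\zeta\in[-1,1]$). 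Since $Z_{2n}^{\g+d-1}$ is even, integrating against the symmetric weights kills every monomial with an odd power of $v_1$ or of $v_2$, leaving a polynomial in $A^2=\f{st+\la x,y\ra}2$ and $B^2=(1-t)(1-s)$; the weights $(1-v_1^2)^{\f{d-4}2}$ and $(1-v_2^2)^{\g-\f12}$ are exactly those attached to the Gegenbauer index $\f{d-2}2$ of the sphere $\sph$ and to the parameter $\g$, respectively. Structurally, this double integration is the composition of the Gegenbauer product formula for $\sph$ (the integral form of \eqref{eq:additionS}, which folds $Z_\ell^{(d-2)/2}(\la\xi,\eta\ra)$ and $(ts)^\ell$ into the first summand of $\zeta$) with a second product representation encoding the endpoint weight $(1-t)^\g$ (which contributes the summand $v_2\sqrt{1-t}\sqrt{1-s}$ and raises the index to $\g+d-1$).

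Matching coefficients reduces to Jacobi/Gegenbauer connection-coefficient computations, after which the sum over the radial degree telescopes into the single term $Z_{2n}^{\g+d-1}(\zeta)$; applying the quadratic transformation \eqref{eq:Z-quadr-trans} in reverse rewrites this as $Z_n^{(\g+d-\f32,-\f12)}(2\zeta^2-1)$, giving \eqref{eq:sfPbCone}. The constant $b_{\g,d}$ is fixed by evaluating at $n=0$ and imposing $\sP_0=1$. Finally, the degenerate cases are read off from the limit \eqref{eq:limitInt}: letting $\g\to-\f12$ collapses the $v_2$-integral to the two endpoints $v_2=\pm1$, and the case $d=2$ is the same limit applied to the $v_1$-integral.

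The \emph{main obstacle} is the second step: finding and verifying the precise product formula that turns the double sum of radial-Jacobi-times-angular-Gegenbauer terms into one Gegenbauer polynomial of the combined variable $\zeta$, and tracking every normalizing constant (the $h_n^{(\a,\b)}$, the Gegenbauer and Jacobi normalizations, and $b_{\g,d}$) so that the two sides agree identically rather than merely up to a constant. Confirming that the two nested integral representations compose to yield exactly the index $\g+d-1$ and exactly the argument $\zeta$, with no residual factors, is where essentially all of the special-function content lies; once that bookkeeping is settled, the remaining manipulations are formal.
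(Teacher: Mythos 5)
The paper itself does not prove Theorem \ref{thm:sfPbCone2}: the identity is imported from \cite{X20a}, and the only manipulation performed here is the cosmetic rewriting of $Z_{2n}^{\g+d-1}$ as $Z_n^{(\g+d-\f32,-\f12)}(2\zeta^2-1)$ via the quadratic transformation \eqref{eq:Z-quadr-trans}. So your proposal must stand on its own, and as it stands it has a genuine gap. The first stage is sound and does match the setup in \cite{X20a}: parametrizing $(x,t)=(t\xi,t)$, building an orthogonal basis from products of solid spherical harmonics $t^\ell Y_\ell(\xi)$ with Jacobi polynomials in $t$ whose parameter depends on $\ell$ (absorbing the factor $t^{2\ell}$, the $t^{-1}$ from $\sw_{-1,\g}$, and the $t^{d-1}$ from the surface measure), and collapsing each spherical-harmonic block with \eqref{eq:additionS} to get $\sP_n$ as a sum over $\ell$ of radial Jacobi products times $(ts)^\ell Z_\ell^{\f{d-2}{2}}(\la\xi,\eta\ra)$.

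The problem is the second stage, which is the entire content of the theorem and which you only assert. The claim that ``matching coefficients reduces to Jacobi/Gegenbauer connection-coefficient computations, after which the sum over the radial degree telescopes'' is not an argument: nothing telescopes here. What is actually required is a two-variable expansion of $Z_{2n}^{\g+d-1}(v_1A+v_2B)$, with $A=\cos\t_1\cos\t_2\cos\f{\psi}{2}$ and $B=\sin\t_1\sin\t_2$ after setting $t=\cos^2\t_1$, $s=\cos^2\t_2$, $\cos\psi=\la\xi,\eta\ra$, followed by term-by-term integration via the iterated Gegenbauer product formula and an exact match with the $\ell$-dependent normalization constants $h^{(\a_\ell,\g)}_{n-\ell}$ of the radial basis; note also that the classical product formula applies to arguments of the form $ab+\sqrt{1-a^2}\sqrt{1-b^2}\,v$, and verifying that the composite argument $\zeta$ genuinely arises as a two-fold composition of such forms (rather than merely resembling one) is part of the work. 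This computation occupies a substantial portion of \cite{X20a} and cannot be waved through. You correctly identify it as ``the main obstacle,'' but that honesty does not close the gap: what you have is a plausible strategy outline, not a proof, and the degenerate cases $\g=-\f12$ and $d=2$ can only be ``read off'' from \eqref{eq:limitInt} once the non-degenerate identity has actually been established.
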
 
 
The identity \eqref{eq:sfPbCone} is stated in \cite{X20a} in terms of $Z_{2n}^{\g+d-1}$, which we have rewritten using \eqref{eq:Z-quadr-trans} following \cite{X21}. It gives the addition formula \eqref{eq:7Pn} with $\a = \g+d-\f32$ and 
$\b = -\f12$. Following the Definition \ref{defn:7convol}, the convolution of $f \in L^1(\VV_0^{d+1}, \sw_{-1,\g})$ 
and $g \in L^1([-1,1],w_{\g+d-\f32,-\f12})$, denoted by $f *_\g g$ is defined by 
$$
 f *_\g g (x,t) := \bs_\g \int_{\VV_0^{d+1}} f(y,s) \sT^\g g\big((x,t),(y,s) \big) \sw_{-1,\g}(s) \d \s,
$$
where the operator $g \mapsto \sT^\g g$ is defined by  
$$
   \sT^\g g\big((x,t),(y,s) \big):=  b_{\g,d}  \int_{[-1,1]^2}  g \left(2 \zeta (x,t,y,s; v)^2-1 \right) 
        (1-v_1^2)^{\f{d-4}{2}} (1-v_2^2)^{\g-\f12} \d v.
$$
We can also define the operator $g \mapsto S_{\t,\sw} g$ as in \eqref{eq:Stheta} and use it to show that the 
convolution $f*_\g g$ satisfies Proposition \ref{prop:Stheta}, but this will not be needed below. The maximal
function $\CM_\sw$ defined in \eqref{eq:CMw}, denote by $\CM_\g$ for $\sw_{-1,\g}$, is given by
$$
   \CM_\g f(x,t) : = \sup_{0 \le \t \le \pi} \frac{f *_\g \chi_{[0,\t]} (x,t)}{ 2^\l \int_0^\t \big(\sin \f{\phi}{2}\big)^{2\l} \d \phi}
    \quad\hbox{with}\quad \l = \g+d-1,
$$
where we have used the elementary identity
$$
   \int_0^\t (1-\cos \phi)^{\l-\f12} (1+\cos\phi)^{-\f12} \sin \phi \,\d\phi = 2^\l  \int_0^\t \left(\sin \f{\phi}{2}\right)^{2\l} \d \phi.
$$
 
The Poisson integral $\sQ_r(\sw_{-1,\g};f)$, defined by \eqref{eq:Poisson}, is the convolution of $f$ with 
$q_r^{(\l-\f12,-\f12)}$, the latter is given by, as the limit of \eqref{eq:q_rPoisson},
$$
 q_r^{(\l-\f12,-\f12)}(u) = \f12 \Bigg[\frac{1-r}{ \big(1 - 2 \sqrt{r} \sqrt{\f{1+u}2} + r\big)^{\g+\d}}
       +\frac{1-r}{ \big(1 + 2 \sqrt{r} \sqrt{\f{1+u}2} + r\big)^{\g+\d}}\Bigg]. 
$$
We define the maximal Poisson integral operator on the conic domain by 
$$
  \sQ_*(\sw_{-1,\g}; f, (x,t)) = \sup_{0 < r<1}  \sQ_n(\sw_{-1,\g}; |f|, (x,t)). 
$$
The following theorem shows that the maximal function is equivalent to the maximal Poisson integral operator
on the conic surface. 

\begin{thm} \label{thm:sQr=CM}
Let $d \ge 2$ and $\g \ge -\f12$. If $f \in L^1(\VV_0^{d+1}; \sw_{-1,\g})$, then 
for every $(x,t) \in \VV_0^{d+1}$, 
$$
    c_1 \CM_\g f(x,t) \le \sQ_*(\sw_{-1,\g}; f, (x,t))  \le c_2 \CM_\g f(x,t).
$$
\end{thm}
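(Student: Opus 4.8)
The plan is to establish the two inequalities separately. Since the convolution kernels $\chi_{[0,\t]}$ and $q_r^{(\a,\b)}$ are nonnegative and $\sQ_*$ is built from $|f|$, it suffices to treat $f\ge0$; the general case follows by applying the result to $|f|$. The upper estimate $\sQ_*(\sw_{-1,\g};f)\le c_2\CM_\g f$ is essentially already proved: it is Theorem~\ref{thm:PoissonM} specialized to $\sw=\sw_{-1,\g}$, whose addition formula \eqref{eq:sfPbCone} is in force on $\VV_0^{d+1}$. As the proof of Theorem~\ref{thm:PoissonM} rests only on Theorem~\ref{thm:f*g<M}, this bound is available here without circular appeal to Assertion~1. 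Thus the genuine task is the lower estimate $c_1\CM_\g f\le\sQ_*(\sw_{-1,\g};f)$.

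For the lower estimate I would pass to the representation of Proposition~\ref{prop:Stheta}(i). With $\a=\l-\f12$, $\b=-\f12$ and the identity $w_{\a,\b}(\cos\psi)\sin\psi=2^\l\big(\sin\frac{\psi}{2}\big)^{2\l}$, the numerator of the $\t$-term of $\CM_\g f$ is
$$
 (f*_\g\chi_{[0,\t]})(x,t)=c_{\a,\b}\int_0^\t S_{\psi,\g}f(x,t)\,w_{\a,\b}(\cos\psi)\sin\psi\,\d\psi,
$$
where $S_{\psi,\g}$ is the translation operator \eqref{eq:Stheta} for $\sw_{-1,\g}$, while
$$
 \sQ_r(\sw_{-1,\g};f,(x,t))=c_{\a,\b}\int_0^\pi S_{\psi,\g}f(x,t)\,q_r^{(\a,\b)}(\cos\psi)\,w_{\a,\b}(\cos\psi)\sin\psi\,\d\psi.
$$
Because $f\ge0$ forces $S_{\psi,\g}f\ge0$ by Proposition~\ref{prop:Stheta}(ii), all integrands are nonnegative; so it suffices to find, for each $\t\in(0,\pi]$, a radius $r=r(\t)\in(0,1)$ with
$$
 q_{r(\t)}^{(\a,\b)}(\cos\psi)\ge\frac{c}{\int_0^\t w_{\a,\b}(\cos\phi)\sin\phi\,\d\phi},\qquad \psi\in[0,\t],
$$
and $c$ independent of $\t$. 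Granting this, restricting the $\sQ_{r(\t)}$ integral to $[0,\t]$ and inserting the bound yields $\sQ_{r(\t)}(\sw_{-1,\g};f,(x,t))\ge c\,(f*_\g\chi_{[0,\t]})(x,t)\big/\int_0^\t w_{\a,\b}(\cos\phi)\sin\phi\,\d\phi$, which is $c$ times the $\t$-term of $\CM_\g f$; taking the supremum over $\t$ finishes the proof.

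The heart of the matter --- and the step I expect to be the main obstacle --- is this uniform kernel lower bound. Keeping only the first summand of the explicit kernel and using $1-2\sqrt r\cos\frac{\psi}{2}+r=(1-\sqrt r)^2+4\sqrt r\sin^2\frac{\psi}{4}$ from \eqref{eq:poisson-lwbd}, together with $\sin^2\frac{\psi}{4}\le\sin^2\frac{\t}{4}$ and $\sqrt r\le1$, gives for $\psi\in[0,\t]$
$$
 q_r^{(\l-\f12,-\f12)}(\cos\psi)\ge\frac12\,\frac{1-r}{\big((1-\sqrt r)^2+4\sin^2\frac{\t}{4}\big)^{\l+1}}.
$$
Choosing $r=r(\t)$ with $1-\sqrt{r(\t)}\sim\min\{\t,1\}$ makes both $1-r(\t)$ and the quantities $(1-\sqrt r)^2$, $\sin^2\frac{\t}{4}$ comparable to powers of $\min\{\t,1\}$, whence the right side is $\gtrsim(\min\{\t,1\})^{-(2\l+1)}$; since $\int_0^\t w_{\a,\b}(\cos\phi)\sin\phi\,\d\phi=2^\l\int_0^\t\big(\sin\frac{\phi}{2}\big)^{2\l}\d\phi\sim(\min\{\t,1\})^{2\l+1}$, this is precisely the reciprocal of the required normalizer. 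The delicate bookkeeping lies in the split between small $\t$, where $r(\t)\to1$, and $\t$ bounded away from $0$, where a single fixed $r$ already keeps $q_r(\cos\psi)$ bounded below by a positive constant on all of $[0,\pi]$; matching the constants uniformly across this range is the only real care the argument demands.
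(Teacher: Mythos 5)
Your proposal is correct and follows essentially the same route as the paper: the upper bound is quoted from Theorem \ref{thm:PoissonM}, and the lower bound rests on the pointwise kernel estimate $q_{r(\t)}^{(\l-\f12,-\f12)}(\cos\psi)\gtrsim \t^{-(2\l+1)}$ for $\psi\in[0,\t]$ with $1-\sqrt{r(\t)}\sim\t$, combined with positivity to pass from kernels to convolutions. The only cosmetic difference is that you phrase the positivity step via the translation operator of Proposition \ref{prop:Stheta}, whereas the paper states it directly as the kernel domination $\chi_{[\cos\t,1]}(\cos\phi)\le c\,\t^{2\l+1}q_{r}^{(\l-\f12,-\f12)}(\cos\phi)$ and uses the positivity of $*_\g$.
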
 

\begin{proof}
Let $\l = \g+d-1$. The right-hand side inequality is established in Theorem \ref{thm:PoissonM}. For 
the left-hand inequality, we observe that, for $1 - \sqrt{r} \sim \t$ and $0 \le \phi \le \t$, 
\begin{align*}
  q_r^{(\l-\f12,-\f12)}(\cos \phi) \, & \ge \f12 \frac{1-r}{ (1- 2 \sqrt{r} \cos \f \phi 2 +r )^{\l+1}}
      \ge    \frac{1-r}{ ((1-\sqrt{r})^2 + \sqrt{r} \phi^2)^{\l+1}} \\
      &  \ge c \frac{1-r}{ ((1-\sqrt{r})^2 + \sqrt{r} \t^2)^{\l+1}} 
    \ge c\, \t^{- 2\l -1},
\end{align*}
which implies, in particular, that  
$$
      \chi_{[\cos\t,1]}(\cos \phi) \le \t^{2\l+1}  q_r^{(\l-\f12,-\f12)}(\cos\phi), \qquad 0 \le \phi \le \pi. 
$$
Consequently, since $\int_0^\t (\sin \f{\phi}{2})^{2\l} \d \phi \sim \t^{2\l+1}$, for $0 \le \t \le \pi$ and $r \sim (1- c \t )^2$
we obtain  
$$
    \frac{ |f| \ast_\g  \chi_{[\cos\t,1]}(x,t) }{\t^{2\l+1}} \le c  |f| \ast_\g q_r^{(\l-\f12,-\f12)}(x,t) \le c\, \sQ_*(\sw_{-1,\g}; f, (x,t)).
$$
Taking supreme over $\t$ proves  $c_1 \CM_\g f(x,t) \le \sQ_*(\sw_{-1,\g}; f, (x,t))$.  
\end{proof}

\subsection{Boundedness of the Maximal function}
The Hardy-Littlewood maximal function for $(\VV_0^{d+1}, \sw_{-1,\g}, \sd_{\VV_0})$ is defined by 
$$
  M_\g f(x,t) := \sup_{0 \le \t \le \pi} \frac{1}{\sw_{-1,\g} \big(\sc((x,t), \t)\big)} \int_{\sc((x,t), \t)} |f(y,s)| \sw_{-1,\g}(s) \d \s (y,s), 
$$ 
where $\sc((x,t),\t)$ is the conic cap. The main result of the subsection shows that the maximal function
$\CM_\g f$ is bounded by $M_\g f$. More precisely, our main result in this subsection is the following theorem. 

\begin{thm}\label{thm:CM<M}
Let $d \ge 2$ and $\g\ge -\f12$. For $f \in L^1 \left(\VV_0^{d+1}, \sw_{-1,\g}\right)$, 
$$
   \CM_\g f(x,t) \le c \,M_\g f(x,t), \qquad (x,t) \in \VV_0^{d+1}. 
$$
\end{thm}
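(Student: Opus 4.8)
The plan is to reduce the theorem to a single pointwise estimate for the kernel of the convolution defining $\CM_\g$, after which the definition of $M_\g$ closes the argument. We may assume $f\ge 0$. Fixing $\t\in[0,\pi]$ and using Definition~\ref{defn:7convol},
\[
  f*_\g\chi_{[\cos\t,1]}(x,t)=\bs_\g\int_{\VV_0^{d+1}}f(y,s)\,K_\t\big((x,t),(y,s)\big)\,\sw_{-1,\g}(s)\,\d\s(y,s),\qquad K_\t:=\sT^\g\chi_{[\cos\t,1]}.
\]
Since $\chi_{[\cos\t,1]}(2\zeta^2-1)=1$ precisely when $|\zeta(x,t,y,s;v)|\ge\cos\tfrac\t2$, and since the maximum of $\zeta$ over $v\in[-1,1]^2$ is attained at $v_1=v_2=1$ with value $\sqrt{\tfrac{st+\la x,y\ra}2}+\sqrt{1-t}\sqrt{1-s}=\cos\sd_{\VV_0}\big((x,t),(y,s)\big)$, the kernel $K_\t$ vanishes unless $\sd_{\VV_0}\big((x,t),(y,s)\big)\le\tfrac\t2$; thus $K_\t$ is supported in the conic cap $\sc((x,t),\tfrac\t2)$.

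The heart of the matter is the pointwise kernel bound
\[
  K_\t\big((x,t),(y,s)\big)\le c\,\frac{\t^{2\l+1}}{\sw_{-1,\g}\big(\sc((x,t),\t)\big)}\sim c\,\frac{\t^{2\g+d-1}}{(t+\t^2)^{\frac{d-2}2}(1-t+\t^2)^{\g+\frac12}},\qquad (y,s)\in\sc\big((x,t),\tfrac\t2\big),
\]
where the equivalence is the explicit cap size \eqref{eq:sw-doubling}. Granting it, the support property together with $\sc((x,t),\tfrac\t2)\subset\sc((x,t),\t)$ gives
\[
  f*_\g\chi_{[\cos\t,1]}(x,t)\le c\,\t^{2\l+1}\,\frac{1}{\sw_{-1,\g}(\sc((x,t),\t))}\int_{\sc((x,t),\t)}f\,\sw_{-1,\g}\,\d\s\le c\,\t^{2\l+1}\,M_\g f(x,t),
\]
the last inequality being the definition of $M_\g$ (with the constant $\bs_\g$ absorbed). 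Dividing by the denominator $2^\l\int_0^\t(\sin\tfrac\phi2)^{2\l}\d\phi\sim\t^{2\l+1}$ and taking the supremum over $\t$ yields the theorem.

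To prove the kernel bound I would make the geometry explicit by writing $x=t\xi$, $y=s\eta$ with $\xi,\eta$ unit vectors, so that $\zeta=v_1A+v_2B$ with $A=\sqrt{ts}\cos\tfrac\omega2$, $B=\sqrt{(1-t)(1-s)}$ and $\cos\omega=\la\xi,\eta\ra$; here $A+B=\cos\rho$ with $\rho=\sd_{\VV_0}\big((x,t),(y,s)\big)\le\tfrac\t2$. Then
\[
  K_\t=2b_{\g,d}\int_{\{v_1A+v_2B\ge\cos\frac\t2\}\cap[-1,1]^2}(1-v_1^2)^{\frac{d-4}2}(1-v_2^2)^{\g-\frac12}\,\d v,
\]
which I would estimate by localizing near $v_1=v_2=1$: substituting $v_1=1-a$, $v_2=1-b$ turns the constraint into $aA+bB\le\cos\rho-\cos\tfrac\t2\lesssim\t^2$, and a scaling in $(a,b)$ produces a bound of the form $\t^{\,2(\frac d2+\g-\frac12)}A^{-\frac{d-2}2}B^{-(\g+\frac12)}$.

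The main obstacle is that this scaling is valid only when $A$ and $B$ are not too small, and both degenerate---$A$ near the vertex $t=0$ and $B$ near the base $t=1$---so a uniform kernel bound is false and the estimate must be carried out regime by regime. In the interior, $t\gtrsim\t^2$ and $1-t\gtrsim\t^2$, one has $A\sim t$ and $B\sim 1-t$, the admissible region is a genuine small triangle, and the scaled integral reproduces the target because $t+\t^2\sim t$ and $1-t+\t^2\sim 1-t$ there. Near the vertex ($t\lesssim\t^2$, hence $A\lesssim\t^2$) the constraint scarcely restricts $v_1$, so the $v_1$-integral contributes a constant while the $v_2$-band of width $\lesssim\t^2$ yields $K_\t\lesssim\t^{2\g+1}$; near the base ($1-t\lesssim\t^2$, hence $B\lesssim\t^2$) the roles of $v_1$ and $v_2$ reverse, the $v_2$-integral is a constant and the $v_1$-band yields $K_\t\lesssim\t^{d-2}$, while the case $\t\gtrsim 1$ is a coarse global estimate. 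In each regime this is exactly the size $\t^{2\g+d-1}\big[(t+\t^2)^{\frac{d-2}2}(1-t+\t^2)^{\g+\frac12}\big]^{-1}$ demanded by \eqref{eq:sw-doubling}. The delicate point is tracking the geometry of the admissible region when $A$ or $B$ is comparable to the band width; the degenerate exponents arising when $d=2$ or $\g=-\tfrac12$ are handled throughout by the limiting convention \eqref{eq:limitInt}, which evaluates the corresponding variable at $\pm 1$.
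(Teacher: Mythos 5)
Your proof has the same skeleton as the paper's: reduce to the pointwise kernel bound $\sT^\g\chi_{[\cos\t,1]}((x,t),(y,s))\le c\,\t^{2\g+d-1}(t+\t^2)^{-\f{d-2}2}(1-t+\t^2)^{-\g-\f12}\chi_{\sc((x,t),\t/2)}(y,s)$, obtain the support property from the fact that $|\zeta|$ is maximized at $v=(1,1)$, where it equals $\cos\sd_{\VV_0}((x,t),(y,s))$, and then feed the bound into the definition of $M_\g$. The divergence is in how the double integral over $[-1,1]^2$ is estimated. The paper decouples the two variables first: since $|\la x,y\ra|\le ts$, the quantity $|\zeta(x,t,y,s;v)|$ is dominated by both $|v_1|\sqrt{ts}+\sqrt{1-t}\sqrt{1-s}$ and $\sqrt{ts}+|v_2|\sqrt{1-t}\sqrt{1-s}$, so the characteristic function of $\{|\zeta|\ge\cos\f\t2\}$ is bounded by a product of two one-variable characteristic functions and the integral splits as $I_1\times I_2$, each factor being computed by an explicit change of variables (with the trivial regime $\sqrt t\le 2\t$, resp.\ $\sqrt{1-t}\le 2\t$, split off). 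You instead keep the two-dimensional integral and run the corner substitution $v_i=1-a_i$ with scaling; this produces the same exponents, and your regime decomposition (vertex, base, interior, $\t\gtrsim 1$) mirrors the case splits hidden inside the paper's one-dimensional estimates. The one step you should make explicit is the interior claim $A\sim t$: your $A=\sqrt{ts}\cos\f\omega2$ retains the angular factor, and the needed lower bound $\cos\f\omega2\ge\f12$ on the cap must be extracted from $\sqrt{ts}\cos\f\omega2+\sqrt{(1-t)(1-s)}\ge\cos\f\t2$ together with the Cauchy--Schwarz inequality $\sqrt{ts}+\sqrt{(1-t)(1-s)}\le 1$, which give $\cos\f\omega2\ge 1-(1-\cos\f\t2)/\sqrt{ts}$ and hence the claim once $t\gtrsim\t^2$ (using $|\sqrt t-\sqrt s|\le\t/2$ to get $\sqrt{ts}\gtrsim t$); the paper's domination discards $\la x,y\ra$ at the outset and never meets this point. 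Both routes rest on the same metric inequalities $|\sqrt t-\sqrt s|\le\sd_{[0,1]}(t,s)$ and $|\sqrt{1-t}-\sqrt{1-s}|\le\sd_{[0,1]}(t,s)$ and deliver the same bound; the factorization is a little cleaner to write down, while your direct scaling makes the origin of each factor of \eqref{eq:sw-doubling} in the target more visible.
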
 

\begin{proof}
Rewriting the integral over ${\sc((x,t), \t)}$ in $M_\g f$ as an integral over $\VV_0^{d+1}$,
$$
 \int_{\sc((x,t), \t)} |f(y,s)| \sw_{-1,\g}(s) \d \s (y,s) =  \int_{\VV_0^{d+1}}  |f(y,s)| \chi_{\sc((x,t), \t)}(y,x) 
     \sw_{-1,\g}(s) \d \s (y,s),
$$
where $\chi_{\sc((x,t), \t)}$ is the characteristic function of $\sc((x,t), \t)$, we see that it is sufficient to 
prove the inequality 
$$
  \left|\sT^\g \chi_{[\cos \t,1]}\big( (x,t), (y,s)\big) \right| \le c  \frac{1}{\sw_{-1,\g} \big(\sc((x,t), \f \t 2)\big)}
      \int_0^\t \left(\sin \f{\phi}{2}\right)^{2\l} \d \phi \chi_{\sc((x,t), \f{\t}2)}(y,s).
$$
Using $\int_0^\t (\sin \f{\phi}{2})^{2\l} \d \phi \sim \t^{2\l+1}$ and the relation \eqref{eq:sw-doubling}, the above
inequality is equivalent to the following inequality, for $(x,t), (y,s) \in \VV_0^{d+1}$, 
\begin{equation} \label{eq:Tchi-bd1}
  \left| \sT^\g \chi_{[\cos \t,1]}\big( (x,t), (y,s)\big) \right |\le c\frac{\t^{2\g+d-1}} {(t+\t^2)^{\f{d-2}{2}} (1-t+\t^2)^{\g +\f12} }
      \chi_{\sc((x,t), \f{\t}2)}(y,s).
\end{equation}
 
We prove this inequality for the case $d > 2$ and $\g > -\f12$. The degenerated cases of $d =2$ and/or 
$\g = -\f12$ can be handled similarly and are easier. By the definition of $\sT^\g$, our main task is to establish 
the inequality
\begin{align} \label{eq:Tchi-bd}
 \mathrm{LH}:= \int_{-1}^1  \int_{-1}^1  \chi_{[\cos \f\t 2,1]} & \left(|\zeta(x,t,y,s;v)| \right) 
        (1-v_1^2)^{\f{d-4}{2}} (1-v_2^2)^{\g-\f12} \d v \\
   &  \qquad\qquad \le c\frac{\t^{2\g +d-1}} {(t+\t^2)^{\f{d-2}{2}} (1-t+\t^2)^{\g +\f12} }  \notag
\end{align}
for $(y,s) \in \sc( (x,t), \tfrac \t 2)$.  
Indeed, since $2 \zeta(x,t,y,s; v)^2 -1 \le \cos \t$ is equivalent to $|\zeta(x,t,y,s;v)| \le \cos \frac{\t}2$, we obtain
$$
  \chi_{[\cos \t,1]} \left(2 \zeta(x,t,y,s;v)^2-1\right)  =   \chi_{[\cos \f\t 2,1]} \left( |\zeta(x,t,y,s;v)|\right). 
$$
Moreover, set $\mathbf{1} = (1,1)$, then $\cos \sd_{\VV_0} ((x,t), (y,s)) = \zeta(x,t,y,s; \mathbf{1}) \ge 0$, which 
implies 
$$
     \chi_{\sc ((x,t),\f{\t}{2})} = \chi_{[\cos \f{\t}2,1]} \left(\zeta(x,t,y,s; \mathbf{1})\right).
$$ 
Now, if $\zeta(x,t,y,s; \mathbf{1}) \le \cos \frac{\t}{2}$, then $\left| \zeta(x,t,y,s; v)\right| \le \cos \frac{\t}{2}$ 
for $-1 \le v_1,v_2 \le 1$. Hence, if $\chi_{\sc((x,t), \f{\t}2)}(y,s) = 0$ then $\chi_{[\cos \f\t 2,1]}(|\zeta(x,t,y,s;v|)) =0$ 
and, consequently, $\sT^\g  \chi_{[\cos \t,1]}\big( (x,t), (y,s)\big)=0$, which justifies the appearance of 
$\chi_{\sc((x,t),\f{\t}{2})}(y,s)$ in the right-hand side of \eqref{eq:Tchi-bd1}, so that it suffices to prove the inequality 
\eqref{eq:Tchi-bd}. Now, let $\eta (t,s; v) =  v_1 \sqrt{t s} + v_2 \sqrt{1-t}\sqrt{1-s}$. Then, using $|\la x, y \ra| \le t s$ for 
$(x,t), (y,s) \in \VV_0^{d+1}$, we see that $| \zeta(x, t, y, s; v)|$ is bounded by both $\eta(t, s; (|v_1|,1))$ and 
$\eta(t, s; (1,|v_2|))$. The argument that justifies the appearance of $\chi_{\sc((x,t),\f{\t}{2})}(y,s)$ in \eqref{eq:Tchi-bd1}
also shows
\begin{align*}
& \chi_{[\cos \f{\t}2,1]}\left( |\zeta(x,t,y,s; v)| \right)  \le  \chi_{[\cos \f \t 2,1]}\left (\eta(t, s; (|v_1|,1)) \right)  
 \chi_{[\cos \f \t 2,1]}\left (\eta(t, s; (1,|v_2|)) \right),
\end{align*}
which implies immediately that 
\begin{align*}
 \mathrm{LH} \,& \le \int_{-1}^1 \chi_{[\cos \f\t 2,1]}\left(|v_1| \sqrt{t s} + \sqrt{1-t}\sqrt{1-s}\right) (1-v_1^2)^{\f{d-4}{2}} \d v_1 \\
     & \,\,\,\,\, \times  \int_{-1}^1 \chi_{[\cos \f \t 2,1]}\left(\sqrt{t s} + |v_2|\sqrt{1-t}\sqrt{1-s}\right) (1-v_2^2)^{\g-\f12} \d v_2 \\
     & = I_1 \times I_2. 
\end{align*}
For $t = \cos^2 \f \t 2$ and $s = \cos^2 \f\phi 2$ in $[0,1]$, the function 
$$
\sd_{[0,1]}(t,s) := \arccos \left(t s + \sqrt{1-t}\sqrt{1-s} \right) = \tfrac12 |\t - \phi|
$$ 
defines a distance on the interval $[0,1]$. From \cite[(4.2)]{X21}, we deduce 
$$
   \sd_{[0,1]}(t,s) \le \sd_{\VV_0}((x,t), (y,s)) \quad \hbox{for} \quad (x,t), (y,s) \in \VV_0^{d+1}.
$$
Hence, if $(y,s) \in \sc( (x,t), \tfrac \t 2)$, then $\sd_{[0,1]}(t,s) \le  \tfrac \t 2$. Thus, the proof of \eqref{eq:Tchi-bd}
is reduced to establish the inequalities 
$$
   I_1  \le c \frac{\t^{d-2}} {(t+\t^2)^{\f{d-2}{2}} } \quad \hbox{and} \quad
   I _2 \le c \frac{\t^{2\g+1}} {(1-t+\t^2)^{\g +\f12} } 
$$
for $t, s$ satisfying $\sd_{[0,1]}(t,s) \le \tfrac \t 2$.

We consider $I_1$ first. If $0 \le \sqrt{t} \le 2 \t$, then $t + \t^2 \sim \t^2$ and the desired bound for $I_1$ holds trivially. 
Hence, we only need to consider the case $\sqrt{t} \ge 2 \t$. Using symmetry and changing variable 
$v_1 \mapsto u = v_1 \sqrt{t s} +  \sqrt{1-t}\sqrt{1-s}$, the integral $I_1$ is bounded by 
\begin{align*}
    & I_1\le  2^{\g + \f12}\int_0^1 \chi_{[\cos \f \t 2,1]}\left( v \sqrt{t s} + \sqrt{1-t}\sqrt{1-s}\right) (1-v)^{\f{d-4}{2}} \d v \\   
    & =  2^{\g + \f12} \int_{\sqrt{1-t}\sqrt{1-s}}^{\sqrt{t s} + \sqrt{1-t}\sqrt{1-s}} \chi_{[\cos \f\t 2, 1]}(u)  
          \left (1- \frac{u- \sqrt{1-t}\sqrt{1-s} }{\sqrt{ ts}} \right)^{\f{d-4}2} \frac{\d u}{\sqrt{t s }} \\
    & \le c \frac{1}{ (\sqrt{t s})^{\f{d-2}{2}} } \int_{\cos \f \t 2}^{\cos \sd_{[0,1]} (t,s)}
         \big( \cos \sd_{[0,1]} (t,s) -  u\big)^{\f{d-4}{2}} \d u \\
    & \le c  \frac{1}{ (\sqrt{t s})^{\f{d-2}{2}} }   \big( \cos \sd_{[0,1]} ( t,s) -  \cos \t \big)^{\f{d-2}{2}} \\
    &  \le c  \frac{1}{ (\sqrt{t s})^{\f{d-2}{2}} } |\sd_{[0,1]} (t,s)-\t| \cdot  |\sd_{[0,1]} (t,s)+\t|  \\
    &  \le c  \frac{\t^{d-2}}{ (\sqrt{t s})^{\f{d-2}{2}} } 
\end{align*} 
for $\sd_{[0,1]} (t,s) \le \f \t 2$. Since $\sqrt{t} \ge 2 \t$, it follows $\sqrt{t} \ge (\sqrt{t} + 2 \t)/2$. Moreover, 
using $t = \cos^2 \f \t 2$ and $s = \cos^2 \f \phi 2$, it is easy to see that $|\sqrt{t} - \sqrt{s} | \le \sd_{[0,1]}(t,s)$, 
so that $\sqrt{s} \ge \sqrt{t} - \f \t 2 \ge 
(\sqrt{t} + \t)/2$ if $(y,s) \in \sc( (x,t), \frac \t 2)$. Thus, $\sqrt{ts} \ge \f14 (t + \t^2)$,
from which the desired upper bound for $I_1$ follows. The proof for $I_2$ follows similarly, using
$|\sqrt{1-t} - \sqrt{1-s} | \le \sd_{[0,1]}(t,s)$. This completes the proof. 
\end{proof}

Since $\CM_\g f$ is dominated by the Hardy-Littlewood maximal function, it follows immediately that it
possesses the usual properties of maximal functions. 

\begin{cor}
Let $\d \ge 2$ and $\g \ge -\f12$. For $f \in L^p(\VV_0^d, \sw_{-1,\g})$, $1 < p < \infty$, 
$$
  \|\CM_\g f\|_{p,\sw_{-1,\g}} \le c_p \| f\|_{p,\sw_{-1,\g}}, \qquad 1 < p < \infty,
$$ 
and, for $f \in L^1(\VV_0^d, \sw_{-1,\g})$ and $\a > 0$, 
$$
   \sw_{-1,\g} \left\{(x,t) \in \VV_0^{d+1}: \CM_\g f(x,t) > \a \right\} \le c \frac{\|f\|_{p, \sw_{-1,\g}}}{\alpha}.
$$
\end{cor}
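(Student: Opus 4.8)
The plan is to read both inequalities off from the pointwise domination $\CM_\g f(x,t) \le c\, M_\g f(x,t)$ proved in Theorem~\ref{thm:CM<M}, combined with the classical mapping properties of the Hardy--Littlewood maximal function on a space of homogeneous type. Once the pointwise bound is available, the statement reduces to the corresponding $L^p$ and weak-type estimates for $M_\g$, which are standard.

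First I would note that $(\VV_0^{d+1}, \sw_{-1,\g}, \sd_{\VV_0})$ is genuinely a homogeneous space in the required sense: the conic caps $\sc((x,t),r)$ are measurable, and the estimate~\eqref{eq:sw-doubling} gives the doubling inequality $\sw_{-1,\g}(\sc((x,t),2r)) \le c\,\sw_{-1,\g}(\sc((x,t),r))$ with $c$ independent of $(x,t)$ and $r$. This is exactly the hypothesis under which the Hardy--Littlewood--Wiener theory applies: a Vitali-type covering argument produces the weak-type $(1,1)$ bound
$$
  \sw_{-1,\g}\{(x,t)\in\VV_0^{d+1}: M_\g f(x,t)>\a\} \le \frac{c}{\a}\,\|f\|_{1,\sw_{-1,\g}},
  \qquad \a>0,
$$
and, together with the trivial bound $\|M_\g f\|_{\infty,\sw_{-1,\g}} \le \|f\|_{\infty,\sw_{-1,\g}}$ and the Marcinkiewicz interpolation theorem, the strong bound $\|M_\g f\|_{p,\sw_{-1,\g}} \le c_p\,\|f\|_{p,\sw_{-1,\g}}$ for $1<p<\infty$ (see, e.g., \cite{Stein}).

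The corollary then follows at once. Since $\CM_\g f(x,t) \le c\, M_\g f(x,t)$ for every $(x,t)\in\VV_0^{d+1}$, taking $L^p$ norms gives $\|\CM_\g f\|_{p,\sw_{-1,\g}} \le c\,\|M_\g f\|_{p,\sw_{-1,\g}} \le c_p\,\|f\|_{p,\sw_{-1,\g}}$, while the inclusion $\{\CM_\g f>\a\}\subset\{M_\g f>\a/c\}$ transfers the weak-type $(1,1)$ estimate. I do not expect any real obstacle here: all the analytic substance is already contained in Theorem~\ref{thm:CM<M} and in the doubling relation~\eqref{eq:sw-doubling}, both established above. The only point deserving a word of care is to check that the abstract maximal theorem is invoked for the correct measure $\sw_{-1,\g}\,\d\s$ and metric $\sd_{\VV_0}$, i.e.\ over the very conic caps whose doubling is recorded in~\eqref{eq:sw-doubling}.
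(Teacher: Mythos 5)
Your proposal matches the paper's own treatment exactly: the paper derives this corollary solely from the pointwise bound $\CM_\g f \le c\, M_\g f$ of Theorem~\ref{thm:CM<M} together with the standard weak-$(1,1)$ and strong $L^p$ mapping properties of the Hardy--Littlewood maximal function on the doubling space $(\VV_0^{d+1}, \sw_{-1,\g}, \sd_{\VV_0})$ guaranteed by \eqref{eq:sw-doubling}. Your added detail (Vitali covering plus Marcinkiewicz interpolation) correctly fills in what the paper leaves implicit.
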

 
\subsection{Multiplier theorem}
The maximal $(C,\delta)$ operator with respect to $\sw_{-1,\g}$ is denoted by, in terms of the $(C,\delta)$ 
operators with respect to $\sw_{-1,\g}$, 
$$
    S_*^\delta (\sw_{-1,\g}; f, (x,t)) = \sup_{n \ge 0}  \left|S_n^\delta (\sw_{-1,\g}; f, (x,t))\right|, \quad (x,t) \in \VV_0^{d+1}.
$$
Because of Theorem \ref{thm:CM<M}, the Assertion 1 holds for $S_*^\delta(\sw_{-1,\g}; f)$ if $\delta \ge \l +1 = d+\g$
by \eqref{eq:S*<CM}. The restriction on $\delta$, however, can be relaxed for the conic surface. 

\begin{thm} \label{thm:AssertionV0}
Let $d\ge 2$ and $\g \ge -\f12$. If $\delta > \g+d-1$ and $f\in L^1(\VV_0^{d+1}, \sw_{-1,\g})$, then for 
every $(x,t) \in \VV_0^{d+1}$, 
$$
    S_*^\delta (\sw_{-1,\g}; f, (x,t))  \le c\, \CM_\g f(x,t). 
$$
\end{thm}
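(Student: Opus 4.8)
The plan is to exploit the convolution representation $S_n^\delta(\sw_{-1,\g};f) = f *_\g k_n^{(\a,\b),\delta}$ from \eqref{eq:(C,delta)}, where $(\a,\b) = (\g+d-\f32,-\f12)$ is the Jacobi index furnished by the addition formula \eqref{eq:sfPbCone}, and to deduce the bound from Theorem \ref{thm:f*g<M}. Writing $\l = \g+d-1 = \a+\f12$, so that the critical index is $\delta_0 = \a+\f12 = \l$, the goal reduces to producing, for each $n$, a nonnegative, continuous, decreasing majorant $k_n(\t)$ of $\big|k_n^{(\a,\b),\delta}(\cos\t,1)\big|$ on $[0,\pi]$ whose weighted integral $\int_0^\pi k_n(\phi)\, w_{\a,\b}(\cos\phi)\sin\phi\,\d\phi$ is bounded uniformly in $n$. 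Once such a majorant is in hand, the positivity of the operator $\sT^\g$ gives $\big|f *_\g k_n^{(\a,\b),\delta}\big| \le |f| *_\g k_n$, and applying Theorem \ref{thm:f*g<M} to $|f|$ yields $\big|S_n^\delta(\sw_{-1,\g};f,(x,t))\big| \le c\,\CM_\g f(x,t)$ with $c$ independent of $n$; taking the supremum over $n$ then gives the assertion.

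The crux is therefore the sharp pointwise estimate of the Cesàro--Jacobi kernel down to the critical index $\delta > \l$, and this is exactly where the explicit form of the addition formula enters. Since $\b = -\f12$, the quadratic transformation \eqref{eq:Z-quadr-trans} gives $Z_k^{(\a,-\f12)}(\cos\t) = Z_{2k}^{\l}\big(\cos\tfrac\t2\big)$, so that
\[
 k_n^{(\a,\b),\delta}(\cos\t,1) = \sum_{k=0}^n \frac{\binom{n-k+\delta}{n-k}}{\binom{n+\delta}{n}}\, Z_{2k}^{\l}\big(\cos\tfrac\t2\big).
\]
As $\t$ runs over $[0,\pi]$ the argument $\cos\tfrac\t2$ runs over $[0,1]$, so the kernel only ever samples the ultraspherical kernel of index $\l$ on the half-range $[0,1]$, i.e.\ away from the antipodal endpoint. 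This is the decisive structural point: for a generic Jacobi weight the kernel $k_n^{(\a,\b),\delta}(\cos\t,1)$ carries a second peak of order $n^{\a+\b+2}$ near $\t=\pi$, and suppressing it under a decreasing majorant forces $\delta \ge \a+\b+2$ (the range in Theorem \ref{thm:weak(C,d)bound}); here that peak is absent (the even-order truncation $\sum_k(\cdots)Z_{2k}^\l$ is handled by symmetrizing in $\zeta \mapsto -\zeta$, which kills the odd terms and reduces it to the ordinary ultraspherical Cesàro kernel on $[0,1]$), and the classical sharp estimate for that kernel yields, for $\delta > \l$ and $0 \le \t \le \pi$,
\[
  \big|k_n^{(\a,\b),\delta}(\cos\t,1)\big| \le c\,\frac{n^{2\l+1}}{(1+n\t)^{\l+\delta+1}} =: k_n(\t).
\]

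With this estimate the verification is routine. The majorant $k_n$ is plainly nonnegative, continuous, and decreasing on $[0,\pi]$; using the identity $w_{\a,\b}(\cos\phi)\sin\phi = 2^\l(\sin\tfrac\phi2)^{2\l} \sim \phi^{2\l}$ near $0$ and the substitution $u = n\phi$, one finds
\[
   \int_0^\pi k_n(\phi)\, w_{\a,\b}(\cos\phi)\sin\phi\,\d\phi \le c\int_0^\infty \frac{u^{2\l}}{(1+u)^{\l+\delta+1}}\,\d u \le c,
\]
the integral converging precisely because $\l+\delta+1 > 2\l+1$, that is $\delta > \l$. The main obstacle is thus the sharp kernel bound of the second paragraph: one must control $k_n^{(\a,\b),\delta}(\cos\t,1)$ across the whole interval $[0,\pi]$ with the sharp decay exponent $\l+\delta+1$ valid down to $\delta>\l$, rather than the cruder exponent $2\a+3$ of Theorem \ref{thm:weak(C,d)bound}; I would obtain it from the classical pointwise estimates of the ultraspherical Cesàro kernel together with the reduction above. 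Finally, the degenerate parameter values $d=2$ and/or $\g=-\f12$, where the measure $(1-v_1^2)^{\f{d-4}2}$ or $(1-v_2^2)^{\g-\f12}$ in $\sT^\g$ collapses, are absorbed through the limiting convention \eqref{eq:limitInt}, exactly as in the addition formula \eqref{eq:sfPbCone}.
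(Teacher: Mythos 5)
Your proposal is correct and follows essentially the same route as the paper: both reduce the claim, via the convolution representation \eqref{eq:(C,delta)}, to a pointwise bound of $k_n^{(\l-\f12,-\f12),\delta}(\cos\t,1)$ by a nonnegative decreasing majorant whose $w_{\a,\b}$-weighted integral is uniformly bounded precisely when $\delta>\l$, and then invoke Theorem \ref{thm:f*g<M}; the paper simply quotes Szeg\H{o}'s estimates (9.4.4) and (9.41.14) for that kernel instead of deriving them. One small caveat in your sketch of the kernel bound: symmetrizing the ultraspherical Ces\`aro kernel of order $2n$ in $\zeta\mapsto-\zeta$ does kill the odd terms, but it produces the coefficients $\binom{2(n-k)+\delta}{2(n-k)}/\binom{2n+\delta}{2n}$ rather than $\binom{n-k+\delta}{n-k}/\binom{n+\delta}{n}$, so the even part is not literally $k_n^{(\l-\f12,-\f12),\delta}$; the stated estimate $c\,n^{2\l+1}(1+n\t)^{-(\l+\delta+1)}$ is nevertheless the standard sharp bound for the Jacobi Ces\`aro kernel with $\b=-\f12$ and is best cited directly, as the paper does.
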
 

\begin{proof}
Let $\l = \g+d-1$. Since the case $\delta \ge \l+1$ is already established, we only need to consider $\l < \delta < \l +1$. 
By the estimate of $k_n^\delta(w_{\a,\b}; u,1)$ in \cite[(9.4.4) and (9.41.14)]{Sz}, 
\begin{align*}
  \left| k_n^\delta(w_{\l-\f12,-\f12}; \cos \t,1) \right| \,& \le c n^{\l - \delta} \left[ (1-\cos\t +n^{-2})^{- (\delta + \l+\f12)/2} +1 \right] \\
       & \le c n^{\l - \delta} \left[ (n^{-1} + \t)^{- (\delta + \l+\f12)} +1 \right] =: k_n(\t).
\end{align*}
It is easy to see that $k(\t)$ satisfy the conditions of Theorem \ref{thm:f*g<M}. Moreover, 
$$
   \int_0^\pi k_n(\phi) \left (\sin \frac \phi 2\right )^{2\l} \d \phi \le 
       c n^{\l-\delta} \int_0^\pi  \left[ (n^{-1} + \phi)^{- (\delta + \l+\f12)} +1 \right] \phi^{2\l} \d \t
$$ 
is bounded by a constant depending only on $\g$ and $d$, as can be easily seen by considering 
$0 \le \phi \le n^{-1}$ and $\phi \ge n^{-1}$ separately. Consequently, we obtain 
$$
  \left|S_n^\delta \left(\sw_{-1,\g}; f, (x,t)\right)\right|  \le c \CM_\g f(x,t)
$$
for all $n \ge 0$. Taking supreme over $n$ completes the proof. 
\end{proof}

This shows, by Theorem \ref{thm:CM<M}, that the Assertion 1 holds for $\delta > \l$. In particular, by
Theorem \ref{thm:multiplier}, the following multiplier theorem holds on the conic surface.

\begin{thm}\label{thm:multiplierV0} 
Let $d\ge 2$ and $\g \ge -\f12$.  Let $\{\mu_j\}_{j=0}^\infty$ be a bounded sequence of real numbers such that 
$$
  \sup_{j \ge 0} 2^{j(k-1)} \sum_{\ell =2^j +1}^{2^{j+1}} \left|\triangle^k \mu_{\ell}\right| \le M < \infty
$$
for $k \ge \lfloor d+\g\rfloor$. Then $\{\mu_j\}$ defines an $L^p(\VV_0, \sw_{-1,\g})$ multiplier for $1 < p < \infty$; that is
$$
   \left \| \sum_{k =0}^\infty \mu_k \proj_k (\sw_{-1,\g}; f)\right \|_{p,\sw_{-1,\g}} \le c  \|f\|_{p,\sw_{-1,\g}}, \quad 1 < p < \infty, 
$$
where $c$ is independent of $\mu_j$ and $f$. 
\end{thm}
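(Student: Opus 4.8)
The plan is to verify that the conic surface $(\VV_0^{d+1}, \sw_{-1,\g}, \sd_{\VV_0})$ fits exactly into the hypotheses of the general multiplier theorem, Theorem~\ref{thm:multiplier}, and then to invoke that theorem directly. First I would record the parameters supplied by the addition formula. By Theorem~\ref{thm:sfPbCone2}, the reproducing kernel $\sP_n(\sw_{-1,\g};\cdot,\cdot)$ satisfies the addition formula \eqref{eq:7Pn} with $\a = \g+d-\f32$ and $\b = -\f12$. Since $d \ge 2$ and $\g \ge -\f12$ force $\a \ge 0 > -\f12 = \b$, the ordering $\a \ge \b$ required by Definition~\ref{defn:additionF} holds, and the index appearing in Assertion~1 becomes $\delta_0 = \max\{\a,\b\}+\f12 = \g+d-1 = \l$. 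Consequently $\lfloor \delta_0 +1\rfloor = \lfloor d+\g\rfloor$, so the hypothesis $k \ge \lfloor d+\g\rfloor$ of the present theorem is precisely the hypothesis $k \ge \lfloor \delta_0+1\rfloor$ demanded by Theorem~\ref{thm:multiplier}.

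The second, and only substantive, step is to confirm that Assertion~1 holds on the conic surface with this $\delta_0$; that is, that $S_*^\delta(\sw_{-1,\g}; f) \le c\, M_\g f$ for every $\delta > \delta_0 = \l$. This I would obtain by chaining the two principal results of this section. Theorem~\ref{thm:AssertionV0} gives $S_*^\delta(\sw_{-1,\g}; f,(x,t)) \le c\,\CM_\g f(x,t)$ for all $\delta > \l$, and Theorem~\ref{thm:CM<M} gives $\CM_\g f(x,t) \le c\, M_\g f(x,t)$. Composing these two inequalities yields $S_*^\delta(\sw_{-1,\g}; f) \le c\, M_\g f$ for $\delta > \l = \delta_0$, which is exactly the content of Assertion~1 for $(\VV_0^{d+1}, \sw_{-1,\g})$.

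With the addition formula and Assertion~1 both in hand, the proof concludes by a direct appeal to Theorem~\ref{thm:multiplier}: the stated Marcinkiewicz condition on $\{\mu_j\}$ with $k \ge \lfloor d+\g\rfloor = \lfloor \delta_0+1\rfloor$ is the hypothesis of that theorem, whose conclusion is the desired $L^p(\VV_0^{d+1},\sw_{-1,\g})$ bound for $1 < p < \infty$. I do not expect a genuine obstacle at this assembly stage: the analytic difficulty has already been absorbed into Theorem~\ref{thm:CM<M}, which bounds the analytically defined maximal function $\CM_\g$ by the geometric Hardy-Littlewood maximal function $M_\g$, and into the sharp range of Theorem~\ref{thm:AssertionV0}. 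The one point that truly deserves care is exactly that sharp range. It is essential that Theorem~\ref{thm:AssertionV0} improves the crude bound $\delta \ge \l+1$, which is all that \eqref{eq:S*<CM} would provide since $\a+\b+2 = \l+1$, down to $\delta > \l$; otherwise Assertion~1 would be available only with the effective index $\l+1$, and the multiplier theorem would then require the larger value $k \ge \lfloor d+\g\rfloor + 1$ rather than the sharp value $k \ge \lfloor d+\g\rfloor$ claimed here.
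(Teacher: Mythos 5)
Your proposal is correct and follows exactly the paper's own route: identify $\a=\g+d-\f32$, $\b=-\f12$ so that $\delta_0=\l=\g+d-1$ and $\lfloor\delta_0+1\rfloor=\lfloor d+\g\rfloor$, establish Assertion~1 by composing Theorem~\ref{thm:AssertionV0} with Theorem~\ref{thm:CM<M}, and then invoke Theorem~\ref{thm:multiplier}. Your remark that the sharpened range $\delta>\l$ of Theorem~\ref{thm:AssertionV0} (rather than the crude $\delta\ge\l+1$ from \eqref{eq:S*<CM}) is what yields the stated value of $k$ is precisely the point the paper emphasizes.
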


\section{Fourier orthogonal series on the solid conic}
\setcounter{equation}{0}

In this section we work with the cone $\RR^{d+1}$ defined, for $d \ge 1$, by 
$$
     \VV^{d+1}= \{(x,t): \|x\| \le t, \, x \in \RR^d, \, 0 \le t \le 1\},
$$
which is the domain bounded by the conic surface $\VV_0^{d+1}$ and the hyperplane $t =1$. The development 
on this domain is similar to that on the conic surface but has another layer of complication. The structure of this 
section is parallel to that of the previous one. 

\subsection{Analysis on the conic}
For $\mu \ge 0$ and $\g \ge -\f12$, let $W_{\g,\mu}$ be the weight function defined by 
$$
   W_{\g,\mu}(x,t) = (1-\|x\|^2)^{\mu-\f12} (1-t)^\g,  \qquad (x,t) \in \VV^{d+1}. 
$$
We consider the homogeneous space $(\VV^{d+1}, W_{\g,\mu}, \sd_{\VV})$, where 
$\sd_{\VV}$ is the distance function on $\VV^{d+1}$, defined by
\begin{equation*}
  \sd_{\VV} ((x,t), (y,s)) : =  \arccos \left(\sqrt{\frac{\la x,y\ra+\sqrt{t^2-\|x\|^2}\sqrt{s^2-\|y\|^2} +t s }2}  + \sqrt{1-t}\sqrt{1-s}\right)
\end{equation*}
for $(x,t), (y,s) \in \VV^{d+1}$. For $(x,t) \in \VV^{d+1}$ and $r > 0$, the ball centered at $(x,t)$ with radius $r$ is 
denoted by 
$$
      \cb((x,t), r): = \left\{ (y,s) \in \VV^{d+1}: \sd_{\VV} \big((x,t),(y,s)\big)\le r \right\}.
$$   
Then $W_{\mu,\g}$ is a doubling weight with respect to $\sd_{\VV}$ since it satisfies \cite[(5.5)]{X21}
\begin{align} \label{eq:W-doubling}
 W_{\g,\mu} \big(\cb((x,t), r)\big):= \,& \bb_{\g,\mu} \int_{ \cb((x,t), r)} W_{\g,\mu} (y,s) \d y \d s  \\
          \sim \, & r^{d+1} (t+ r^2)^{\f{d-1}{2}} (1-t+ r^2)^{\g+\f12} (t^2-\|x\|^2+ r^2)^{\mu}, \notag
\end{align}
where $\bb_{\mu,\g}$ is the normalized constant defined by $\bb_{\mu,\g} \int_{\VV^{d+1}} W_{\g,\mu} (x,t) \d x \d t =1$.

The orthogonality on $L^2(\VV^{d+1},W_{\g,\mu})$ is defined in terms of the inner product 
$$
\la f, g\ra_{W} =\bb_{\mu,\g} \int_{\VV^{d+1}} f(x,t) g(x,t) W_{\g,\mu}(x,t)\d x \d t.
$$ 
The orthogonal structure is studied in \cite{X20a} for a family of more general weight functions
$t^\b (1-t)^\g (t^2-\|x\|^2)^{\mu-\f12}$ with $\b > -d$ and $\g > -1$ and $\mu > -\f12$. We consider $\b = 0$, 
$\g \ge -\f12$ and $\mu \ge 0$ because the addition formula for $W_{\g,\mu}$ is more manageable.

Let $\CV_n(\VV^{d+1},W_{\g,\mu})$ be the space of orthogonal polynomials of degree $n$. The reproducing
kernel $\Pb_n(W_{\g,\mu}; \cdot,\cdot)$ of this space satisfies a closed form formula given below.

\begin{thm} \label{thm:PnCone2}
Let $d \ge 1$, $\mu \ge 0$ and $\g \ge -\f12$. Then 
\begin{align}\label{eq:PbCone2}
  \Pb_n \big(W_{\g,\mu}; (x,t), (y,s)\big) =\, & 
   c_{\mu,\g,d} \int_{[-1,1]^3}  Z_n^{(\g+d+2 \mu-\f12,-\f12)} \left(2 \xi (x, t, y, s; u, v)^2-1\right) \\
     &\times   (1-u^2)^{\mu-1} (1-v_1^2)^{\mu + \f{d-3}2}(1-v_2^2)^{\g-\f12}  \d u \d v, \notag
\end{align}
where $c_{\mu,\g,d}$ is a constant, so that $\Pb_0 =1$ and 
$\xi (x,t, y,s; u, v) \in [-1,1]$ is defined by 
\begin{align} \label{eq:xi}
\xi (x,t, y,s; u, v) = &\, v_1 \sqrt{\tfrac12 \left(ts+\la x,y \ra + \sqrt{t^2-\|x\|^2} \sqrt{s^2-\|y\|^2} \, u \right)}\\
      & + v_2 \sqrt{1-t}\sqrt{1-s}. \notag
\end{align}
When $\mu = 0$ and $d =1$ or $\mu =0$ and/or $\g = -\f12$, the identity \eqref{eq:PbCone2} holds under the limit \eqref{eq:limitInt}.
\end{thm}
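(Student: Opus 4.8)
The plan is to follow the same strategy that produces the conic-surface formula \eqref{eq:sfPbCone} of \thmref{thm:sfPbCone2}, treating the extra factor $(t^2-\|x\|^2)^{\mu-\f12}$ as a bundle of hidden dimensions. First I would fix the orthogonal structure by the substitution $x = t y$ with $y \in \BB^d$. Writing $t^2-\|x\|^2 = t^2(1-\|y\|^2)$ and $\d x = t^d\, \d y$, the inner product $\la f,g\ra_W$ becomes the integral of $(1-\|y\|^2)^{\mu-\f12}(1-t)^\g\, t^{2\mu+d-1}$ against $\d y\, \d t$, that is, a product of a ball measure in $y$ and a one-dimensional Jacobi-type measure in $t$. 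Consequently an orthogonal basis of $\CV_n(\VV^{d+1},W_{\g,\mu})$ can be built from the orthogonal polynomials on $\BB^d$ for the weight $(1-\|y\|^2)^{\mu-\f12}$, composed with $x/t$ and rescaled by suitable powers of $t$, times Jacobi polynomials in $t$; a degree count shows that the inner ball-degree $j$ and the outer index together range over $0\le j\le n$ and exhaust $\CV_n$.

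Second, I would expand $\Pb_n$ over this basis and group the terms by the inner ball-degree $j$. For each fixed $j$ the ball part sums to the reproducing kernel of $\CV_j(\BB^d,(1-\|y\|^2)^{\mu-\f12})$, for which the classical addition formula of \cite{DX, X99} applies: it writes that kernel as $c\int_{-1}^1 Z_j^{\mu+\f{d-1}2}\big(\la y,z\ra+\sqrt{1-\|y\|^2}\sqrt{1-\|z\|^2}\,u\big)(1-u^2)^{\mu-1}\,\d u$. This single integral is exactly the source of the $u$-integral and the factor $(1-u^2)^{\mu-1}$ in \eqref{eq:PbCone2}, and it raises the effective Gegenbauer index to $\mu+\f{d-1}2$; in effect it replaces the dimension $d$ of the conic-surface case by $d+2\mu$, which is precisely what accounts for the exponents $\g+d+2\mu-\f12$ and $\mu+\f{d-3}2$ in the statement.

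Third comes the summation over the remaining Jacobi index. After the ball step the argument of the inner Gegenbauer polynomial has the form $\sqrt{\tfrac12\big(ts+\la x,y\ra+\sqrt{t^2-\|x\|^2}\sqrt{s^2-\|y\|^2}\,u\big)}$, and the outer sum over Jacobi polynomials in $t$ must be collapsed to closed form exactly as on the conic surface. I would use the quadratic transformation \eqref{eq:Z-quadr-trans} to pass to a Gegenbauer sum of even degree, then apply the product/generating identity for Gegenbauer polynomials; this yields the $v_2$-integral with weight $(1-v_2^2)^{\g-\f12}$, which absorbs the $(1-t)^\g$ factor, together with the $v_1$-integral carrying the $t$-scaling and weight $(1-v_1^2)^{\mu+\f{d-3}2}$. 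Combining the three integral layers and matching the nested radicals shows that the composite argument is precisely $2\xi^2-1$ with $\xi$ as in \eqref{eq:xi}. The admissibility $\xi\in[-1,1]$ then follows from two applications of Cauchy--Schwarz: with $|u|\le1$ one has $ts+\la x,y\ra+\sqrt{t^2-\|x\|^2}\sqrt{s^2-\|y\|^2}\,u\le ts+\|x\|\|y\|+\sqrt{t^2-\|x\|^2}\sqrt{s^2-\|y\|^2}\le 2ts$, so the first radical is at most $\sqrt{ts}$, and then $|\xi|\le \sqrt{ts}+\sqrt{1-t}\sqrt{1-s}\le 1$. The constant $c_{\mu,\g,d}$ is fixed by normalizing the $n=0$ kernel to $1$ via a product of Beta integrals, and the degenerate cases $\mu=0$, $d=1$, $\g=-\f12$, where an exponent reaches $-1$, are interpreted through the endpoint limit \eqref{eq:limitInt}.

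I expect the third step to be the main obstacle: the bookkeeping that collapses the three nested arguments into the single $\xi$, and the correct tracking of the $t$-powers through the quadratic transformation, is the delicate part, whereas the $u$-integral and the admissibility estimate are routine. A cleaner conceptual organization, which I would record as a remark, is to derive \eqref{eq:PbCone2} directly from \eqref{eq:sfPbCone} in effective dimension $d+2\mu$: the ball-to-sphere integral identity of \cite{DX} converts surface measure into the ball weight $(1-\|y\|^2)^{\mu-\f12}$, so \thmref{thm:sfPbCone2} applied in the higher dimension, followed by integrating out the hidden variables, reproduces the formula, with non-integer $2\mu$ legitimized by analytic continuation in the dimension parameter since both sides are analytic in $\mu$.
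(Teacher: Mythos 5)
The paper does not actually prove Theorem~\ref{thm:PnCone2}: it quotes the closed formula from \cite{X20a} and merely rewrites $Z_{2n}^{\l}$ as $Z_n^{(\l-\f12,-\f12)}$ via \eqref{eq:Z-quadr-trans}. Measured against the derivation in \cite{X20a}, your first two steps are the correct setup: the substitution $x=ty$ splits $W_{\g,\mu}$ into the ball weight $(1-\|y\|^2)^{\mu-\f12}$ times $t^{2\mu+d-1}(1-t)^\g$; a basis of $\CV_n(\VV^{d+1},W_{\g,\mu})$ is given by $P^{(2j+2\mu+d-1,\g)}_{n-j}(1-2t)\,t^jP^j_\nu(x/t)$; and the ball addition formula of \cite{X99, DX} is exactly what produces the $u$-integral against $(1-u^2)^{\mu-1}$ and lifts the Gegenbauer index to $\mu+\f{d-1}2$. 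The genuine gap is your third step. After the ball kernel is inserted one is left with a sum over $j$ of products $P^{(2j+2\mu+d-1,\g)}_{n-j}(1-2t)\,P^{(2j+2\mu+d-1,\g)}_{n-j}(1-2s)\,(ts)^{j}\,Z_j^{\mu+\f{d-1}2}(\cos\psi)$ with $j$-dependent Jacobi parameters, and collapsing this into the single term $Z_n^{(\g+d+2\mu-\f12,-\f12)}(2\xi^2-1)$ integrated against $(1-v_1^2)^{\mu+\f{d-3}2}(1-v_2^2)^{\g-\f12}$ is the entire analytic content of the theorem. It does not follow from the Gegenbauer product formula or from \eqref{eq:Z-quadr-trans}; in \cite{X20a} it rests on a separately proved two-parameter summation identity whose proof is the bulk of the work. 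Writing ``apply the product/generating identity'' at exactly this point leaves the theorem unproved. (Your admissibility check $\xi\in[-1,1]$ via Cauchy--Schwarz is fine.)

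Your closing remark is in fact the cleanest legitimate route, and I would promote it from a remark to the proof --- with two corrections. The correct correspondence is $\VV^{d+1}\leftrightarrow\VV_0^{D+1}$ with $D=d+2\mu+1$, not $d+2\mu$: integrating the last $2\mu+1$ coordinates of $x\in t\,\SS^{D-1}$ out of the surface measure on $\VV_0^{D+1}$ with weight $t^{-1}(1-t)^\g$ produces precisely $(t^2-\|x\|^2)^{\mu-\f12}(1-t)^\g$ on $\VV^{d+1}$, and then the exponents in \eqref{eq:sfPbCone} match those in \eqref{eq:PbCone2} on the nose, $\g+D-\f32=\g+d+2\mu-\f12$ and $\f{D-4}2=\mu+\f{d-3}2$, while averaging the $D$-dimensional inner product over the hidden spheres yields $\la x,y\ra+u\sqrt{t^2-\|x\|^2}\sqrt{s^2-\|y\|^2}$ against $(1-u^2)^{\mu-1}$. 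But this only proves \eqref{eq:PbCone2} for $\mu\in\f12\ZZ_{\ge 0}$; the extension to all $\mu\ge0$ by ``analytic continuation in the dimension'' requires an actual argument --- half-integers have no accumulation point, so one needs Carlson-type growth bounds in $\mu$, or a direct verification that the right-hand side of \eqref{eq:PbCone2} is, for each fixed $(y,s)$, a polynomial of degree $n$ in $(x,t)$ orthogonal to $\Pi_{n-1}$ with the reproducing property. As written, both of your routes stop exactly where the work begins.
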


The identity \eqref{eq:PbCone2} is stated in \cite{X20a} in terms of $Z_{2n}^{\l}$, which we have rewritten 
using \eqref{eq:Z-quadr-trans}. This gives the addition formula \eqref{eq:7Pn} with $\a = \g+d+2 \mu-\f12$ and 
$\b = -\f12$. Following Definition \ref{defn:7convol}, the convolution of $f \in L^1(\VV^{d+1}, W_{\g,\mu})$ 
and $g \in L^1([-1,1],w_{\g+d+2 \mu-\f12,-\f12})$, denoted by $f *_{\g,\mu} g$, is defined by 
$$
 f *_{\g,\mu} g (x,t) := \bb_{\mu,\g} \int_{\VV^{d+1}} f(y,s) \Tb^{\g,\mu} g \big((x,t),(y,s) \big) W_{\g,\mu}(y,s) \d y \d s,
$$
where the operator $g \mapsto \Tb^{\g,\mu} g$ is defined by  
\begin{align*}
   \Tb^{\g,\mu} g\big((x,t),(y,s) \big):= \,& c_{\mu,\g,d}  \int_{[-1,1]^3}  g \left(2 \xi(x,t,y,s; u,v)^2-1 \right)  \\
         &\times   (1-u^2)^{\mu-1} (1-v_1^2)^{\a - 1}(1-v_2^2)^{\g-\f12}  \d u \d v. \notag
\end{align*}
The maximal function $\CM_\sw$ defined in \eqref{eq:CMw}, denote by $\bCM_{\mu,\g}$ for $W_{\g,\mu}$, is given by
$$
   \bCM_{\g,\mu} f(x,t) : = \sup_{0 \le \t \le \pi} \frac{f *_{\g,\mu} \chi_{[0,\t]} (x,t)}
         { 2^\l \int_0^\t \big(\sin \f{\phi}{2}\big)^{2\l} \d \phi}
    \quad\hbox{with}\quad \l = \g + d + 2 \mu.
$$

Let $ \Qb_n(W_{\g,\mu}; f)$ denote the Poisson integral operator on $\VV^{d+1}$, defined as in \eqref{eq:Poisson} 
with $\sw = W_{\g,\mu}$. As in  the case of conic surface, the above maximal function is equivalent to the maximal 
Poisson integral operator
$$
  \Qb_*(W_{\g,\mu}; f, (x,t)) = \sup_{0 < r<1}  \Qb_n(W_{\g,\mu}; |f|, (x,t)). 
$$
Indeed, the Poisson operator is again the convolution of $f$ with $q_r^{(\l-\f12,-\f12)}$, where $\l = \g + d + 2 \mu$,
and the proof of Theorem \ref{thm:sQr=CM} requires only an estimate on $q_r^{(\l-\f12,-\f12)}$; hence, the proof of 
the theorem below follows exactly as in the case of the conic surface. 

\begin{thm}
Let $d \ge 1$, $\g \ge -\f12$ and $\mu \ge 0$. If $f \in L^1(\VV^{d+1}; W_{\g,\mu})$, then 
for every $(x,t) \in \VV_0^{d+1}$, 
$$
    c_1 \bCM_{\g,\mu} f(x,t) \le \Qb_*(W_{\g,\mu}; f, (x,t))  \le c_2 \bCM_{\g,\mu} f(x,t).
$$
\end{thm}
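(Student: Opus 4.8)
The plan is to transplant the proof of Theorem~\ref{thm:sQr=CM} verbatim, changing only the ambient data from the conic surface to the solid cone. Put $\l = \g+d+2\mu$. The key structural fact, recorded already in the paragraph preceding the statement, is that by \eqref{eq:Poisson} and \eqref{eq:proj=f*g} the Poisson operator on $\VV^{d+1}$ is the convolution
$$
   \Qb_r(W_{\g,\mu}; f) = f *_{\g,\mu} q_r^{(\l-\f12,-\f12)},
$$
with the very same one-variable Jacobi Poisson kernel $q_r^{(\l-\f12,-\f12)}$ of \eqref{eq:q_rPoisson}, merely evaluated at the new value of $\l$. Because the addition formula \eqref{eq:PbCone2} and the convolution of Definition~\ref{defn:7convol} enter only through the Jacobi parameters $(\a,\b)=(\l-\f12,-\f12)$, every kernel estimate from the conic-surface argument carries over unchanged except for this value of $\l$; the geometry of $\VV^{d+1}$ plays no role in the equivalence.

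For the upper bound $\Qb_*(W_{\g,\mu};f)\le c_2\,\bCM_{\g,\mu}f$ I would argue exactly as in Theorem~\ref{thm:PoissonM}. From the elementary inequality \eqref{eq:poisson-lwbd} the kernel admits the majorant
$$
  q_r^{(\l-\f12,-\f12)}(\cos\t) \le \frac{1-r}{\big((1-\sqrt r)^2+\t^2\big)^{\l}} =: k(\t),
$$
and this $k$ is continuous, nonnegative and decreasing on $[0,\pi]$, so Theorem~\ref{thm:f*g<M} applies to $f*_{\g,\mu}q_r^{(\l-\f12,-\f12)}$ and bounds it by $c\,\bCM_{\g,\mu}f(x,t)$ with a constant independent of $r$; taking the supremum over $r\in(0,1)$ gives the right-hand inequality.

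For the lower bound $c_1\,\bCM_{\g,\mu}f\le\Qb_*(W_{\g,\mu};f)$ I would reproduce the kernel estimate from the proof of Theorem~\ref{thm:sQr=CM}. Using \eqref{eq:poisson-lwbd} once more, for $1-\sqrt r\sim\t$ and $0\le\phi\le\t$ one gets the matching lower bound $q_r^{(\l-\f12,-\f12)}(\cos\phi)\ge c\,\t^{-2\l-1}$, whence
$$
   \chi_{[\cos\t,1]}(\cos\phi) \le \t^{2\l+1}\,q_r^{(\l-\f12,-\f12)}(\cos\phi), \qquad 0\le\phi\le\pi.
$$
Convolving this pointwise inequality against $|f|$ through $*_{\g,\mu}$, recalling $\int_0^\t(\sin\tfrac\phi2)^{2\l}\,\d\phi\sim\t^{2\l+1}$, and choosing $r\sim(1-c\t)^2$ yields
$$
   \frac{|f|*_{\g,\mu}\chi_{[\cos\t,1]}(x,t)}{\t^{2\l+1}} \le c\,|f|*_{\g,\mu}q_r^{(\l-\f12,-\f12)}(x,t) \le c\,\Qb_*(W_{\g,\mu};f,(x,t)).
$$
Taking the supremum over $0\le\t\le\pi$ finishes the left-hand inequality.

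I expect no genuine obstacle here. The one point needing a line of justification is that the solid-cone convolution $*_{\g,\mu}$ carries the positivity and the translation-operator representation tacitly used above, but these follow from Definition~\ref{defn:7convol} and Proposition~\ref{prop:Stheta} specialized to $(\a,\b)=(\l-\f12,-\f12)$, just as on the conic surface. The entire argument is insensitive to the domain and reduces to the one-dimensional kernel bounds, so the work is essentially the bookkeeping of substituting $\l=\g+d+2\mu$, the weight $W_{\g,\mu}$, and the convolution $*_{\g,\mu}$ into a proof whose analytic content lives entirely in the variable $\t$.
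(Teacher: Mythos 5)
Your proposal is correct and follows essentially the same route as the paper, which likewise observes that $\Qb_r(W_{\g,\mu};f)=f*_{\g,\mu}q_r^{(\l-\f12,-\f12)}$ with $\l=\g+d+2\mu$ and that the proof of Theorem~\ref{thm:sQr=CM} depends only on the one-dimensional estimates for $q_r^{(\l-\f12,-\f12)}$, so it transfers verbatim to the solid cone. The paper states this in one line without repeating the computation; your write-up simply makes the same transfer explicit.
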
 

\subsection{Boundedness of the Maximal function}
The Hardy-Littlewood maximal function for $(\VV^{d+1}, W_{\g,\mu}, \sd_{\VV})$ is defined by 
$$
  \Mb_{\g,\mu} f(x,t) := \sup_{0 \le \t \le \pi} \frac{1}{W_{\g,\mu} \big(\cb((x,t), \t)\big)} \int_{\cb((x,t), \t)} |f(y,s)| 
     W_{\g,\mu}(y,s) \d y \d s, 
$$ 
where $\cb((x,t),\t)$ is the ball on the cone. We show that the maximal function $\bCM_{\g,\mu} f$ is bounded by 
$\Mb_{\g,\mu} f$, just like in the case of conic surface. 

\begin{thm}\label{thm:CM<M-V}
Let $d \ge 1$, $\g\ge -\f12$ and $\mu \ge 0$. For $f \in L^1 \left(\VV^{d+1}, W_{\g,\mu}\right)$, 
$$
   \bCM_{\g,\mu} f(x,t) \le c \, \Mb_{\g,\mu} f(x,t), \qquad (x,t) \in \VV^{d+1}. 
$$
\end{thm}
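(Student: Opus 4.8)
The plan is to run the proof of Theorem~\ref{thm:CM<M} on the conic surface, adapted to the three-fold addition formula \eqref{eq:PbCone2} and to the extra factor $(t^2-\|x\|^2+r^2)^\mu$ in the doubling relation \eqref{eq:W-doubling}. Writing the average defining $\Mb_{\g,\mu}f$ with the integral over $\cb((x,t),\t)$ re-expressed as an integral over all of $\VV^{d+1}$ against the characteristic function $\chi_{\cb((x,t),\t)}$, and recalling that $\bCM_{\g,\mu}f(x,t)=\sup_\t (f*_{\g,\mu}\chi_{[\cos\t,1]})(x,t)\big/\big(2^\l\int_0^\t(\sin\tfrac\phi2)^{2\l}\,\d\phi\big)$ with $\l=\g+d+2\mu$, I would reduce the theorem, exactly as \eqref{eq:Tchi-bd1} on the surface, to the pointwise kernel estimate
\begin{equation*}
  \left|\Tb^{\g,\mu}\chi_{[\cos\t,1]}\big((x,t),(y,s)\big)\right|
   \le c\,\frac{\t^{2\g+d+4\mu}}{(t+\t^2)^{\f{d-1}2}(1-t+\t^2)^{\g+\f12}(t^2-\|x\|^2+\t^2)^{\mu}}\,\chi_{\cb((x,t),\f\t2)}(y,s),
\end{equation*}
where the new denominator factor is precisely the one introduced in \eqref{eq:W-doubling}, and where I use $\int_0^\t(\sin\tfrac\phi2)^{2\l}\d\phi\sim\t^{2\l+1}$ so that dividing by $W_{\g,\mu}(\cb((x,t),\tfrac\t2))$ produces the displayed right-hand side.

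The factor $\chi_{\cb((x,t),\f\t2)}$ is justified as before: the condition $2\xi^2-1\ge\cos\t$ is equivalent to $|\xi|\ge\cos\tfrac\t2$, and since $\cos\sd_\VV((x,t),(y,s))=\xi(x,t,y,s;1,\mathbf{1})$ by \eqref{eq:xi} while $|\xi(x,t,y,s;u,v)|\le\cos\sd_\VV((x,t),(y,s))$ for all $u,v\in[-1,1]$ (the inner radical is increasing in $u$ and $|v_1|,|v_2|\le1$), the integrand defining $\Tb^{\g,\mu}\chi_{[\cos\t,1]}$ vanishes whenever $(y,s)\notin\cb((x,t),\tfrac\t2)$. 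It then suffices to bound, for $(y,s)\in\cb((x,t),\tfrac\t2)$, the triple integral
\begin{equation*}
 \mathrm{LH}=\int_{[-1,1]^3}\chi_{[\cos\f\t2,1]}\big(|\xi(x,t,y,s;u,v)|\big)\,(1-u^2)^{\mu-1}(1-v_1^2)^{\mu+\f{d-3}2}(1-v_2^2)^{\g-\f12}\,\d u\,\d v.
\end{equation*}
To decouple it I would invoke Cauchy--Schwarz for the vectors $(x,\sqrt{t^2-\|x\|^2})$ and $(y,\sqrt{s^2-\|y\|^2})$ in $\RR^{d+1}$, giving $\la x,y\ra+\sqrt{t^2-\|x\|^2}\sqrt{s^2-\|y\|^2}\le ts$ and hence $\rho(u):=\sqrt{\tfrac12(ts+\la x,y\ra+\sqrt{t^2-\|x\|^2}\sqrt{s^2-\|y\|^2}\,u)}\le\sqrt{ts}$ for all $u$. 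As on the surface, this yields $\chi_{[\cos\f\t2,1]}(|\xi|)\le\chi_{[\cos\f\t2,1]}(\eta_1)\,\chi_{[\cos\f\t2,1]}(\eta_2)$, where $\eta_1=|v_1|\rho(u)+\sqrt{1-t}\sqrt{1-s}$ keeps the coupled pair $(u,v_1)$ and $\eta_2=\sqrt{ts}+|v_2|\sqrt{1-t}\sqrt{1-s}$ isolates $v_2$, so that $\mathrm{LH}\le J\cdot I_2$ with $I_2$ the $v_2$-integral and $J$ the $(u,v_1)$-integral.

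The integral $I_2$ is exactly the one appearing on the conic surface and is bounded by $c\,\t^{2\g+1}(1-t+\t^2)^{-(\g+\f12)}$, using $\sd_{[0,1]}(t,s)\le\sd_\VV((x,t),(y,s))\le\tfrac\t2$ together with $|\sqrt{1-t}-\sqrt{1-s}|\le\sd_{[0,1]}(t,s)$. The genuinely new piece, which I expect to be the main obstacle, is the coupled integral $J$, whose nested radical $\rho(u)$ ties $u$ and $v_1$ together. The plan is to integrate in $v_1$ first: as in the surface estimate for $I_1$, with weight exponent $\mu+\tfrac{d-3}2$, this produces a factor comparable to $(\rho(u)-\beta)_+^{\,\mu+\f{d-1}2}$ (divided by a like power of $\rho(u)$), where $\beta:=\cos\tfrac\t2-\sqrt{1-t}\sqrt{1-s}$. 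One then integrates in $u$ against the weight $(1-u^2)^{\mu-1}$. Since $\rho(u)$ is increasing with $\rho(1)^2-\rho(u)^2=\tfrac12\sqrt{t^2-\|x\|^2}\sqrt{s^2-\|y\|^2}\,(1-u)$, the constraint $\rho(u)\ge\beta$ confines $u$ to a neighborhood of $1$ whose length is controlled by $\sqrt{t^2-\|x\|^2}\sqrt{s^2-\|y\|^2}$; integrating the singular weight over this neighborhood is what manufactures the factor $(t^2-\|x\|^2+\t^2)^{-\mu}$, while the $v_1$-integration delivers $(t+\t^2)^{-(d-1)/2}$ and the remaining powers of $\t$, in parallel with the surface bound for $I_1$. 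The delicate bookkeeping is to replace $\sqrt{t^2-\|x\|^2}\sqrt{s^2-\|y\|^2}$ by the single regularized quantity $t^2-\|x\|^2+\t^2$, which requires controlling $|\sqrt{t^2-\|x\|^2}-\sqrt{s^2-\|y\|^2}|$ on $\cb((x,t),\tfrac\t2)$ --- the cone analogue of the estimate $|\sqrt{1-t}-\sqrt{1-s}|\le\sd_{[0,1]}(t,s)$ used for $I_2$. Finally, the degenerate cases $\mu=0$, $d=1$, or $\g=-\tfrac12$, in which one of the weights $(1-u^2)^{\mu-1}$, $(1-v_1^2)^{\mu+\f{d-3}2}$, $(1-v_2^2)^{\g-\f12}$ is non-integrable and \eqref{eq:PbCone2} is read through the limit \eqref{eq:limitInt}, are treated separately and reduce to surface-type estimates already established, which are easier.
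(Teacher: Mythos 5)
Your reduction to the pointwise kernel estimate (your exponent $\t^{2\g+d+4\mu}$ agrees with the paper's $\t^{2\l-d}$, $\l=\g+d+2\mu$), your justification of the cutoff $\chi_{\cb((x,t),\t/2)}$ via $\cos\sd_\VV((x,t),(y,s))=\xi(x,t,y,s;1,\mathbf 1)$ and the monotonicity of the inner radical in $u$, and your treatment of the $v_2$-integral $I_2$ all coincide with the paper's argument, as does your identification of the bound $|\sqrt{t^2-\|x\|^2}-\sqrt{s^2-\|y\|^2}|\le\sd_\VV((x,t),(y,s))$ as the needed new metric input. You diverge at the one genuinely new step. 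The paper adds a third pointwise bound, $|\xi(x,t,y,s;u,v)|\le\xi(x,t,y,s;u,\mathbf 1)$, alongside the two bounds by $\eta(t,s;(|v_1|,1))$ and $\eta(t,s;(1,|v_2|))$; this decouples the triple integral completely into a product $I_1\times I_2\times I_3$ of \emph{one-dimensional} integrals, where $I_1$ (over $v_1$, with the $u$-dependent radical replaced by $\sqrt{ts}$) is estimated verbatim as on the surface and $I_3$ (over $u$ alone, at $v=\mathbf 1$) is computed by the explicit substitution $u\mapsto v=\xi(x,t,y,s;u,\mathbf 1)$, yielding $I_3\le c\,\t^{2\mu}(t^2-\|x\|^2+\t^2)^{-\mu}$ once the Lemma 5.4 inequality is used to compare $s^2-\|y\|^2$ with $t^2-\|x\|^2$ on $\cb((x,t),\t/2)$.

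You instead keep $u$ and $v_1$ coupled in a two-dimensional integral $J$, and this is where the proposal has a gap: the estimate for $J$ is asserted, not carried out, and the route you sketch has a concrete obstruction you do not address. Integrating in $v_1$ first produces a factor of the form $\rho(u)^{-(\mu+\f{d-1}{2})}\big(\rho(u)-\b\big)_+^{\mu+\f{d-1}{2}}$ with $\b=\cos\f{\t}{2}-\sqrt{1-t}\sqrt{1-s}$, and, unlike $\sqrt{ts}$ in the surface case, the quantity $\rho(u)$ is not bounded below by $c\sqrt{t+\t^2}$ uniformly in $u$: it degenerates as $u\to-1$ whenever $\la x,y\ra+\sqrt{t^2-\|x\|^2}\sqrt{s^2-\|y\|^2}$ is close to $-ts$. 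The negative power of $\rho(u)$ must therefore be controlled on the support of the indicator before the $u$-integration can produce the factor $(t^2-\|x\|^2+\t^2)^{-\mu}$, and none of that bookkeeping is done; the split of the exponent $\mu+\f{d-1}{2}$ between the $\t$-powers and the $u$-localization is also only described, not verified. The paper's extra decoupling bound costs nothing (it only enlarges the indicator) and removes the coupling entirely; you should adopt it rather than fight the two-dimensional integral.
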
 

\begin{proof}
The proof is parallel to that of Theorem \ref{thm:CM<M}. Let $\l = \g + d + 2 \mu$. Following the argument
of the conic surface, the proof is reduced to show 
\begin{equation} \label{eq:Tchi-bdV}
  \left| \Tb^{\g,\mu} \chi_{[\cos \t,1]}\big( (x,t), (y,s)\big) \right |\le 
    c \frac{\t^{2\l - d }  \chi_{\cb((x,t), \f{\t}2)}(y,s)} {(t+\t^2)^{\f{d-1}{2}} (1-t+\t^2)^{\g +\f12} (t^2-\|x\|^2+\t^2)^\mu}.
\end{equation}
We again prove only the non-degenerated case with $d > 1$, $\g > -\f12$ and $\mu > 0$. By the definition of 
$\Tb^{\g,\mu}$ and $\chi_{\cb ((x,t),\f{\t}{2})}(y,s) = \chi_{[\cos \f{\t}2,1]} \left(\xi(x,t,y,s; 1,\mathbf{1})\right)$, using
the argument in the proof of Theorem \ref{thm:CM<M}, we see that the main task for
proving \eqref{eq:Tchi-bdV} is to establish the inequality
\begin{align*}
 \mathrm{LH}:= \int_{[-1,1]^3} \chi_{[\cos \f\t 2,1]} & \left(|\xi(x,t,y,s;u,v)| \right) 
       (1-u^2)^{\mu-1} (1-v_1^2)^{\mu+\f{d-3}{2}} (1-v_2^2)^{\g-\f12} \d v \\
   & \qquad\qquad \le  c\frac{\t^{2\l-d}}  {(t+\t^2)^{\f{d-1}{2}} (1-t+\t^2)^{\g +\f12} (t^2-\|x\|^2+\t^2)^\mu}
\end{align*}
for $(y,s) \in \cb( (x,t), \tfrac \t 2)$. Using $|\la x,y \ra + \sqrt{t^2-\|x\|^2} \sqrt{s^2-\|y\|^2} |\le t s$, it is easy 
to see that 
$|\xi(x,t,y,s;u,v)|$ is bounded by both $\eta(t,s; (|v_1|,1))$ and $\eta(t,s; (1,|v_2|))$, where 
$\eta(t,s; v) = v_1 \sqrt{t s} + v_2 \sqrt{1-t}\sqrt{1-s}$. Moroever, by $|v_1| \le 1$ and $|v_2|\le 1$, 
$|\xi(x,t,y,s;u,v)|$ is also bounded by $\xi(x,t,y,s;u,\mathbf{1})$. Consequently, we obtain 
\begin{align*}
 \mathrm{LH} \,& \le c \int_{-1}^1 \chi_{[\cos \f\t 2,1]}\left(|v_1| \sqrt{t s} + \sqrt{1-t}\sqrt{1-s}\right)
       (1-v_1^2)^{\mu+\f{d-3}{2}} \d v_1 \\
       & \,\, \, \, \,  \times  \int_{-1}^1 \chi_{[\cos \f \t 2,1]}\left(\sqrt{t s} + |v_2|\sqrt{1-t}\sqrt{1-s}\right) (1-v_2^2)^{\g-\f12} \d v_2 \\
       & \,\, \, \,\,  \times  \int_{-1}^1 \chi_{[\cos \f \t 2,1]}\left(\xi(x,t,y,s;u,\mathbf{1})\right) (1-u^2)^{\mu-1} \d u \\
       & = I_1 \times I_2 \times I_3. 
\end{align*}
The first two terms, $I_1$ and $I_2$, can be estimated exactly as in Theorem \ref{thm:CM<M}, using
$$
  \big| \sqrt{t} - \sqrt{s} \big|\le \sd_{\VV} ((x,t), (y,s)) \quad \hbox{and} \quad 
      \big| \sqrt{1-t} - \sqrt{1-s} \big| \le \sd_{\VV} ((x,t), (y,s))
$$
given in \cite[Lemma 5.4]{X21}. Thus, it remains to prove 
$$
   I_3  \le c \frac{\t^{2\mu}} {\left(t^2-\|x\|^2+\t^2\right)^{\mu} }, \qquad (y,s) \in \cb( (x,t), \tfrac \t 2). 
$$
This inequality holds trivially if $t^2-\|x\|^2 \le 4 \t^2$. We assume $t^2 -\|x\|^2 \ge 4 \t^2$ below, so that
$t^2 -\|x\|^2 \ge (t^2 -\|x\|^2 + \t)/2$. Moroever, by \cite[Lemma 5.4]{X21}, 
$$
|\sqrt{t^2-\|x\|^2} - \sqrt{s^2-\|y\|^2}| \le \sd_\VV((x,t),(y,s)),
$$ 
it follows that, for $(y,s) \in \cb( (x,t), \tfrac \t 2)$, 
$$
s^2 - \|y\|^2 \ge \left(\sqrt{t^2-\|x\|^2} - \t /2\right)^2 \ge (t^2 -\|x\|^2 + \t)/4. 
$$
Let $\phi(t,s) = \sqrt{1-t}\sqrt{1-s}$. Since $\xi(x,t,y,s;1,\mathbf{1}) = \arccos \sd_\VV((x,t),(y,s))$, we can
make a change of variables $u \mapsto  v= \xi(x,t,y,s;u,\mathbf{1})$ in the integral of $I_3$ to obtain 
\begin{align*}
    I_3 \, & \le \int_{-1}^1  \chi_{[\cos \f \t 2,1]}\left(\xi(x,t,y,s;u,\mathbf{1})\right) (1-u)^{\mu-1} \d u \\
    & =  \int_{\xi(x,t,y,s;-1,\mathbf{1})}^{\xi(x,t,y,s;1,\mathbf{1})} \chi_{[\cos \f\t 2, 1]}(v)  
          \left (1- \frac{2 (v-\phi(t,s))^2 - (t s + \la x, y\ra)}{\sqrt{t^2-\|x\|^2}\sqrt{s^2-\|y\|^2}} \right)^{\mu-1} \\
       &   \qquad \qquad \qquad \qquad \qquad \qquad 
             \times  \frac{4(v-\phi(t,s))}{\sqrt{t^2-\|x\|^2}\sqrt{s^2-\|y\|^2}} \d v \\           
       & \le c \int_{\cos \f \t 2}^{\cos \sd_{\VV} ((x,t), (y,s))}
         \left( \big(\cos \sd_{\VV} ( (x,t), (y,s)) - \phi(t,s)\big)^2 - (v-\phi(t,s))^2 \right)^{\mu-1}   \\
           &   \qquad \qquad \qquad \qquad \qquad \qquad  \times  \frac{v-\phi(t,s)}{\left(t^2-\|x\|^2+\t^2\right)^{\mu} }\d v. 
\end{align*}
The last integral can be integrated out, so that 
\begin{align*}  
    I_3  & \le  c  \frac{1}{\left(t^2-\|x\|^2+\t^2\right)^{\mu} }
       \left( \big(\cos \sd_{\VV} ( (x,t), (y,s)) - \phi(t,s)\big)^2 - \big(\cos \tfrac \t 2 -\phi(t,s)\big)^2 \right)^\mu \\
    &  \le c  \frac{1}{\left(t^2-\|x\|^2+\t^2\right)^{\mu} }\left(\cos \sd_{\VV} ( (x,t), (y,s)) - \cos \tfrac \t 2 \right)^\mu  \\
    &  \le c  \frac{\t^{2\mu}}{ \left(t^2-\|x\|^2+\t^2\right)^{\mu}}.
\end{align*} 
This establishes the desired estimate for $I_3$ and completes the proof.  
\end{proof}

As in the case of conic surface, the $\bCM_{\g,\mu}$ inherits the properties of the Hardy-Littlewood 
maximal function $\Mb_{\g,\mu}$. 

\begin{cor}
Let $\d \ge 1$, $\g \ge -\f12$ and $\mu \ge 0$. For $f \in L^p(\VV^d, W_{\g,\mu})$, $1 < p < \infty$, 
$$
  \|\bCM_{\g,\mu} f\|_{p,W_{\g,\mu}} \le c_p \| f\|_{p,W_{\g,\mu}}, \qquad 1 < p < \infty,
$$ 
and, for $f \in L^1(\VV^d, W_{\g,\mu})$ and $\a > 0$, 
$$
  W_{\g,\mu} \left\{(x,t) \in \VV^{d+1}: \bCM_{\g,\mu} f(x,t) > \a \right\} \le c \frac{\|f\|_{p,W_{\g,\mu}}}{\alpha}.
$$
\end{cor}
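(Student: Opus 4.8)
The plan is to deduce both assertions directly from Theorem~\ref{thm:CM<M-V} together with the classical mapping properties of the Hardy--Littlewood maximal operator on a space of homogeneous type; essentially all the analytic work has already been done, and what remains is bookkeeping. First I would invoke Theorem~\ref{thm:CM<M-V} to record the pointwise domination $\bCM_{\g,\mu} f(x,t) \le c\, \Mb_{\g,\mu} f(x,t)$, valid for every $(x,t)\in\VV^{d+1}$. This reduces the corollary to proving the corresponding strong $(p,p)$ and weak $(1,1)$ bounds for $\Mb_{\g,\mu}$ alone, since the pointwise inequality transfers both norm and distributional estimates: in particular the superlevel set $\{(x,t): \bCM_{\g,\mu} f(x,t) > \a\}$ is contained in $\{(x,t): \Mb_{\g,\mu} f(x,t) > \a/c\}$, so a weak type bound for $\Mb_{\g,\mu}$ yields one for $\bCM_{\g,\mu}$ with an adjusted constant.

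Next I would confirm that $(\VV^{d+1}, W_{\g,\mu}, \sd_{\VV})$ is genuinely a homogeneous space in the sense fixed at the start of Section~2: the balls $\cb((x,t),r)$ are measurable and, by the estimate \eqref{eq:W-doubling}, the measure $W_{\g,\mu}$ is doubling with respect to $\sd_{\VV}$ with a doubling constant depending only on $d,\g,\mu$. With this structural fact in hand, the standard Hardy--Littlewood maximal theorem on spaces of homogeneous type applies without change: a Vitali-type covering argument gives the weak type $(1,1)$ inequality $W_{\g,\mu}\{(x,t): \Mb_{\g,\mu} f(x,t) > \a\} \le c\,\a^{-1}\|f\|_{1,W_{\g,\mu}}$, and interpolation (via the Marcinkiewicz theorem) between this and the trivial bound $\Mb_{\g,\mu} f \le \|f\|_{\infty}$ yields $\|\Mb_{\g,\mu} f\|_{p,W_{\g,\mu}} \le c_p\|f\|_{p,W_{\g,\mu}}$ for $1<p<\infty$. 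A convenient reference is the treatment in \cite{Stein} on which the paper already relies.

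Combining the two bounds for $\Mb_{\g,\mu}$ with the pointwise domination from the first step then gives the stated $L^p$ inequality for $\bCM_{\g,\mu}$ and its weak type estimate; I would note that the numerator in the weak type inequality should read $\|f\|_{1,W_{\g,\mu}}$, consistent with the hypothesis $f\in L^1(\VV^{d+1},W_{\g,\mu})$. I do not anticipate any genuine obstacle in this corollary: the one substantive ingredient, namely the kernel bound \eqref{eq:Tchi-bdV} controlling $\Tb^{\g,\mu}\chi_{[\cos\t,1]}$ by a multiple of the normalized characteristic function of the ball $\cb((x,t),\tfrac{\t}{2})$, has already been established in the proof of Theorem~\ref{thm:CM<M-V}, and the remaining argument is purely the abstract maximal function theory on homogeneous spaces, which uses only the doubling property \eqref{eq:W-doubling}. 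The same remark applies mutatis mutandis to the analogous corollary on the conic surface.
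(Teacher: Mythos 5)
Your proposal is correct and follows essentially the same route as the paper, which likewise deduces the corollary immediately from the pointwise bound $\bCM_{\g,\mu} f \le c\, \Mb_{\g,\mu} f$ of Theorem \ref{thm:CM<M-V} together with the standard strong $(p,p)$ and weak $(1,1)$ estimates for the Hardy--Littlewood maximal operator on the homogeneous space $(\VV^{d+1}, W_{\g,\mu}, \sd_{\VV})$, whose doubling property is \eqref{eq:W-doubling}. Your observation that the numerator in the weak type inequality should read $\|f\|_{1,W_{\g,\mu}}$ is also correct; the $p$ there is a typographical slip in the statement.
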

 
\subsection{Multiplier theorem}
The Ces\`aro $(C,\delta)$ operator with respect to $W_{\g,\mu}$ and its maximal $(C,\delta)$ operator 
are denoted by  
$$
    S_*^\delta (W_{\g,\mu}; f, (x,t)) = \sup_{n \ge 0}  \left|S_n^\delta (W_{\g,\mu}; f, (x,t))\right|, \quad (x,t) \in \VV^{d+1}.
$$
We show that the Assertion 1 holds with $\delta_0 = \g+d+2\mu$, which is sharper than the one follows from
\eqref{eq:S*<CM} and Theorem \ref{thm:CM<M-V}. 

\begin{thm}
Let $d\ge 1$, $\g \ge -\f12$ and $\mu \ge 0$. If $\delta > \g+d + 2\mu$ and $f\in L^1(\VV^{d+1}, W_{\g,\mu})$, then 
for every $(x,t) \in \VV^{d+1}$, 
$$
    S_*^\delta (W_{\g,\mu}; f, (x,t))  \le c\, \bCM_{\g,\mu} f(x,t). 
$$
\end{thm}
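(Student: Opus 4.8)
The plan is to follow the proof of Theorem~\ref{thm:AssertionV0} for the conic surface almost verbatim, the key observation being that the addition formula \eqref{eq:PbCone2} for $W_{\g,\mu}$ has $\a=\g+d+2\mu-\f12=\l-\f12$ and $\b=-\f12$, which is precisely the same $(\a,\b)$-type as on the conic surface; only the value of $\l$ changes, now $\l=\g+d+2\mu$. By \eqref{eq:proj=f*g} and \eqref{eq:(C,delta)} one writes $S_n^\delta(W_{\g,\mu};f)=f*_{\g,\mu}k_n^{(\l-\f12,-\f12),\delta}$, so the whole estimate reduces to a one-dimensional bound on the Jacobi--Ces\`aro kernel combined with an application of Theorem~\ref{thm:f*g<M}.

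First I would dispose of the easy range. By Theorem~\ref{thm:weak(C,d)bound}, the inequality \eqref{eq:S*<CM} already gives $S_*^\delta(W_{\g,\mu};f)\le c\,\bCM_{\g,\mu}f$ whenever $\delta\ge\a+\b+2=\l+1$, so it remains only to treat $\l<\delta<\l+1$. For this range I would invoke the Szeg\H{o} estimates \cite[(9.4.4) and (9.41.14)]{Sz} for the $(C,\delta)$ kernel of the Fourier--Jacobi series, which depend only on the pair $(\a,\b)=(\l-\f12,-\f12)$ and hence carry over unchanged:
$$
\left|k_n^\delta\big(w_{\l-\f12,-\f12};\cos\t,1\big)\right|\le c\,n^{\l-\delta}\left[(n^{-1}+\t)^{-(\delta+\l+\f12)}+1\right]=:k_n(\t).
$$
The majorant $k_n(\t)$ is continuous, nonnegative and decreasing on $[0,\pi]$, so it meets the hypotheses of Theorem~\ref{thm:f*g<M}. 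Because $\Tb^{\g,\mu}$ is built from a probability measure and is therefore positive and monotone, one has $|f*_{\g,\mu}k_n^\delta|\le|f|*_{\g,\mu}\tilde k_n$ for the nonnegative decreasing function $\tilde k_n(u):=k_n(\arccos u)$, and Theorem~\ref{thm:f*g<M} then bounds the latter by $c_n\,\bCM_{\g,\mu}f$ with $c_n=\int_0^\pi k_n(\phi)\,w_{\l-\f12,-\f12}(\cos\phi)\sin\phi\,\d\phi$, exactly as on the conic surface.

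The only computation that must be redone with the new value $\l=\g+d+2\mu$ is the verification that $c_n$ is bounded uniformly in $n$. Using the elementary identity $w_{\l-\f12,-\f12}(\cos\phi)\sin\phi=2^\l(\sin\f\phi2)^{2\l}$ together with $(\sin\f\phi2)^{2\l}\sim\phi^{2\l}$, I would split the integral at $\phi=n^{-1}$: the constant term and the piece over $\phi\in[0,n^{-1}]$ contribute $O(n^{-1/2})$, while on $[n^{-1},\pi]$ the bound $(n^{-1}+\phi)^{-(\delta+\l+\f12)}\le\phi^{-(\delta+\l+\f12)}$ reduces matters to $n^{\l-\delta}\int_{n^{-1}}^\pi\phi^{\l-\delta-\f12}\,\d\phi$, which is $O(1)$ throughout $\l<\delta<\l+1$ (the case $\delta\ge\l+\f12$ again yielding $O(n^{-1/2})$). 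Taking the supremum over $n\ge0$ then gives the claim. I do not expect a genuine obstacle: the argument is structurally identical to Theorem~\ref{thm:AssertionV0}, and the only real bookkeeping is keeping the normalizing exponent $2\l$ and the Szeg\H{o} exponents consistent with $\l=\g+d+2\mu$.
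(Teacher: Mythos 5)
Your proposal is correct and coincides with the paper's approach: the paper's proof of this theorem consists precisely of the remark that it follows verbatim from the conic-surface case (Theorem~\ref{thm:AssertionV0}) with $\l=\g+d+2\mu$ in place of $\g+d-1$, which is exactly the argument you carry out. Your verification that $\int_0^\pi k_n(\phi)(\sin\f{\phi}{2})^{2\l}\,\d\phi$ stays bounded uniformly in $n$ is the only computation the paper leaves implicit, and your bookkeeping there is sound.
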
 

The proof of this theorem follows exactly as in that of Theorem \ref{thm:AssertionV0}. Consequently, by
Theorem \ref{thm:multiplier}, we obtain the following multiplier theorem on the cone. 

\begin{thm}\label{thm:multiplierV} 
Let $d\ge 1$, $\g \ge -\f12$ and $\mu \ge 0$.  Let $\{\mu_j\}_{j=0}^\infty$ be a bounded sequence of real 
numbers such that 
$$
  \sup_{j \ge 0} 2^{j(k-1)} \sum_{\ell =2^j +1}^{2^{j+1}} \left|\triangle^k \mu_{\ell}\right| \le M < \infty
$$
for $k \ge \lfloor d+\g + 2\mu \rfloor$. Then $\{\mu_j\}$ defines an $L^p(\VV, W_{\g,\mu})$ multiplier for 
$1 < p < \infty$; that is
$$
   \left \| \sum_{k =0}^\infty \mu_k \proj_k (W_{\g,\mu}; f)\right \|_{p,W_{\g,\mu}} \le c  \|f\|_{p,W_{\g,\mu}}, 
         \quad 1 < p < \infty, 
$$
where $c$ is independent of $\mu_j$ and $f$. 
\end{thm}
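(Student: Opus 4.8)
The plan is to obtain Theorem~\ref{thm:multiplierV} as a direct instance of the general multiplier theorem, Theorem~\ref{thm:multiplier}, so that essentially no new argument is needed beyond assembling the ingredients already established in this section. Recall that Theorem~\ref{thm:multiplier} has exactly two hypotheses: that $(\Omega,\sw)$ admit the addition formula \eqref{eq:7Pn}, and that Assertion~1 hold. First I would record that the cone meets the first hypothesis: Theorem~\ref{thm:PnCone2} writes $\Pb_n(W_{\g,\mu};\cdot,\cdot)$ in the form \eqref{eq:7Pn} with Jacobi parameters $\a = \g+d+2\mu-\f12$ and $\b = -\f12$, the probability measure being the normalization of $(1-u^2)^{\mu-1}(1-v_1^2)^{\mu+\f{d-3}2}(1-v_2^2)^{\g-\f12}$ on $[-1,1]^3$ with $\xi$ given by \eqref{eq:xi}. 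Since $\g \ge -\f12$, $d \ge 1$ and $\mu \ge 0$ force $\a \ge \b$, the index of the general setting is $\delta_0 = \max\{\a,\b\}+\f12 = \g+d+2\mu$.

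The second, and only substantive, point is to verify Assertion~1 for $(\VV^{d+1}, W_{\g,\mu}, \sd_\VV)$, namely that the maximal Ces\`aro operator is controlled by the Hardy-Littlewood maximal function $\Mb_{\g,\mu}$ for every $\delta > \delta_0$. This I would obtain by chaining the two preceding results: the theorem immediately above gives $S_*^\delta(W_{\g,\mu};f) \le c\,\bCM_{\g,\mu} f$ for $\delta > \g+d+2\mu$, while Theorem~\ref{thm:CM<M-V} gives $\bCM_{\g,\mu} f \le c\,\Mb_{\g,\mu} f$. Composing the two yields $S_*^\delta(W_{\g,\mu};f) \le c\,\Mb_{\g,\mu} f$ for all $\delta > \delta_0$, which is precisely Assertion~1 with the index $\delta_0 = \g+d+2\mu$.

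With both hypotheses in hand, I would invoke Theorem~\ref{thm:multiplier} with $\delta_0 = \g+d+2\mu$: a bounded sequence $\{\mu_j\}$ satisfying the Marcinkiewicz-type difference condition, for $k$ at least the smoothness order demanded by that theorem, then defines an $L^p(\VV^{d+1},W_{\g,\mu})$ multiplier for $1 < p < \infty$, which is the assertion. I do not expect any genuine obstacle at this stage: all the analytic difficulty has already been spent, first in the sharp Ces\`aro estimate and, above all, in Theorem~\ref{thm:CM<M-V}, whose proof reduced the pointwise bound on $\Tb^{\g,\mu}\chi_{[\cos\t,1]}$ to the three one-dimensional integrals $I_1,I_2,I_3$ and leaned on the comparison of $\sd_\VV$ with the coordinatewise distances $|\sqrt{t}-\sqrt{s}|$, $|\sqrt{1-t}-\sqrt{1-s}|$ and $|\sqrt{t^2-\|x\|^2}-\sqrt{s^2-\|y\|^2}|$. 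The only item needing care is the bookkeeping that ties the index $\delta_0=\g+d+2\mu$ appearing in Assertion~1 to the summation-by-parts count $\lfloor\delta_0+1\rfloor$ used inside the proof of Theorem~\ref{thm:multiplier}, so that the condition on $k$ is inherited directly from that theorem with no further adjustment.
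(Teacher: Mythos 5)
Your proposal matches the paper's own proof: the paper likewise obtains Theorem~\ref{thm:multiplierV} by combining the sharp Ces\`aro estimate $S_*^\delta(W_{\g,\mu};f)\le c\,\bCM_{\g,\mu}f$ for $\delta>\g+d+2\mu$ with Theorem~\ref{thm:CM<M-V} to verify Assertion~1 with $\delta_0=\g+d+2\mu$, and then invokes Theorem~\ref{thm:multiplier}. The only caveat, which lies in the paper's statement rather than in your argument, is that a literal application of Theorem~\ref{thm:multiplier} yields the condition $k\ge\lfloor\delta_0+1\rfloor=\lfloor d+\g+2\mu\rfloor+1$, one more than the $\lfloor d+\g+2\mu\rfloor$ stated in Theorem~\ref{thm:multiplierV}.
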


\end{document}